\newtheorem{theorem}{Theorem}[section]
\newtheorem{lemma}[theorem]{Lemma}
\newtheorem{proposition}[theorem]{Proposition}
\newtheorem{corollary}[theorem]{Corollary}
\theoremstyle{definition}
\newtheorem{definition}[theorem]{Definition}
\newtheorem{example}[theorem]{Example}
\theoremstyle{remark}
\newtheorem{remark}[theorem]{Remark}
\newcommand{\be}{\begin{equation}}
\newcommand{\ee}{\end{equation}}
\newcommand{\al}{\alpha}
\newcommand{\Om}{\Omega}
\newcommand{\om}{\omega}
\newcommand{\la}{\lambda}
\newcommand{\dz}{\wedge}
\newcommand{\ba}{\begin{array}}
\newcommand{\ea}{\end{array}}
\newcommand{\beq}{\begin{eqnarray}}
\newcommand{\eeq}{\end{eqnarray}}
\newtheorem{lm}{lemma}
\newtheorem{thee}{theorem}
\newtheorem{proo}{proposition}
\newtheorem{co}{corollary}
\newtheorem{rem}{remark}
\newtheorem{deff}{definition}
\newcommand{\bd}{\begin{deff}}
\newcommand{\ed}{\end{deff}}
\newcommand{\bl}{\begin{lm}}
\newcommand{\el}{\end{lm}}
\newcommand{\bp}{\begin{proo}}
\newcommand{\ep}{\end{proo}}
\newcommand{\bt}{\begin{thee}}
\newcommand{\et}{\end{thee}}
\newcommand{\bc}{\begin{co}}
\newcommand{\ec}{\end{co}}
\newcommand{\brm}{\begin{rem}}
\newcommand{\erm}{\end{rem}}
\newcommand{\der}{{\rm d}}
\def\frak{\mathfrak}
\newcommand{\newc}{\newcommand}
\let\ccdot\cdot
\def\cdot{\hbox to 2.5pt{\hss$\ccdot$\hss}}
\newc{\aR}{\mbox{\boldmath{$ R$}}}
\newc{\aS}{\mbox{\boldmath{$ S$}}}
\newc{\aT}{\mbox{\boldmath{$ T$}}}
\newc{\aW}{\mbox{\boldmath{$ W$}}}
\newc{\aK}{\mbox{\boldmath{$ K$}}}
\newc{\aL}{\mbox{\boldmath{$ L$}}}
\newcommand{\hook}{\raisebox{-0.35ex}{\makebox[0.6em][r]
{\scriptsize $-$}}\hspace{-0.15em}\raisebox{0.25ex}{\makebox[0.4em][l]{\tiny
 $|$}}}
\newcommand{\bma}{\begin{pmatrix}}
\newcommand{\ema}{\end{pmatrix}}
\newcommand{\bet}{\beta}
\newc{\obstrn}[2]{B^{#1}_{#2}}
\newcommand{\rpl}                         
{\mbox{$
\begin{picture}(12.7,8)(-.5,-1)
\put(0,0.2){$+$}
\put(4.2,2.8){\oval(8,8)[r]}
\end{picture}$}}
\newcommand{\lpl}                         
{\mbox{$
\begin{picture}(12.7,8)(-.5,-1)
\put(2,0.2){$+$}
\put(6.2,2.8){\oval(8,8)[l]}
\end{picture}$}}
\newc{\tensor}[1]{#1}
\newc{\Mvariable}[1]{\mbox{#1}}
\newc{\down}[1]{{}_{#1}}
\newc{\up}[1]{{}^{#1}}
\newc{\JulyStrut}{\rule{0mm}{6mm}}
\newc{\midtenPan}{\mbox{\sf S}}
\newc{\midten}{\mbox{\sf T}}
\newc{\midtenEi}{\mbox{\sf U}}
\newc{\ATen}{\mbox{\sf E}}
\newc{\BTen}{\mbox{\sf F}}
\newc{\CTen}{\mbox{\sf G}}
\def\sideremark#1{\ifvmode\leavevmode\fi\vadjust{\vbox to0pt{\vss
 \hbox to 0pt{\hskip\hsize\hskip1em
 \vbox{\hsize3cm\tiny\raggedright\pretolerance10000
 \noindent #1\hfill}\hss}\vbox to8pt{\vfil}\vss}}}%
\newcommand{\Span}{\mathrm{Span}}
\numberwithin{equation}{section}
\newcounter{romenumi}
\newcommand{\labelromenumi}{(\roman{romenumi})}
\begin{document}
\title{Differential equations and para-CR structures}
\vskip 1.truecm
\author{C. Denson Hill} \address{Department of Mathematics, Stony
  Brook University, Stony Brook, N.Y. 11794, USA}
\email{dhill@math.sunysb.edu}  
\author{Pawe\l~ Nurowski} \address{Instytut Fizyki Teoretycznej,
Uniwersytet Warszawski, ul. Hoza 69, Warszawa, Poland}
\email{nurowski@fuw.edu.pl}

\date{\today}
\maketitle
\tableofcontents
\newcommand{\bbS}{\mathbb{S}}
\newcommand{\bbR}{\mathbb{R}}
\newcommand{\sog}{\mathbf{SO}}
\newcommand{\cog}{\mathbf{CO}}
\newcommand{\slg}{\mathbf{SL}}
\newcommand{\og}{\mathbf{O}}
\newcommand{\soa}{\frak{so}}
\newcommand{\coa}{\frak{co}}
\newcommand{\oa}{\frak{o}}
\newcommand{\sla}{\frak{sl}}
\newcommand{\sua}{\frak{su}}
\newcommand{\dr}{\mathrm{d}}
\newcommand{\sug}{\mathbf{SU}}
\newcommand{\gat}{\tilde{\gamma}}
\newcommand{\Gat}{\tilde{\Gamma}}
\newcommand{\thet}{\tilde{\theta}}
\newcommand{\Thet}{\tilde{T}}
\newcommand{\rt}{\tilde{r}}
\newcommand{\st}{\sqrt{3}}
\newcommand{\kat}{\tilde{\kappa}}
\newcommand{\kz}{{K^{{~}^{\hskip-3.1mm\circ}}}}
\newcommand{\bv}{{\bf v}}
\newcommand{\di}{{\rm div}}
\newcommand{\curl}{{\rm curl}}
\newcommand{\cs}{(M,{\rm T}^{1,0})}
\newcommand{\tn}{{\mathcal N}}
\section{Introduction}\label{intor}
   Aldo Andreotti liked simple ideas best. He often said "The
more simple an idea is, the better it is". He also liked explicit
provocative examples which begged for the development of a new
general theory. We think he would have enjoyed hearing the 
story we tell here.

   A para-CR structure is the real analogue of a CR structure (see
   Definition \ref{pcr}). The
main point is that $K^2 = I$, instead of $J^2 = -I$, and one does not
insist that $\dim H^+ = \dim H^-$, as in the situation of CR structures
(where $\dim H^{1,0} = \dim H^{0,1}$ happens accidentally). Here $H^\pm$
are the $\pm 1$ eigenspaces of $K$. Assuming that one is already familiar
with CR structures, then here is the simple idea: "Change the
sign and allow the dimensions of the eigenspaces to differ".

   What are the provocative examples? One of the goals of this
paper is to provide a few of them.

Rather than overburden this introduction with a lengthy
description of what is contained here, we refer the reader
to the detailed table of contents. If we were to highlight 
the Sections of the paper that in our opinion are the most
interesting, we 
would indicate Sections \ref{buur} and 
\ref{buru}. 
\section{To para-CR structures via ODEs}
\subsection{Geometry of general solutions of ODEs modulo point transformations}\label{s1}
The abstract notion of a \emph{para-CR manifold} \cite{alex1} appears 
naturally in the context of systems of differential equations considered 
modulo point transformations of variables \cite{nurdif,nurspar}. In the simplest case of a single 
ordinary differential equation of $n$th order, 
\be y^{(n)}=F(x,y,y',...,y^{(n-1)}),\label{ode}\ee
for a real function $\bbR\ni x\mapsto y(x)\in\bbR$, such an equation has a \emph{general solution}
\be y=\psi(x,a_0,a_1,...,a_{n-1}),\label{sol}\ee
depending on $n$ arbitrary real parameters
$(a_0,a_1,...,a_{n-1})$. Thus the general solution of such an equation may be considered as a \emph{hypersurface} $\Sigma$ in $\bbR^2\times \bbR^n$ defined by
\be 
\Sigma=\{\bbR^2\times\bbR^n\ni(y,x,a_0,a_1,...,a_{n-1})~|~ \Psi(y,x,a_0,a_1,...,a_{n-1})=0\},\label{set}\ee
where
$\Psi(y,x,a_0,a_1,...,a_{n-1})=y-\psi(x,a_0,a_1,...,a_{n-1})$. Now
consider a diffeomorphism of $\bbR^2\times\bbR^n$, which preserves the
split of $\bbR^{(2+n)}$ onto $\bbR^2$ and $\bbR^n$. This may mix the
variables $y$ and $x$, and, \emph{separately}, may mix the variables $a_0$, $a_1$, ...,$a_{n-1}$; 
it cannot however mix $y$ and $x$ with the $a_i$s. Explicitly it is given 
by  
$$\bbR^2\times\bbR^n\ni (y,x,a_0,a_1,...,a_{n-1})\mapsto(\bar{y},\bar{x},\bar{a}_0,\bar{a}_1,...,\bar{a}_{n-1})\in\bbR^2\times\bbR^n,$$
where
\begin{eqnarray}
&&\bar{y}=\bar{y}(y,x),\nonumber\\&& \bar{x}=\bar{x}(y,x),\label{var}\\
&&\bar{a}_i=\bar{a}_i(a_0,a_1,...,a_{n-1}),\quad\quad i=0,1,...,n-1.\nonumber
\end{eqnarray} 
This diffeomorphism transforms $\Sigma$ to another hypersurface in $\bbR^2\times\bbR^n$, which defines the general solution to an ODE which is locally 
\emph{point equivalent} to the ODE (\ref{ode}).

To understand the geometry of general solutions of such ODEs (\ref{ode}) modulo point transformations better, it is convenient to pass to a bit more general setting. Thus, without referring to any ODE, we consider $\bbR^{(2+n)}$ equipped with a \emph{linear} operator 
$$\kappa:\bbR^{(2+n)}\to\bbR^{(2+n)},\quad\quad{\rm such~that}\quad\quad 
\kappa^2={\rm id}.$$ 
The operator $\kappa$ has two eigenvalues: $+1$ and $-1$, and we \emph{assume} that the corresponding eigenspaces are, respectively, $\chi_+=\bbR^2$, with  
eigenvalue $+1$, and $\chi_-=\bbR^n$, with eigenvalue $-1$. We adapt a coordinate system $(y,x,a_0,...,a_{n-1})$ in $\bbR^{(2+n)}$, so 
that $\chi_+={\rm Span}(\partial_y,\partial_x)$ and $\chi_-={\rm Span}(\partial_{a_0},...,\partial_{a_{n-1}})$.

Given $\bbR^{(2+n)}$ with such a $\kappa$, we consider a smooth real function 
$$\Psi:\bbR^{(2+n)}\to\bbR.$$ This function is supposed to have
\emph{zero} as a  
\emph{regular value}. With this assumption the set $\Sigma$ as in
(\ref{set}) is a codimension one submanifold of $\bbR^{(2+n)}$. In
addition we assume that $\Sigma$ is \emph{generically} embedded, 
which means that its tangent space at each point, ${\rm T}_p\Sigma$, 
is spanned by the \emph{linearly independent} vectors 
\begin{eqnarray*}
X_1&=&\Psi_x\partial_y-\Psi_y\partial_x\\
Y_1&=&\Psi_1\partial_0-\Psi_0\partial_1\\
Y_2&=&\Psi_2\partial_1-\Psi_1\partial_2\\
\phantom{}&\phantom{}&\dots\\
Y_{n-1}&=&\Psi_{n-1}\partial_{n-2}-\Psi_{n-2}\partial_{n-1}\\
Z&=&\Psi_0\partial_y-\Psi_y\partial_0.
\end{eqnarray*}
Here $\partial_i=\tfrac{\partial}{\partial a_i}$, $i=0,\dots,(n-1)$, and $\Psi_x=\partial_x(\Psi)$, $\Psi_y=\partial_y(\Psi)$, $\Psi_i=\partial_i(\Psi)$.

Note that the operator $\kappa$ from the ambient space $\bbR^{(2+n)}$ defines a vector subspace $H_p$ of ${\rm T}_p\Sigma$ by
$$H_p=\kappa({\rm T}_p\Sigma)\cap {\rm T}_p\Sigma.$$
In the above basis of ${\rm T}_p\Sigma$ we have
$$H_p={\rm Span}(X_1,Y_1,\dots,Y_{n-1}).$$
Moreover, $\kappa$ restricts to $H_p$, defining an operator 
$K_p:H_p\to H_p$, $K_p=\kappa_{|H_p}$.  Since $K_p^2=id$, it splits $H_p$ onto $H_p=H^+_p\oplus H^-_p$; the spaces $H^\pm_p$ correspond to the $\pm$ eigenvalues of $K_p$. We have
$$H^+_p={\rm Span}(X_1),\quad\quad H^-_p={\rm Span}(Y_1,\dots,Y_{n-1}).$$
It further follows that the distributions
$H^+=\bigcup_{p\in\Sigma}H^+_p$ and $H^-=\bigcup_{p\in\Sigma}H^-_p$
are \emph{integrable}. They define \emph{two foliations} on $\Sigma$,
one of which has 1-dimensional leaves tangent to $X_1$, and the other
has $(n-1)$-dimensional leaves tangent to all the $Y_i$s. 
These two foliations are obtained by the intersections of $\Sigma$ with the leaves $\pi^{-1}_\pm(v_\mp)$, $v_{\mp}\in\chi_\mp$, of the respective foliations $\pi_+:\bbR^{(2+n)}\to\chi_-$ and $\pi_-:\bbR^{(2+n)}\to \chi_+$.   

Note also that although both distributions $H^+$ and $H^-$ are
automatically integrable, the distribution $H$ is \emph{in general
  not} integrable. For $H$ to be integrable the defining function
$\Psi$ would have to satisfy the $\tfrac{n(n-1)}{2}$ conditions:
$$\Psi_y\Psi_{x[i}\Psi_{j]}-\Psi_x\Psi_{y[i}\Psi_{j]}=0,$$
for all $i,j=0,1,\dots,(n-1)$. Here $\Psi_{yi}=\tfrac{\partial^2\Psi}{\partial y\partial a_i}$, and $\Psi_{x[i}\Psi_{j]}=\tfrac12 (\Psi_{xi}\Psi_j-\Psi_{xj}\Psi_i)$, etc. 

\subsection{Abstract para-CR manifolds} 
The structure on $\Sigma$ consisting of $K$ and $H=H^+\oplus H^-$ is
precisely 
the structure of a \emph{para-CR manifold}, which abstractly can be
defined, somwhat more generally, as follows:
\begin{definition}\label{pcr}
A $(k+n)$-dimensional manifold $M$ equipped 
with an $n$-dimensional distribution $H$ together with a linear operator $K:H\to H$, such that $K^2=id$, is called an \emph{almost para-CR manifold}. If in addition \emph{both} eigenspaces of $K$, $H^+=\{X\in H, KX=X\}$ and $H^-=\{X\in H, KX=-X\}$, \emph{are integrable}, $[H^\pm,H^\pm]\subset H^\pm$, then an almost para-CR manifold 
$(M,H,K)$ is called an abstract \emph{para-CR manifold}. 
The \emph{type} of the abstract para-CR manifold will be denoted
by $(k,r,s)$ where $k$ is the para-CR codimension, and $r=\dim H^+$,
$s=\dim H^-$
\end{definition}

In the following we will only consider \emph{smooth} para-CR structures, i.e. 
smooth manifolds $M$, with both $H$ and $K$ being smooth.

In the case of the hypersurfaces $\Sigma$ considered above,  $\Sigma$
has type $(1,1,n-1)$. What is more important, in this case the para-CR
structure $(H,K)$ was \emph{induced on} $\Sigma$ \emph{from the
  ambient space} $(\bbR^{(2+n)},\kappa)$. A natural question arises if
an abstractly defined para-CR manifold $(M,K,H)$, as in Definition 
\ref{pcr}, can be (locally) generically embedded as a submanifold $\Sigma$ in 
some $\bbR^{(m+n)}$ equipped with a linear operator 
$\kappa:\bbR^{(m+n)}\to\bbR^{(m+n)}$, $\kappa^2=id$, 
having $\bbR^m$ as its $+1$ eigenspace, and $\bbR^n$ as its $-1$ 
eigenspace, so that the induced para-CR structure on 
$\Sigma$ coincides with that of $(M,K,H)$.

To answer this question we need some preparations.

\begin{definition}\label{dpcrr}
Two abstract para-CR structures $(M_1,H_1,K_1)$ and $(M_2,H_2,K_2)$ 
are (locally) equivalent iff there exists a (local) diffeomorphism 
$\Phi:M_1\to M_2$ such that $\Phi_* H_1=H_2$ and $\Phi_*\circ
K_1=K_2\circ\Phi_*$. Such a $\Phi$ is called a \emph{para-CR diffeomorphism}.
\end{definition}

A \emph{dual formulation} of the para-CR definition is very useful:
\begin{definition}\label{dpcr}
An almost para-CR structure (of type $(k,r,s)$) is a $(k+n)$-dimensio\-nal manifold $M$
equipped with an equivalence class of $(k+r+s)$ one-forms 
$(\lambda_1,\dots,\lambda_k,$ $\mu_1,\dots,\mu_r,\nu_1,\dots,\nu_s)$ such that
\begin{itemize}
\item $r+s=n$,
\item $\lambda_1\dz\dots\lambda_k\dz\mu_1\dots\mu_r\dz\nu_1\dots\nu_s\neq 0$ at each point of $M$,
\item two choices of 1-forms
  $(\lambda_1,\dots,\lambda_k,\mu_1,\dots,\mu_r,\nu_1,\dots,\nu_s)$
  and $(\lambda'_1,\dots,\lambda'_k,$
  $\mu'_1,\dots,\mu'_r,\nu'_1,\dots,\nu'_s)$ are in an equivalence relation iff there exist real functions $a^i_{~j}$, $b^j_{~A}$, $c^j_{~\alpha}$, $f^A_{~B}$, $h^\alpha_{\beta}$, 
with $i,j=1,\dots k$; $A,B=1,\dots, r$; $\alpha,\beta=1,\dots, s$, on $M$ such that:
\be\lambda'_i=a^j_{~i}\lambda_j,\quad\mu'_A=f^B_{~A}\mu_B+b^j_{~A}\lambda_j,\quad
\nu'_\alpha=h^\beta_{~\alpha}\nu_\beta+c^j_{~\alpha}\lambda_j,\label{pt}\ee
and ${\rm det}(a^i_{~j}){\rm det}(f^A_{~B}){\rm det}(h^\alpha_{~\beta})\neq 0$. 
\end{itemize} 
An almost para-CR structure is an \emph{integrable} para-CR structure iff, in addition, the following equations
\begin{eqnarray}
&&\der\lambda_i\dz\lambda_1\dz\dots\dz\lambda_k\dz\mu_1\dz\dots\dz\mu_r=0\nonumber\\
&&\der\mu_A\dz\lambda_1\dz\dots\dz\lambda_k\dz\mu_1\dz\dots\dz\mu_r=0\label{integ1}
\end{eqnarray}
and 
\begin{eqnarray}
&&\der\lambda_i\dz\lambda_1\dz\dots\dz\lambda_k\dz\nu_1\dz\dots\dz\nu_s=0\nonumber\\
&&\der\nu_\alpha\dz\lambda_1\dz\dots\dz\lambda_k\dz\nu_1\dz\dots\dz\nu_s=0\label{integ2}
\end{eqnarray}
are simultaneously satisfied, for all $i=1,\dots,k$, 
$A=1,\dots,r$, $\alpha=1,\dots,s$, and for one (therefore all) 
representatives
$(\lambda_1,\dots,\lambda_k,\mu_1,\dots,\mu_r,\nu_1,\dots,\nu_s)$ of
an equivalence class $[(\lambda_1,\dots,\lambda_k,\mu_1,\dots,\mu_r,\nu_1,\dots,\nu_s)]$. 
\end{definition}
One observes that Definition \ref{dpcr} is the dual version of Definition \ref{pcr} identifying $H^-$ with the anihilator of $(\lambda_1,\dots,\lambda_k,\mu_1,\dots,\mu_r)$ and $H^+$ with the anihilator of $(\lambda_1,\dots,\lambda_k,\nu_1,\dots,\nu_s)$. Thus $H^+$ is $r$-dimensional, and $H^-$ is $s$-dimensional, with $H=H^+\oplus H^-$ being $r+s=n$-dimensional. In particular $H$ is integrable iff $\der\lambda_i\dz\lambda_1\dz\dots\dz\lambda_k=0$ for all $i=1,\dots,k$.
\begin{example}\label{el1}
Given an $n$-th order ODE (\ref{ode}) we introduce a canonical para-CR
structure on the space $\mathcal J$ of the $(n-1)$ jets. 
Parametrizing this space by 
$(x,y,y^1,\dots,y^{n-1})$ we introduce
\begin{eqnarray}
&&\lambda=\der y-y^1\der x,\nonumber\\
&&\mu=\der x,\label{cf}\\
&&\nu_i=\der y^i-y^{i+1}\der x,\quad\quad\quad\quad\quad\quad\quad\quad\quad\forall i=1,\dots,n-2,\nonumber\\
&&\nu_{n-1}=\der y^{n-1}-F(x,y,y^1,\dots,y^{n-1})\der x,\nonumber
\end{eqnarray} 
and define the class $[\lambda,\mu,\nu_\alpha]$ on $\mathcal J$ 
via: $$(\lambda,\mu,\nu_\alpha)\sim(\lambda',\mu',\nu'_\alpha)\quad\quad{\rm
  iff}\quad\quad \lambda'=a\lambda,~\mu'=f\mu+b\lambda,~{\rm and}~\nu'_\alpha=h^\beta_\alpha\nu_\beta+c_\alpha\lambda,$$
with functions $a,b,c,h^\beta_\alpha,c_\alpha$ on $\mathcal J$, such
that $af\det(h^\alpha_\beta)\neq 0$. 
Obviously $\lambda\dz\mu\dz\nu_1\dz\dots\dz\nu_{n-1}\neq 0$,
$\der\lambda\dz\lambda\dz\mu\equiv 0\equiv\der\mu\dz\lambda\dz\mu$,
and for dimensional reasons 
$\der\lambda\dz\lambda\dz\nu_1\dz\dots\dz\nu_{n-1}\equiv 0\equiv
\der\nu_\alpha\dz\lambda\dz\nu_1\dz\dots\dz\nu_{n-1}$ for all
$\alpha=1,\dots,n-1$. This shows that $({\mathcal
  J},[\lambda,\mu,\nu_\alpha])$ is an abstract para-CR structure of type
$(1,1,n-1)$. This para-CR structure is called the \emph{canonical
  para-CR structure of an ODE} $y^{(n)}=F(x,y,y',\dots,y^{(n-1)})$. 
\end{example}

Returning to the general discussion we have the following Proposition.
\begin{proposition}\label{repp}
Every abstract para-CR manifold $(M,[\lambda_i,\mu_A,\nu_\alpha])$ of type
$(k,r,s)$ locally admits two  overlaping coordinate systems
$(y_i,x_A,a_\alpha)$ and $(\bar{y}_i,x_A,a_\alpha)$ in which the forms
$(\lambda_i,\mu_A,\nu_\alpha)$ can be written either as:
\be\lambda_i=\der y_i+L^A_{~i}\der x_A,\quad\quad\mu_A=\der x_A,\quad\quad\nu_\alpha=\der a_\alpha,\label{repa}\ee
or by
\be\lambda_i=\der \bar{y}_i+\bar{L}^\alpha_{~i}\der a_\alpha,\quad\quad\mu_A=\der x_A,\quad\quad\nu_\alpha=\der a_\alpha,\label{repb}\ee
where $L^A_{~i}=L^A_{~i}(y,x,a)$ and
$\bar{L}^\alpha_{~i}=\bar{L}^\alpha_{~i}(\bar{y},x,a)$, $i=1,\dots,k$,
$A=1,\dots, r$, $\alpha=1,\dots,s$, 
are appropriate real functions of the respective variables  $(y_i,x_A,a_\alpha)$ and $(\bar{y}_i,x_A,a_\alpha)$.  
\end{proposition}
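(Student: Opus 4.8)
The plan is to read the two integrability conditions \IT{\ref{integ1}} and \IT{\ref{integ2}} as Frobenius statements and then apply the Frobenius theorem to each of the two integrable distributions, taking care to choose the representatives of the equivalence class \IT{\ref{pt}} so that the coordinate functions $x_A$ and $a_\alpha$ come out common to both charts. By the dual description of Definition \ref{dpcr}, $H^-$ is the annihilator of the Pfaffian system $I_-=\langle\lambda_i,\mu_A\rangle$ and $H^+$ that of $I_+=\langle\lambda_i,\nu_\alpha\rangle$. The conditions \IT{\ref{integ1}} and \IT{\ref{integ2}} say exactly that $I_-$ and $I_+$ are closed differential ideals, i.e. that $H^-$ and $H^+$ are Frobenius-integrable, so each admits a full set of local first integrals.

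The structural fact that makes the bookkeeping work is that $\Span(\lambda_1,\dots,\lambda_k)=I_-\cap I_+$ pointwise. Indeed $\lambda_i\in I_-\cap I_+$, while the non-degeneracy axiom $\lambda_1\dz\dots\dz\nu_s\neq 0$ forces $I_-+I_+$ to be the whole cotangent space, so $\dim(I_-\cap I_+)=(k+r)+(k+s)-(k+n)=k$. Hence the $\mu_A$ descend to a basis of $I_-/(I_-\cap I_+)$ and the $\nu_\alpha$ to a basis of $I_+/(I_-\cap I_+)$, each determined only modulo the $\lambda_i$ --- which is precisely the freedom recorded in \IT{\ref{pt}}. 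First I would use this freedom to make $\mu_A$ and $\nu_\alpha$ exact: applying Frobenius to the closed ideal $I_-$ produces first integrals $g_1,\dots,g_{k+r}$ of $H^-$ with $I_-=\langle\der g_1,\dots,\der g_{k+r}\rangle$; among them I pick $r$ whose differentials project to a basis of $I_-/(I_-\cap I_+)$, rename them $x_A$, and set $\mu_A=\der x_A$. The symmetric construction on $I_+$ yields functions $a_\alpha$ with $\nu_\alpha=\der a_\alpha$, and the same dimension count guarantees that $\{\lambda_i,\der x_A,\der a_\alpha\}$ is a coframe.

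With $x_A$ and $a_\alpha$ fixed once and for all, I would obtain \IT{\ref{repa}} by completing the first integrals $x_A$ of $H^-$ to a coordinate system $(y_i,x_A)$ on the local leaf space of $H^-$, adjoining $k$ further independent first integrals $y_i$; then $(y_i,x_A,a_\alpha)$ is a chart on $M$, since $\Span(\nu_\alpha)\cap I_-=0$ makes the $\der a_\alpha$ independent of $I_-$. As $\lambda_i\in I_-=\Span(\der y_j,\der x_A)$ it has no $\der a_\alpha$ component, so $\lambda_i=P_i{}^j\der y_j+Q_i{}^A\der x_A$ with $P$ invertible; the rescaling $\lambda_i\mapsto(P^{-1})_i{}^j\lambda_j$ permitted by \IT{\ref{pt}} turns this into $\lambda_i=\der y_i+L^A_{~i}\der x_A$, which is \IT{\ref{repa}}. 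Representation \IT{\ref{repb}} is produced by the mirror-image argument on $I_+$: complete $a_\alpha$ to first integrals $(\bar y_i,a_\alpha)$ of $H^+$, so that $\lambda_i\in I_+=\Span(\der\bar y_j,\der a_\alpha)$ and, after a $\lambda$-rescaling, $\lambda_i=\der\bar y_i+\bar{L}^\alpha_{~i}\der a_\alpha$. The shared functions $x_A,a_\alpha$ are used in both charts; only the transverse coordinate ($y_i$ versus $\bar y_i$) and the normalization of the $\lambda_i$ inside their equivalence class change.

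The routine ingredient is the Frobenius theorem itself. The hard part is the bookkeeping of the second paragraph: arranging that $\mu_A$ and $\nu_\alpha$ can be made exact simultaneously, with a single common set of functions $x_A,a_\alpha$, while staying inside the equivalence \IT{\ref{pt}}. This rests entirely on the identity $\Span(\lambda_i)=I_-\cap I_+$ and on the fact that $\mu_A$ and $\nu_\alpha$ are defined only modulo the $\lambda_i$. I would also emphasize what is \emph{not} needed: the distribution $H=H^+\oplus H^-$ need not be integrable, and it is exactly this failure that prevents one chart from straightening both $H^+$ and $H^-$ at once, which is why the two distinct normal forms \IT{\ref{repa}} and \IT{\ref{repb}} appear.
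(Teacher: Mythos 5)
Your proposal is correct and follows essentially the same route as the paper's proof: two applications of the Frobenius theorem, one to each of the closed ideals determined by the integrability conditions \IT{\ref{integ1}} and \IT{\ref{integ2}}, combined with the gauge freedom \IT{\ref{pt}} and the coframe condition $\lambda_1\dz\dots\dz\nu_s\neq 0$ to conclude that $(y_i,x_A,a_\alpha)$ and $(\bar{y}_i,x_A,a_\alpha)$ are coordinates. Your treatment of the identity $\Span(\lambda_i)=I_-\cap I_+$ and of the simultaneous normalization $\mu_A=\der x_A$, $\nu_\alpha=\der a_\alpha$ merely makes explicit the bookkeeping that the paper leaves implicit.
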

\begin{proof}
The proof is a simple application of the Frobenius theorem:

On one hand, the Frobenius theorem applied to 
the integrability conditions (\ref{integ1}), together with the use of
transformations (\ref{pt}), imply
the existence of functions $(y_i,x_A,L^A_i)$ for which $\lambda_i=\der
y_i+L^A_{~i}\der x_A$ and $\mu_A=\der x_A$ holds. On the other
hand, the same argument applied to the integrability conditions
(\ref{integ2}), imply the existence of functions
$(\bar{y}_i,a_\alpha,\bar{L}^\alpha_i)$ for which $\lambda_i=\der
\bar{y}_i+\bar{L}^\alpha_{~i}\der a_\alpha$ and $\nu_\alpha=\der
a_\alpha$ holds. But since
$\lambda_1\dz\dots\lambda_k\dz\mu_1\dz\dots\dz\mu_r\dz\nu_1\dots\dz\nu_s\neq
0$, then taking $\lambda$s and $\mu$s from the first representation,
and $\nu$s from the second we get $\der y_1\dz\dots\der y_k\dz\der x_1\dz\dots\dz\der
x_s\dz\der a_1\dz\dots\dz\der a_s\neq 0$. Similarly, taking $\lambda$s and $\nu$s from the second representation,
and $\mu$s from the first we get $\der \bar{y}_1\dz\dots\der \bar{y}_k\dz\der x_1\dz\dots\dz\der
x_s\dz\der a_1\dz\dots\dz\der a_s\neq 0$. This shows that both sets
of functions $(y_i,x_A,a_\alpha)$
and $(\bar{y}_i,x_A,a_\alpha)$ form local coordinates on $M$. In these
coordinates the
para-CR forms have the respective desired representation (\ref{repa})
and (\ref{repb}). 
\end{proof}

\subsection{The embedding problem} Once an integrable para-CR
structure is defined in terms of
$[(\lambda_1,\dots,\lambda_k,\mu_1,\dots,\mu_r,\nu_1,\dots,\nu_s)]$ it
is easy to solve the embedding problem, at least locally. 

We have the following embedding theorem.
\begin{theorem}\label{mbe}
Every smooth $(k+r+s)$-dimensional 
abstract para-CR manifold $(M,H,K)$ with $\dim H^+=r$ and $\dim H^-=s$ is locally 
embeddable in $\bbR^{(k+r)+(k+s)}$, with the embedding 
$\iota:M\to \bbR^{(k+r)+(k+s)}$ being a para-CR diffeomorphism between $(M,H,K)$ and 
the para-CR structure which $\iota(M)$ aquires from 
the ambient space $(\bbR^{(k+r)+(k+s)},\kappa)$. Here $\kappa$ is the canonical 
linear map $\kappa: \bbR^{(k+r)+(k+s)}\to  \bbR^{(k+r)+(k+s)}$, $\kappa^2=id$, having 
$\bbR^{k+r}$ and $\bbR^{k+s}$ as its respective $+1$ and $-1$ eigenspaces. 
\end{theorem}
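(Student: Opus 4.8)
The plan is to let Proposition~\ref{repp} do the analytic work and then to write down an explicit embedding and check, by a direct linear-algebra computation, that the para-CR structure induced from $(\bbR^{(k+r)+(k+s)},\kappa)$ coincides with $(M,H,K)$. Proposition~\ref{repp} furnishes two overlapping coordinate systems $(y_i,x_A,a_\alpha)$ and $(\bar y_i,x_A,a_\alpha)$, sharing the \emph{same} $x_A$ and $a_\alpha$, in which one and the same family of forms is written both as (\ref{repa}) and as (\ref{repb}). I would coordinatize the target so that $\kappa$ has $+1$-eigenspace $\bbR^{k+r}$ with coordinates $(Y_i,X_A)$ and $-1$-eigenspace $\bbR^{k+s}$ with coordinates $(\bar Y_i,Z_\alpha)$, and then define
\[
\iota:M\to\bbR^{(k+r)+(k+s)},\qquad (Y_i,X_A,\bar Y_i,Z_\alpha)\circ\iota=(y_i,x_A,\bar y_i,a_\alpha).
\]
The guiding principle behind this choice is that the functions placed in the $+1$-eigenspace, $(y_i,x_A)$, are exactly those constant along $H^-=\Span(\partial_{a_\alpha})$, while those placed in the $-1$-eigenspace, $(\bar y_i,a_\alpha)$, are exactly those constant along $H^+$; indeed in the coordinates (\ref{repb}) one reads off $H^+=\Span(\partial_{x_A})$, so $\bar y_i$ and $a_\alpha$ are first integrals of $H^+$.

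The preliminary steps are routine. Since $(y_i,x_A,a_\alpha)$ is already a chart, $\iota$ is the identity on those slots, hence an injective immersion and so locally an embedding, with image $\Sigma=\iota(M)$ the codimension-$k$ graph $\bar Y_i=\bar y_i(Y,X,Z)$. Equating the $\der y_j$-components of (\ref{repa}) and (\ref{repb}) yields $\partial\bar y_i/\partial y_j=\delta^j_{i}$, which makes the relevant $k\times k$ Jacobians invertible; this is what guarantees that the embedding is automatically \emph{generic}, i.e.\ that the ambient intersections below have the expected dimensions.

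The substance is the matching of structures. For a submanifold one has $H_{\rm ind}=\kappa(T\Sigma)\cap T\Sigma$ with $K_{\rm ind}=\kappa|_{H_{\rm ind}}$, and a one-line argument (if $v\in\chi_+\cap T\Sigma$ then $\kappa v=v\in T\Sigma$, so $v\in H_{\rm ind}$ with eigenvalue $+1$, and conversely) gives $H^\pm_{\rm ind}=T\Sigma\cap\chi_\pm$. I would then compute directly, writing $e_A=\partial_{x_A}-\sum_iL^A_{~i}\partial_{y_i}$ for the generators of $H^+$,
\[
\der\iota(e_A)=-\sum_iL^A_{~i}\partial_{Y_i}+\partial_{X_A}\in\chi_+,\qquad
\der\iota(\partial_{a_\alpha})=\sum_i\tfrac{\partial\bar y_i}{\partial a_\alpha}\partial_{\bar Y_i}+\partial_{Z_\alpha}\in\chi_-,
\]
using $e_A(\bar y_i)=e_A(a_\alpha)=0$ and $\partial_{a_\alpha}(y_i)=\partial_{a_\alpha}(x_A)=0$. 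Hence $\der\iota(H^+)\subseteq T\Sigma\cap\chi_+$ and $\der\iota(H^-)\subseteq T\Sigma\cap\chi_-$, and the dimension count based on $\partial\bar y_i/\partial y_j=\delta^j_i$ shows $T\Sigma\cap\chi_+$ and $T\Sigma\cap\chi_-$ have dimensions exactly $r$ and $s$, forcing equalities. Since $\kappa$ acts as $+1$ on $\der\iota(H^+)$ and as $-1$ on $\der\iota(H^-)$, the differential $\der\iota$ intertwines $K$ with $K_{\rm ind}$, so $\iota$ is a para-CR diffeomorphism onto $\iota(M)$.

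I do not expect a genuine analytic obstacle, since Proposition~\ref{repp} has absorbed the Frobenius integrations; the real work is organizational. The delicate points are the correct allocation of coordinates to the two eigenspaces (the ``$+1$'' slot must receive the $H^-$-invariant functions and the ``$-1$'' slot the $H^+$-invariant ones) and the verification that $H^\pm_{\rm ind}=T\Sigma\cap\chi_\pm$ come out with the right dimensions $r$ and $s$. The single computation worth doing with care is the comparison of (\ref{repa}) and (\ref{repb}) giving $\partial\bar y_i/\partial y_j=\delta^j_i$, from which the genericity of $\iota$ follows for free.
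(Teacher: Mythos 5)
Your proposal is correct and takes essentially the same route as the paper: both rest on Proposition \ref{repp} and use the identical embedding $(y_i,x_A,a_\alpha)\mapsto(y_i,x_A,\bar{y}_j(y,x,a),a_\alpha)$, with the $+1$-eigenspace receiving the $H^-$-invariant functions and the $-1$-eigenspace the $H^+$-invariant ones — the paper arrives at these same functions as the solutions of its embedding equations (\ref{embeq1})--(\ref{embeq2}), i.e. $Y_\alpha(f)=0$ and $X_A(h)=0$. Your explicit verification that $\der\iota$ carries $H^\pm$ onto $T\Sigma\cap\chi_\pm$ spells out a step the paper leaves implicit, and is sound; note only that the identity $\partial\bar{y}_i/\partial y_j=\delta^j_{~i}$ can be replaced by mere invertibility of this Jacobian (automatic, since $(y_i,x_A,a_\alpha)$ and $(\bar{y}_i,x_A,a_\alpha)$ are both charts on the same region), which is all your dimension count requires.
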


\begin{proof} 
Choosing a representative $(\lambda_1,\dots,\lambda_k,\mu_1,\dots,\mu_r,\nu_1,\dots,\nu_s)$ we consider vector fields 
$(Z_1,\dots,Z_k,X_1,\dots,X_r,Y_1,\dots,Y_s)$ which are the respective
duals of $(\lambda_1,$ $\dots,\lambda_k,\mu_1,\dots,\mu_r,\nu_1,\dots,\nu_s)$. This in particular means that $H^+={\rm Span}(X_1,\dots,X_r)$ and $H^-={\rm Span}(Y_1,\dots,Y_s)$. Also any differentiable function $f:M\to \bbR$ has $$\der f=Z_i(f)\lambda_i+X_A(f)\mu_A+Y_\alpha(f)\nu_\alpha$$ as its differential. 
 Now, one looks for all functions $f$ and $h$ on $M$ which satsify
\begin{eqnarray}
&&\der f\dz\lambda_1\dz\dots\dz\lambda_k\dz\mu_1\dz\dots\dz\mu_r=0,\quad{\rm and}\label{embeq1}\\
&&\der h\dz\lambda_1\dz\dots\dz\lambda_k\dz\nu_1\dz\dots\dz\nu_s=0,\label{embeq2}
\end{eqnarray}
or, what is the same,
$$Y_\alpha(f)=0,\quad\forall\alpha=1,\dots,s,\quad\quad{\rm and}\quad X_A(h)=0,\quad\forall A=1,\dots,r.$$
If, for example, we choose
$(\lambda_1,\dots,\lambda_k,\mu_1,\dots,\mu_r,\nu_1,\dots,\nu_s)$ in
the local representation (\ref{repa}), then equations
(\ref{embeq1})-(\ref{embeq2}) are, respectively, 
\begin{eqnarray}
&&\frac{\partial f}{\partial
    a_\alpha}=0,\quad\forall\alpha=1,\dots,s,\label{ds1}\\
&&\frac{\partial h}{\partial x_A}-L^A_{~i}\frac{\partial h}{\partial
    y^i}=0,\quad\forall A=1,\dots,r.\label{ds2}
\end{eqnarray}
Thus in this coordinate system equations (\ref{ds1}) for the function
$f$ are trivial to solve: they 
obviously have $k+r$ independent solutions given by 
$f_1=y_1,\dots, f_k=y_k,\tilde{f}_1=x_1,\dots,\tilde{f}_r=x_r$. 
The equations (\ref{ds2}) for the function $h$ do not look very nice
in this coordinate system. To analyse them it is convenient to use the 
other coordinate system, $(\bar{y}_i,x_A,a_\alpha)$, in which equations
(\ref{embeq1})-(\ref{embeq2}) are, respectively:
\begin{eqnarray}
&&\frac{\partial f}{\partial
    a_\alpha}-\bar{L}^\alpha_{~i}\frac{\partial f}{\partial\bar{y}^i}=0,\quad\forall \alpha=1,\dots,s,\label{ds12}\\
&&\frac{\partial h}{\partial x_A}=0,\quad\forall  A=1,\dots,r.\label{ds22}
\end{eqnarray}
In this coordinate system equation (\ref{ds22}) for the function $h$
is trivial: it has $k+s$ independent solutions,
$h_1=\bar{y}_1,\dots,h_k=\bar{y}_k,\tilde{h}_1=a_1,\dots,\tilde{h}_s=a_s$. Now
since both coordinate systems $(y_i,x_A,a_\alpha)$ and $(\bar{y}_i,x_A,a_\alpha)$ are
defined over the same region of $M$, and because the coordinates
$(x,a)$ are the same in both systems, we have:
$$\bar{y}_i=\bar{y}_i(y,x,a),\quad\quad{\rm and}\quad\quad y_i=y_i(\bar{y},x,a).$$ 
This shows that the two maps:
$$M\ni(y_i,x_A,a_\alpha)\stackrel{\iota}{\mapsto}(f_i,\tilde{f}_A,h_j,\tilde{h}_\alpha)=(y_i,x_A,\bar{y}_j(y,x,a),a_\alpha)\in\bbR^{(k+r)+(k+s)}$$
and
$$M\ni(\bar{y}_i,x_A,a_\alpha)\stackrel{\bar{\iota}}{\mapsto}(f_i,\tilde{f}_A,h_j,\tilde{h}_\alpha)=(y_i(\bar{y},x,a),x_A,\bar{y}_j,a_\alpha)\in\bbR^{(k+r)+(k+s)}$$
give two local embeddings of the para-CR structure 
$(M,[(\lambda,\mu,\nu)])$ in $\bbR^{(k+r)+(k+s)}$ with coordinates
$(f_i,\tilde{f}_A,h_j,\tilde{h}_\alpha)$. It follows that the $\kappa$
operator in $\bbR^{(k+r)+(k+s)}$, splitting it onto
$\bbR^{(k+r)+(k+s)}=\bbR^{k+r}\times\bbR^{k+s}$, induces two para-CR
structures on the repective images of $\iota$ and $\bar{\iota}$. These
two para-CR structures are locally equivalent, and are locally equivalent to the original structure from $M$.\end{proof}

Para-CR structures with $k=1$, for obvious reasons, are called para-CR structures of \emph{hypersurface type}. 

\subsection{Para-CR equivalence a'la Cartan}
In the following a reformulation of the (local) equivalence of two 
para-CR manifolds, in the language of the differential forms
$(\lambda_1,\dots,\lambda_k,\mu_1,$ $\dots,\mu_r,\nu_1,\dots,\nu_s)$, 
will be useful. It can be seen that Definition \ref{dpcrr} is
equivalent to
\begin{definition}
Two para-CR structures $(M,[(\lambda_i,\mu_A,\nu_\alpha)])$ and 
$(M',({\lambda_i}',{\mu_A}',{\nu_\alpha}')])$, $i=1,...,k$,
  $A=1,...,r$, $\alpha=1,...,s$, on $k+r+s$ dimensional manifolds $M$
  and $M'$   
are (locally) equivalent iff there exists a (local) diffeomorphism 
$\Phi:M\to M'$ and real functions 
$a^i_{~j}$, $b^j_{~A}$, $c^j_{~\alpha}$, $f^A_{~B}$,
$h^\alpha_{\beta}$ on $M$ such that:
\begin{eqnarray}
&&\Phi^*(\lambda'_i)=a^j_{~i}\lambda_j,\nonumber\\
&&\Phi^*(\mu'_A)=f^B_{~A}\mu_B+b^j_{~A}\lambda_j,\\
&&\Phi^*(\nu'_\alpha)=h^\beta_{~\alpha}\nu_\beta+c^j_{~\alpha}\lambda_j,\nonumber
\label{ptt}\end{eqnarray}
and $${\rm det}(a^i_{~j}){\rm det}(f^A_{~B}){\rm
  det}(h^\alpha_{~\beta})\neq 0$$ for all
$i,j=1,\dots k$; $A,B=1,\dots, r$; $\alpha,\beta=1,\dots, s$.
\end{definition}
\section{Para-CR structures of type $(1,1,n-1)$}
In Example \ref{el1} we associated a para-CR structure of type
$(1,1,n-1)$ with every $n$-th order ODE in the form (\ref{ode}). A
natural question arises: is every para-CR structure of type
$(1,1,n-1)$, 
at least locally, para-CR equivalent to a canonical type 
$(1,1,n-1)$ para-CR structure of some $n$-th order ODE
(\ref{ode})? Since all canonical para-CR structures of $n$-th order
ODEs, as in Example \ref{el1}, satisfy $\der\lambda\dz\lambda=\der
x\dz\der y^1\dz\der y\neq 0$, and since nonvanishing of
$\der\lambda\dz\lambda$ is invariant under any para-CR map
$\lambda\to\lambda'=a\lambda$, then we have
\begin{proposition}
A type $(1,1,n-1)$ para-CR structure $[\lambda,\mu,\nu_\alpha]$ which is locally equivalent to
the canonical para-CR structure of an $n$-th order ODE
$y^{(n)}=F(x,y,y',\dots,y^{(n-1)})$ has $\der\lambda\dz\lambda\neq 0$. 
\end{proposition}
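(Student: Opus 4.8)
The plan is to verify the claim directly on the canonical ODE model and then to show that the nonvanishing of $\der\lambda\dz\lambda$ is a para-CR invariant, so that it is automatically inherited by any locally equivalent structure. First I would compute $\der\lambda\dz\lambda$ for the canonical representative (\ref{cf}). Since $\lambda=\der y-y^1\der x$, we get $\der\lambda=\der x\dz\der y^1$, and because $\der x\dz\der x=0$ the only surviving term is
$$\der\lambda\dz\lambda=\der x\dz\der y^1\dz\der y,$$
which is a nowhere-vanishing $3$-form. This establishes the claim for the model structure of any ODE (\ref{ode}), regardless of $F$.

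Second I would check invariance under the residual freedom in the $\lambda$ slot. For type $(1,1,n-1)$ we have $k=1$, so the transformation law (\ref{pt}) reduces in its $\lambda$ component to $\lambda'=a\lambda$, where $a$ is a nowhere-vanishing function (the condition $\det(a^i_{~j})\neq 0$ becomes simply $a\neq 0$). Using $\der\lambda'=\der a\dz\lambda+a\,\der\lambda$ together with $\lambda\dz\lambda=0$, the cross term drops out and
$$\der\lambda'\dz\lambda'=a^2\,\der\lambda\dz\lambda.$$
Hence $\der\lambda'\dz\lambda'\neq 0$ if and only if $\der\lambda\dz\lambda\neq 0$, so this condition does not depend on the representative chosen within the class $[\lambda,\mu,\nu_\alpha]$.

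Finally I would combine these two observations with the equivalence diffeomorphism. If $[\lambda,\mu,\nu_\alpha]$ on $M$ is para-CR equivalent to the canonical structure $[\lambda',\mu',\nu'_\alpha]$ of an $n$-th order ODE on some $M'$, then by Definition \ref{dpcrr} (in its dual reformulation) there is a local diffeomorphism $\Phi:M\to M'$ with $\Phi^*\lambda'=a\lambda$ and $a\neq 0$. Since $\Phi^*$ commutes with $\der$,
$$a^2\,\der\lambda\dz\lambda=\der(\Phi^*\lambda')\dz\Phi^*\lambda'=\Phi^*\big(\der\lambda'\dz\lambda'\big).$$
The right-hand side is the pullback under a diffeomorphism of the nowhere-vanishing form computed in the first step, hence nowhere vanishing; as $a\neq 0$ this forces $\der\lambda\dz\lambda\neq 0$ on $M$.

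The computations are entirely routine, so I do not expect a genuine obstacle. The only points meriting a moment's care are that in type $(1,1,n-1)$ the transformation of $\lambda$ involves only the single scalar $a$ (which is exactly what $k=1$ delivers), and that the cross term $\der a\dz\lambda\dz\lambda$ vanishes identically because $\lambda\dz\lambda=0$; these are precisely what make $\der\lambda\dz\lambda\neq 0$ a well-defined, representative-independent, and diffeomorphism-invariant condition.
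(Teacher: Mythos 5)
Your proof is correct and follows exactly the paper's own argument: compute $\der\lambda\dz\lambda=\der x\dz\der y^1\dz\der y\neq 0$ on the canonical ODE model, then observe that nonvanishing of $\der\lambda\dz\lambda$ is preserved under the rescaling $\lambda\to\lambda'=a\lambda$, $a\neq 0$, which is all a para-CR equivalence can do to $\lambda$ when $k=1$. Your third step, carrying the property through the pullback by the equivalence diffeomorphism, merely spells out what the paper leaves implicit.
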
 
In view of this proposition, we now ask if every type $(1,1,n-1)$
para-CR structure with $\der\lambda\dz\lambda\neq 0$ is locally
equivalent to a structure from Example \ref{el1}. To illustrate
the problems associated with this question we consider low dimensions first.
\subsection{Para-CR structures of type $(1,1,1)$}\label{111}
This case, in a bit different context, was studied by one of us in
\cite{nurspar}. We have the following proposition.
\begin{proposition}\label{vry}
Every type $(1,1,1)$ para-CR structure $(M,[\lambda,\mu,\nu])$ with
$\der\lambda\dz\lambda\neq 0$ is locally para-CR equivalent to a type
$(1,1,1)$ para-CR structure associated with a point equivalence class
of second order ODEs. 
\end{proposition}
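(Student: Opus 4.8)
The plan is to bring the structure to the normal form of Proposition \ref{repp} and then to recognise the outcome directly as the canonical para-CR structure of Example \ref{el1} with $n=2$. In type $(1,1,1)$ the manifold $M$ is three-dimensional, so all four integrability conditions (\ref{integ1})--(\ref{integ2}) are $4$-forms and vanish for dimensional reasons; hence the hypotheses of Proposition \ref{repp} are met automatically, and I may choose the representation (\ref{repa}), which for $k=r=s=1$ reads $\lambda=\der y+L\,\der x$, $\mu=\der x$, $\nu=\der a$ with $L=L(y,x,a)$.

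The first real step is to read off the contact hypothesis in these coordinates. A short computation gives $\der\lambda\dz\lambda=L_a\,\der a\dz\der x\dz\der y$, so $\der\lambda\dz\lambda\neq0$ is exactly the condition $L_a\neq0$. This is the heart of the matter: it makes $p:=-L(y,x,a)$ an admissible replacement for $a$, so that $(y,x,a)\mapsto(y,x,p)$ is a local diffeomorphism. Geometrically, it is precisely this invertibility that promotes the leaf coordinate $a$ of the foliation $H^-$ to the first-jet coordinate $y'=p$.

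Working now in $(x,y,p)$, the forms $\lambda=\der y-p\,\der x$ and $\mu=\der x$ are already of the desired shape. For $\nu$ I would solve $p=-L$ for $\der a$, use $\der y=\lambda+p\,\der x$ to remove the $\der y$-contribution, and pass to the equivalent representative $\nu':=-L_a\,\nu-L_y\,\lambda$ permitted by (\ref{pt}) (so $h=-L_a\neq0$ and $c=-L_y$). This yields $\nu'=\der p-F(x,y,p)\,\der x$ with $F:=-(L_x+pL_y)$ evaluated along $a=a(x,y,p)$, the nonvanishing of $L_a$ guaranteeing that $F$ descends to an honest function of $(x,y,p)$ alone.

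Comparison with Example \ref{el1} for $n=2$ then finishes the argument: the triple $(\der y-p\,\der x,\ \der x,\ \der p-F\,\der x)$ is verbatim the canonical para-CR structure of the second order ODE $y''=F(x,y,y')$, and since the coefficients $a=1$, $f=1$, $b=0$, $h=-L_a$, $c=-L_y$ satisfy the admissibility $afh\neq0$ of (\ref{pt}), the coordinate change is a para-CR equivalence onto that structure. As point-equivalent second order ODEs carry para-CR equivalent canonical structures, the structure one lands on is that of a point equivalence class of second order ODEs. I expect no genuine analytic obstacle; the only delicate part is bookkeeping the equivalence relation (\ref{pt}) through the coordinate change, in particular checking that eliminating the $\der y$-term really leaves $F$ independent of $a$.
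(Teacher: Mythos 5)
Your proof is correct and follows essentially the same route as the paper's: both rest on a Frobenius-type normalization of $(\lambda,\mu,\nu)$, use $\der\lambda\dz\lambda\neq 0$ to promote the coefficient function ($p=-L$ in your notation) to a third coordinate, and then gauge $\nu$ into the form $\der p-F(x,y,p)\,\der x$. The only cosmetic difference is that you reach the initial normal form by citing Proposition \ref{repp} and keep an explicit formula $F=-(L_x+pL_y)$, whereas the paper applies the Frobenius theorem directly to $\lambda$ and $\mu$ and leaves the final coefficient $Q$ unspecified.
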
 
\begin{proof}
This Proposition was proved in \cite{nurspar}. For completness we
present this proof also here. 

Choosing any representative $(\lambda,\mu,\nu)$ of $[\lambda,\mu,\nu]$, 
due to the low dimension of $M$, we have
$\der\lambda\dz\lambda\dz\mu\equiv 0$ and
$\der\mu\dz\lambda\dz\mu\equiv 0$. Thus, by the Frobenius theorem, 
we have functions $(x,y,A,B,C,E)$ on $M$ such that $\lambda=A\der
x+B\der y$, and $\mu=C \der x+E\der y$. Considering the allowed para-CR gauge
of $\lambda$ and $\mu$, we can rescale $\lambda$ to the form
$\lambda=\der y-p\der x$, with some function $p$ on $M$, and shift and 
rescale $\mu$ to the form $\mu=\der x$. Now our assumption
$0\neq\der\lambda\dz\lambda$ shows that $0\neq \der x\dz\der y\dz\der
p$ and, thus, $(x,y,p)$ can be considered a coordinate system on
$M$. In this coordinate system 
the form $\mu$ is locally $\mu=\al\der x+\bet\der y+\gamma\der p$,
where $\al,\bet,\gamma$ are some functions on $M$. Because of the
allowed para-CR transformations for $\mu$, we can, without loss of
generality, take $\mu=\der p-Q(x,y,p)\der x$, with $Q=Q(x,y,p)$ being 
some function on $M$. Thus our type $(1,1,1)$ para-CR structure
$(M,[\lambda,\mu,\nu])$ with $\der\lambda\dz\lambda\neq 0$ 
is locally para-CR equivalent to $(M,[\lambda=\der y -p\der x,\mu=\der
  x,\nu=\der p-Q\der x])$. Therefore $M$ can be locally identified with the
first jet space of the equation $y''=Q(x,y,y')$. The $(x,y,p)$ are 
canonical coordinates
$(x,y,p)$ on this jet space and the contact forms are given by 
the para-CR forms $\lambda=\der y-p\der x$, $\nu=\der
p-Q\der x$. The para-CR structure associated with the point equivalent
class of ODEs represented by $y''=Q(x,y,y')$ is locally para-CR
equivalent to the para-CR structure we started with.      
\end{proof}
Further details about this case, including relations to the Fefferman
construction, can be found in \cite{nurspar}.
\subsection{Para-CR structures of type $(1,1,2)$}\label{112}
Let $(M,[\lambda,\mu,\nu_1,\nu_2])$ be a general para-CR manifold of
type $(1,1,2)$ with 
\be\der\lambda\dz\lambda\neq 0.\label{gr}\ee 
By Proposition \ref{repp}, we can introduce a coordinate
system $(x,y,a_1,a_2)$ on $M$ in which
$$\lambda=\der y-p(x,y,a_1,a_2)\der x,\quad\quad\mu=\der x,\quad\quad\nu_1=\der
a_1,\quad\quad\nu_2=\der a_2,$$
with some function $p$ of the variables $(x,y,a_1,a_2)$.  

Our key question is if we can find new coordinates 
$(x,y,y^1,y^2)$ on $M$, and functions $h^\alpha_\beta$,
$c_\alpha$, $F$ on $M$, so that the form 
$\nu'_1=h^1_1\nu_1+h^2_1\nu_2+c_1\lambda$ is equal to $$\nu'_1=\der
y^1-y^2\der x$$ and the form 
$\nu'_2=h^1_2\nu_1+h^2_2\nu_2+c_2\lambda$ is equal to $$\nu'_2=\der y^2-F(x,y,y^1,y^2)\der
x.$$
If this were possible, we could bring this para-CR structure, by a para-CR transformation, to the
canonical form corresponding to the third order ODE
$y'''=F(x,y,y',y'')$. 

When looking for the desired coordinates $(x,y,y^1,y^2)$ we proceed as follows:

We set $$y^1=p(x,y,a_1,a_2),$$ 
and notice that (\ref{gr}) implies $\der x\dz\der y\dz\der y^1\neq 0$.
Thus the functions $(x,y,y^1)$ can serve as three independent
coordinates on $M$. The condition $\der x\dz\der y\dz\der y^1\neq 0$ also  
means that at least one of the derivatives $\frac{\partial y^1}{\partial
  a_1}$ or $\frac{\partial y^1}{\partial a_2}$ is not equal to zero. Assuming, without loss of
generality, that $\frac{\partial y^1}{\partial a_1}\neq 0$, we can solve $y^1=p(x,y,a_1,a_2)$ for
$a_1$ obtaining $$a_1=a_1(x,y,y^1,a_2).$$ This enables us to parametrize $M$ by
$(x,y,y^1,a_2)$. In this new parametrization we have
$$\lambda=\der y-y^1\der x,\quad\quad\mu=\der x,\quad 
\nu_1=\der [a_1(x,y,y^1,a_2)],\quad\quad\nu_2=\der a_2.$$
We note that since $$\nu_1=\der [a_1(x,y,y^1,a_2)]=\frac{\partial a_1}{\partial x}\der
x+\frac{\partial a_1}{\partial y}\der y+\frac{\partial a_1}{\partial
  y^1}\der y^1+\frac{\partial a_1}{\partial a_2}\der a_2,$$ and
$\lambda\dz\mu\dz\nu_1\dz\nu_2\neq 0$, then $\der y\dz\der
x\dz\frac{\partial a_1}{\partial y^1}\der y^2\dz\der a_2\neq 0$, and
hence $\frac{\partial a_1}{\partial y^1}\neq 0$.
Thus we may replace the para-CR form $\nu_1$ by the form $$\nu'_1=\Big(\frac{\partial a_1}{\partial y^1}\Big)^{-1}\Big(\nu_1-\frac{\partial a_1}{\partial a_2}\nu_2-\frac{\partial
  a_1}{\partial y}\lambda\Big)$$ from the
same para-CR class, obtaining 
$$\nu_1'=\der y^1-y^2\der x.$$
Here the function $y^2$ is given by  
\be
y^2=-\Big(\frac{\partial
  a_1}{\partial x}+y^1\frac{\partial
  a_1}{\partial y}\Big)\Big(\frac{\partial
  a_1}{\partial y^1}\Big)^{-1}\label{y22}.\ee

Summarizing, starting with an arbitrary type $(1,1,2)$ para-CR
structure $(M,[\lambda,\mu,$ $\nu_1,\nu_2])$, with
$\der\lambda\dz\lambda\neq 0$, we can always choose the coordinate
system $(x,y,y^1,a_2)$ and the representatives
of the basis 1-forms, so that the para-CR structure is represented by
$$\lambda=\der y-y^1\der x,\quad\quad\mu=\der x,\quad\quad\nu_1=\der
y^1-y^2\der x,\quad\quad\nu_2=\der a_2,$$
with a function $y^2=q(x,y,y^1,a_2)$ given by (\ref{y22}).  

Now, two cases may occur: 
\begin{itemize}
\item the general case, when $\frac{\partial y^2}{\partial a_2}\neq 0$,
  or
\item the degenerate case, when $\frac{\partial y^2}{\partial a_2}= 0$. 
\end{itemize}

In the general case, i.e. in the case when
\be
\frac{\partial }{\partial a_2}\Big(\Big(\frac{\partial
  a_1}{\partial x}+y^1\frac{\partial
  a_1}{\partial y}\Big)\Big(\frac{\partial
  a_1}{\partial y^1}\Big)^{-1}\Big)\neq 0,\label{sd}
\ee
we can solve $y^2=q(x,y,y^1,a_2)$ for $a_2$ obtaining
$$a_2=a_2(x,y,y^1,y^2),$$
and a system of coordinates $(x,y,y^1,y^2)$ on $M$, in which 
$$\lambda=\der y-y^1\der x,\quad\quad\mu=\der x,\quad\quad\nu_1=\der
y^1-y^2\der x,\quad\quad\nu_2=\der [a_2(x,y,y^1,y^2)].$$
Now we have
$$\nu_2=\frac{\partial a_2}{\partial x}\der
x+\frac{\partial a_2}{\partial y}\der y+\frac{\partial a_2}{\partial
  y^1}\der y^1+\frac{\partial a_2}{\partial y^2}\der y^2,$$ and since
$\lambda\dz\mu\dz\nu_1\dz\nu_2\neq 0$, we get $\frac{\partial
  a_2}{\partial y^2}\neq 0$. This enables us to replace $\nu_2$ by
another representative
$$\nu'_2=\Big(\frac{\partial a_2}{\partial y^2}\Big)^{-1}\Big(\nu_2-\frac{\partial a_2}{\partial
  y^1}\nu_1-\frac{\partial a_2}{\partial y}\lambda\Big),$$
which can be written as:
$$\nu'_2=\der y^2-F(x,y,y^1,y^2)\der x,$$
with $$F(x,y,y^1,y^2)=-\Big(\frac{\partial a_2}{\partial
  x}+y^1\frac{\partial a_2}{\partial y}+y^2\frac{\partial
  a_2}{\partial y^1}\Big)\Big(\frac{\partial a_2}{\partial
  y^2}\Big)^{-1}.$$
 Summarizing we have the following proposition.
\begin{proposition}
Every type $(1,1,2)$ para-CR structure $(M,[\lambda,\mu,\nu_1,\nu_2])$
with $\der\lambda\dz\lambda\neq 0$ can be locally represented by
1-forms
$$\lambda=\der y-y^1\der x,\quad\quad\mu=\der x,\quad\quad\nu_1=\der
y^1-y^2\der x,\quad\quad\nu_2=\der a_2,$$
with a function $y^2=q(x,y,y^1,a_2)$ of coordinates $(x,y,y^1,a_2)$ on
$M$. If, in addition, the function $y^2$ satisfies $\frac{\partial
  y^2}{\partial a_2}\neq 0$ in $\mathcal U\subset M$, one can
introduce a coordinate system $(x,y,y^1,y^2)$ in $\mathcal U$ such
that the para-CR structure can be represented by
$$\lambda=\der y-y^1\der x,\quad\quad\mu=\der x,\quad\quad\nu_1=\der
y^1-y^2\der x,\quad\quad\nu_2=\der y^2-F(x,y,y^1,y^2)\der x.$$
In such case the para-CR structure is locally para-CR equivalent to
the canonical para-CR structure associated with a third order ODE $y'''=F(x,y,y',y'').$
\end{proposition}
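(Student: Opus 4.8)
The plan is to build the two coordinate systems in succession, each time exploiting the allowed para-CR gauge freedom \nn{pt} together with the Frobenius-type normal form supplied by Proposition \ref{repp}. First I would invoke Proposition \ref{repp} in the type $(1,1,2)$ case (so $k=1$, $r=1$, $s=2$) to obtain coordinates $(x,y,a_1,a_2)$ in which $\lambda=\der y-p\der x$, $\mu=\der x$, $\nu_1=\der a_1$, $\nu_2=\der a_2$ for some function $p=p(x,y,a_1,a_2)$. The hypothesis \nn{gr} then reads $\der x\dz\der y\dz\der p\neq 0$, so setting $y^1:=p$ makes $(x,y,y^1)$ functionally independent; in particular at least one of $\partial_{a_1}y^1,\partial_{a_2}y^1$ is nonzero, and after relabelling we may assume $\partial_{a_1}y^1\neq 0$ and solve for $a_1=a_1(x,y,y^1,a_2)$. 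This already delivers the coordinate system $(x,y,y^1,a_2)$.

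In this system $\lambda=\der y-y^1\der x$, $\mu=\der x$ and $\nu_2=\der a_2$ are already in the desired shape, and only $\nu_1=\der\big[a_1(x,y,y^1,a_2)\big]$ must be normalised. The key point is that expanding $\nu_1$ and comparing against $\lambda\dz\mu\dz\nu_1\dz\nu_2\neq 0$ forces $\partial_{y^1}a_1\neq 0$; this is the first place where the nondegeneracy condition does the real work. Granting this, I would replace $\nu_1$ by the gauge-equivalent form
$$\nu_1'=\Big(\tfrac{\partial a_1}{\partial y^1}\Big)^{-1}\Big(\nu_1-\tfrac{\partial a_1}{\partial a_2}\nu_2-\tfrac{\partial a_1}{\partial y}\lambda\Big),$$
which is a legitimate member of the para-CR class by \nn{pt}, and a direct computation shows $\nu_1'=\der y^1-y^2\der x$ with $y^2=q(x,y,y^1,a_2)$ given by \nn{y22}. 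This establishes the first assertion of the proposition.

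For the second assertion I would repeat exactly the same move one dimension up. Under the extra hypothesis $\partial_{a_2}y^2\neq 0$ the relation $y^2=q(x,y,y^1,a_2)$ can be inverted to give $a_2=a_2(x,y,y^1,y^2)$, so that $(x,y,y^1,y^2)$ is a coordinate system on $\mathcal U$. Now $\lambda,\mu,\nu_1$ retain their canonical form while $\nu_2=\der\big[a_2(x,y,y^1,y^2)\big]$; once more the nondegeneracy $\lambda\dz\mu\dz\nu_1\dz\nu_2\neq 0$ forces $\partial_{y^2}a_2\neq 0$, and replacing $\nu_2$ by the gauge-equivalent combination $\big(\tfrac{\partial a_2}{\partial y^2}\big)^{-1}\big(\nu_2-\tfrac{\partial a_2}{\partial y^1}\nu_1-\tfrac{\partial a_2}{\partial y}\lambda\big)$ brings it to $\der y^2-F\der x$ with $F$ as displayed. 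The resulting forms are precisely the canonical para-CR forms \nn{cf} of Example \ref{el1} with $n=3$, i.e. the $2$-jet representation of $y'''=F(x,y,y',y'')$; hence the two structures are para-CR equivalent by construction.

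The main obstacle is not the algebra of the gauge transformations but the two non-vanishing claims $\partial_{y^1}a_1\neq 0$ and $\partial_{y^2}a_2\neq 0$: each rescaling is permissible only because the corresponding derivative is invertible, and in both cases this invertibility has to be extracted from the single condition $\lambda\dz\mu\dz\nu_1\dz\nu_2\neq 0$ by carefully expanding the exterior products in the new coordinates. The only remaining bookkeeping is to check that every substitution keeps the forms inside the equivalence class \nn{pt}, so that the normalisations are genuine para-CR gauge choices rather than merely formal rewritings.
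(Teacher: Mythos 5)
Your proof is correct and follows essentially the same route as the paper: Proposition \ref{repp} to get the normal form with $p(x,y,a_1,a_2)$, the substitution $y^1=p$ and inversion for $a_1$, the gauge transformation of $\nu_1$ yielding $y^2$ as in \nn{y22}, and then the identical step one level up for $\nu_2$ under the hypothesis $\partial y^2/\partial a_2\neq 0$. The two non-vanishing claims you flag are exactly the points the paper also extracts from $\lambda\dz\mu\dz\nu_1\dz\nu_2\neq 0$, so there is nothing to add.
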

The nongeneric case in which (\ref{sd}) is not satisfied can be
realized in several ways. The simplest of them is if $\frac{\partial
  y^2}{\partial a_2}\equiv 0$ in the neighbourhood ${\mathcal U}\subset M$. In such a case we
have  
$$\lambda=\der y-y^1\der x,\quad\quad\mu=\der x,\quad\quad\nu_1=\der
y^1-q(x,y,y^1)\der x,\quad\quad\nu_2=\der a_2,$$
and locally ${\mathcal U}={\mathcal U}_3\times \bbR$, where ${\mathcal
U}_3$, parametrized by $(x,y,y^1)$, is equipped with a canonical
$(1,1,1)$ type para-CR structure of the second order ODE
$y''=q(x,y,y')$. Thus in such a case the type $(1,1,2)$ para-CR
structure is obtained by extending the canonical $(1,1,1)$ type
para-CR structure of the equation $y''=q(x,y,y')$, from the first jet
space $\mathcal J$ with the canonical forms $\lambda=\der y-y^1\der x$,
$\mu=\der x$, $\nu_1=\der y^1-q(x,y,y^1)\der x$ to the Cartesian product ${\mathcal
  J}\times \bbR\stackrel{\pi}{\to}\mathcal J$. If $\bbR$ in ${\mathcal
  J}\times \bbR$ is parametrized by $a_2$, then the type $(1,1,2)$
para-CR structure on ${\mathcal
  J}\times \bbR$ is given by the class of
para-CR forms $[\pi^*(\lambda),\pi^*(\mu),\pi^*(\nu_1),\nu_2=\der
  a_2]$. So also in this nongeneric case the para-CR structure
$(M,[\lambda,\mu,\nu_1,\nu_2])$ is related to the canonical para-CR
structure of an ODE, the only difference with the generic case is that
now, the ODE is of lower order.

This discussion shows that, the structure of type $(1,1,2)$ para-CR
manifolds may change from point to point: in some regions it is
locally equivalent to a para-CR structure of a third order ODE, in
some regions, to a para-CR which is Cartesian product of a para-CR
structure of second order ODE and a real line.

To illustrate the discussion of this section we consider the following  example.\begin{example}\label{ecx}
Consider $\bbR^4$ parametrized by $(x,y,a_1,a_2)$ and a type $(1,1,2)$
para-CR structure on it given in terms of a function $$p(x,y,a_1,a_2)=x
a_1+y a_2.$$ By this we mean that the para-CR structure is defined in
terms of the class of para-CR 1-forms $[\lambda,\mu,\nu_1,\nu_2]$ with
representatives
\be\lambda=\der y-(xa_1+ya_2)\der x,\quad\quad\mu=\der
x,\quad\quad\nu_1=\der a_1,\quad\quad\nu_2=\der a_2.\label{rex}\ee
Proceeding as in our discussion above we define $y^1=x a_1+y a_2$,
solve it for $a_1$, $$a_1=\frac{y^1-y a_2}{x},$$
and use $(x,y,y^1,a_2)$ as new coordinates, in which
$$\lambda=\der y-y^1\der x,\quad\quad\mu=\der
x,\quad\quad\nu_1=\der[\frac{y^1-y a_2}{x}] ,\quad\quad\nu_2=\der a_2.$$
Now because $\nu_1=\frac{a_2 y-y^1}{x^2}\der x-\frac{a_2}{x}\der
y+\frac{\der y^1}{x}-\frac{y}{x}\der a_2$, we can replace $\nu_1$ by a
new form 
$$\nu_1=\der y^1-\frac{a_2y^1x+y^1-a_2 y}{x}\der x.$$
Introducing the function 
\be 
y^2=a_2y^1+\frac{y^1-a_2y}{x},\label{y2}\ee
we see that we are in the situation $\frac{\partial y^2}{\partial
  a_2}=y^1-\frac{y}{x}\neq 0$. So we can solve for $a_2$ obtaining:
$$a_2=\frac{y^2 x-y^1}{y^1x -y}.$$
Using the coordinates $(x,y,y^1,y^2)$ we get
$$\lambda=\der y-y^1\der x,\quad\quad\mu=\der x,\quad\quad\nu_1=\der
y^1-y^2\der x,\quad\quad\nu_2=\der[\frac{y^2 x-y^1}{y^1x -y}].$$
Expanding the differential we have
\begin{eqnarray*}
&&\nu_2=\frac{x}{y^1 x-y}\der y^2+\frac{y-y^2 x^2}{(y^1x -y)^2}\der
  y^1+\frac{(y^1)^2-y^2y}{(y^1x-y)^2}\der
  x+\frac{y^2x-y^1}{(y^1x-y)^2}\der y\sim\\
&&\frac{x}{y^1x-y}\Big(\der y^2+\frac{y^2(y^1-y^2 x)}{y^1x-y}\der x\Big).
\end{eqnarray*}
This means that locally the starting para-CR structure is equivalent
to
$$\lambda=\der y-y^1\der x,\quad\quad\mu=\der x,\quad\quad\nu_1=\der
y^1-y^2\der x,\quad\quad\nu_2=\der
y^2-\frac{y^2(y^2x-y^1)}{y^1x-y}\der x,$$
and thus it comes from the third order ODE
\be y'''=\frac{y''(y''x-y')}{y'x-y}.\label{odex}\ee
To solve this equation we may use our result on local
embeddability. We can start with any representation of the class
$[\lambda,\mu,\nu_1,\nu_2]$, then find an embedding, and finally interpret it as a
general solution to (\ref{odex}). It turns out that the simplest
calculations are in the representation (\ref{rex}):

Obviously the two independent solutions $(f_1,\tilde{f}_1)$ of 
the embedding equations $\der f\dz\lambda\dz\mu\equiv 0$ are $f_1=x$
and $\tilde{f}_1=y$. Also, two independent solutions of the embedding
equation $\der h\dz\lambda\dz\nu_1\dz\nu_2\equiv \der h\dz(\der y-(x a_1+y a_2)\der x)\dz\der a_1\dz\der a_2\equiv
0$ are obviously $h_1=a_2$ and
$h_2=a_2$. The third independent solution of this equation can
be taken as: $\tilde{h}_1={\rm e}^{-a_2
  x}\Big(y+\frac{a_1}{a_2}x+\frac{a_1}{a_2^2}\Big)$. Thus the
embedding is given by:
$$\bbR^4\ni(x,y,a_1,a_2)\to(x,y,a_0,a_1,a_2)=(x,y,{\rm e}^{-a_2
  x}(y+\frac{a_1}{a_2}x+\frac{a_1}{a_2^2}),a_1,a_2)\in\bbR^{2+3},$$
which is a hypersurface in $\bbR^5$ with coordinates
$(x,y,a_0,a_1,a_2)$, given by 
$a_0{\rm e}^{a_2 x}=y+\frac{a_1}{a_2}x+\frac{a_1}{a_2^2}$. 
It is easy to check that, magically, 
$$y=a_0{\rm e}^{a_2x}-\frac{a_1}{a_2}x-\frac{a_1}{a_2^2}$$
is the general solution to (\ref{odex}). 

We end this example with a comment that if we had started with a
function $p(x,y,a_1,a_2)=x a_1$, then our procedure would change after
equation (\ref{y2}). In such case, the function $y^2$ would be
independent of $a_2$ everywhere, and we would end up with 
$$\lambda=\der y-y^1\der x,\quad\quad\mu=\der x,\quad\quad\nu_1=\der
y^1-\frac{y^1}{x}\der x,\quad\quad\nu_2=\der a_2.$$
Thus the $(1,1,2)$ type para-CR structure $[\lambda=\der y-x a_1\der x,\mu=\der
  x,\nu_1=\der a_1,\nu_2=\der a_2]$ would be equivalent to a Cartesian
product of the canonical type $(1,1,1)$ para-CR structure of the second
order ODE $y''=\frac{y'}{x}$, and the real line represented by $a_2$. 
\end{example}
\section{Para-CR structures of type $(n-1,1,1)$}\label{n111}
Returning to Example \ref{el1}, and using the contact 1-forms
(\ref{cf}) defining the canonical para-CR structure of type
$(1,1,n-1)$ corresponding to an ODE
$y^{(n)}=F(x,y,y',\dots,y^{(n-1)})$, we can define another para-CR
structure on the space $\mathcal J$ of $(n-1)$ jets. This para-CR
structure is of type $(n-1,1,1)$, and is obtained from the contact
forms
$(l_1=\lambda,l_2=\nu_1,\dots,l_{n-1}=\nu_{n-2},m=\mu,n=\nu_{n-1})$ as
in (\ref{cf}) by extending them to a class
$[l_1,\dots,l_{n-1},m,n]$ via
\begin{eqnarray}
&&l_i\to l_i'=a_{ij}l_j,\nonumber\\
&&m\to m'=f m+b_il_i,\nonumber\\
&&n\to n'=h n+c_i l_i\quad\quad i,j=1,\dots,n-1\nonumber,\nonumber
\end{eqnarray}
where the functions $a_{ij}$, $b_i$, $c_i$, $f$, $h$ on $\mathcal J$
satisfy $\det (a)f h\neq 0$. Since this para-CR structure has $\dim
H^+=\dim H^-=1$, the integrability conditions $[H^\pm,H^\pm]\subset
H^\pm$ are automatically satisfied here.

\begin{example}
It is instructive to examine this para-CR structure in case of
$n=3$. In such case we have 
\be l_1=\der y-y^1\der x,\quad l_2=\der
y^1-y^2\der x,\quad n=\der y^2-F(x,y,y^1,y^2)\der x, \quad m=\der
x\label{qi}
\ee and
\begin{eqnarray}
&&l_1'=a_{11} l_1+a_{12}l_2,\nonumber\\
&&l_2'=a_{21} l_1+a_{22}l_2,\label{cipa}\\
&&n'=h n+c_1l_1+c_2l_2,\nonumber\\
&&m'=f m+b_1l_1+b_2 l_2,\nonumber
\end{eqnarray}
We now consider a \emph{contact} transformation
$(x,y,y^1,y^2)\to(\bar{x},\bar{y},\bar{y}^1,\bar{y}^2)$ of the
variables of the corresponding third order ODE
$y^{(3)}=F(x,y,y',y'')$. This changes the ODE to a new form 
$\bar{y}^{(3)}=\bar{F}(\bar{x},\bar{y},\bar{y}',\bar{y}'')$. It
follows that, if we
started with this equation and calculated the corresponding forms 
$(\bar{l}_1,\bar{l}_2,\bar{m},\bar{n})$ as in (\ref{qi}), then these
forms would be expressible in terms of forms (\ref{qi}) via
\begin{eqnarray}
&&\bar{l}_1=a_{11} l_1,\nonumber\\
&&\bar{l}_2=a_{21} l_1+a_{22}l_2,\label{cip}\\
&&\bar{n}=h n+c_1l_1+c_2l_2,\nonumber\\
&&\bar{m}=f m+b_1l_1+b_2 l_2,\nonumber
\end{eqnarray}
with functions $a_{ij}$, $b_i$, $c_i$, $f$ and $h$ 
which would depend on the particular form of the contact 
transformation we considered, and which would satisfy $\det(a)f
h\neq 0$. Although transformation (\ref{cip}) seems to be more restrictive than
the one in (\ref{cipa}), it turns out that they are equivalent. Actually,
it follows that starting with a general transformation (\ref{cipa})
and forms (\ref{qi}) there is unique way of killing $a_{12}$ in
(\ref{cipa}). This is done by observing that the most general forms
$(l_1',l_2',m',n')$ from (\ref{cipa}) satisfy 
$$\der l_1'\dz l_1'\dz l_2'=\frac{a_{12}}{fh}m'\dz n'\dz l_1'\dz l_2'.$$
Thus we can alsways normalize the transformation (\ref{cipa}) to one in
which $a_{12}=0$. This proves the following proposition.  
\end{example}
\begin{proposition}
The local geometry of the type $(2,1,1)$ para-CR structure defined in
(\ref{qi})-(\ref{cipa}) is identical to the local geometry of a
general third order ODE $y^{(3)}=F(x,y,y',y'')$ considered modulo
\emph{contact} transformation of variables.
\end{proposition}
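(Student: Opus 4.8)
The plan is to deduce the Proposition directly from the computations in the preceding Example, by matching two equivalence problems: para-CR equivalence of type $(2,1,1)$ structures, governed by the gauge (\ref{cipa}), and contact equivalence of third order ODEs, governed by (\ref{cip}). Since every structure of the stated form already arises from an equation $y^{(3)}=F(x,y,y',y'')$ through the canonical coframe (\ref{qi}), it suffices to show that two such equations are related by a prolonged contact transformation if and only if the associated para-CR structures are para-CR equivalent.

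For the direction \emph{contact $\Rightarrow$ para-CR} I would invoke the observation recorded in the Example: a contact transformation of the first jet variables $(x,y,y^1)$, prolonged to the second jet space $(x,y,y^1,y^2)$, carries the coframe (\ref{qi}) of one equation to the coframe of the contact-transformed equation via a change of the form (\ref{cip}). As (\ref{cip}) is the special case of (\ref{cipa}) with $a_{12}=0$, contact equivalent equations automatically carry para-CR equivalent structures. The structural reason for the vanishing $l_2$-term in $\bar l_1=a_{11}l_1$ is that a contact transformation preserves the contact line $\langle l_1\rangle$ on the first jet space.

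For the converse, \emph{para-CR $\Rightarrow$ contact}, a para-CR equivalence is realized by a diffeomorphism $\Phi$ under which the target coframe pulls back to the source coframe according to the full gauge (\ref{cipa}), a priori with $a_{12}\neq 0$. Here I would use the normalization from the Example. The distribution $H=H^+\oplus H^-$ is the common annihilator of $l_1,l_2$, and its derived flag $H+[H,H]$ is the rank-$3$ distribution annihilated by $l_1$ alone; hence the line $\langle l_1\rangle$ is an intrinsic invariant of the para-CR structure and is preserved by $\Phi$. Aligning the coframe with this derived line is exactly the statement that one can kill $a_{12}$, as the identity
$$\der l_1'\dz l_1'\dz l_2'=\frac{a_{12}}{fh}\,m'\dz n'\dz l_1'\dz l_2'$$
makes quantitative. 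In the normalized coframe the relating gauge reduces to (\ref{cip}).

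It then remains to read the normalized relation geometrically, which is the main obstacle. Because $\Phi$ preserves $H^-=\langle\partial_{y^2}\rangle$, the fiber of the projection $(x,y,y^1,y^2)\mapsto(x,y,y^1)$, it descends to a diffeomorphism of the first jet space; because it preserves the derived line $\langle l_1\rangle$, the descended map preserves the contact form $\der y-y^1\der x$ and is therefore a contact transformation; and because it preserves $H^+$, the direction tangent to prolonged solutions, the transformed coframe is again of the canonical type (\ref{qi}) for some new right-hand side $\bar F$. Thus $\Phi$ is the prolongation of a contact transformation, and the two equivalence problems coincide. The delicate point throughout is precisely this identification — confirming that the algebraic normalization $a_{12}=0$ reflects a genuine prolonged contact transformation rather than a mere formal gauge choice — which rests on recognizing $\langle l_1\rangle$ as the derived flag of $H$ and $\langle\partial_{y^2}\rangle$ as the fiber of the second-over-first jet projection.
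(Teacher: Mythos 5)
Your proposal is correct, and its core is the same as the paper's: the step that makes everything work is the normalization killing $a_{12}$, justified by the identity $\der l_1'\dz l_1'\dz l_2'=\tfrac{a_{12}}{fh}\,m'\dz n'\dz l_1'\dz l_2'$, which is precisely the computation the paper displays. Your derived-flag reading of that identity (the line $\langle l_1\rangle$ is the annihilator of $H+[H,H]$, hence a para-CR invariant, so any equivalence automatically pulls $\bar l_1$ back to a multiple of $l_1$) is the intrinsic content of the paper's formula and is a genuine clarification. Where you diverge is after the normalization: the paper stops there, implicitly identifying ``coframes (\ref{qi}) modulo the gauge (\ref{cip})'' with ``third order ODEs modulo contact transformations'' as Chern's classical formulation of the contact equivalence problem, whereas you try to manufacture the contact transformation directly from the para-CR diffeomorphism $\Phi$ by descending along $H^-$. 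That extra step is worthwhile --- it proves what the paper leaves to the literature --- but it is also where your argument has a gap.

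The gap is the final inference: ``because it preserves $H^+$ \dots\ thus $\Phi$ is the prolongation of a contact transformation'' is a non sequitur. Preservation of $H^+$ buys you only that $\Phi$ carries the direction field $\Span(D_F)$, $D_F=\partial_x+y^1\partial_y+y^2\partial_{y^1}+F\partial_{y^2}$, to $\Span(D_{\bar F})$; it does not force $\Phi$ to be a prolongation. What you need is a uniqueness-of-lift lemma: a diffeomorphism of the second jet space which covers a contact transformation $\phi$ of $(x,y,y^1)$ and preserves the canonical contact system $\Span\{l_1,l_2\}$ (equivalently, preserves $H$) must equal the prolongation $\phi^{(1)}$. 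This is short to supply: $\Psi=(\phi^{(1)})^{-1}\circ\Phi$ covers the identity, so $\Psi(x,y,y^1,y^2)=(x,y,y^1,\psi)$, and then $\Psi^*l_2=\der y^1-\psi\,\der x\in\Span\{l_1,l_2\}$ forces $\psi=y^2$, i.e.\ $\Phi=\phi^{(1)}$. With this lemma in place, the roles separate correctly: preservation of $H$ (not $H^+$) makes $\Phi$ a prolonged contact transformation, and preservation of $H^+$ then says $\phi^{(1)}_*\Span(D_F)=\Span(D_{\bar F})$, i.e.\ $\phi$ is a contact equivalence of the two ODEs. After that insertion your proof closes and is complete.
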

The geometry described by the above proposition was studied by Chern
\cite{chern} in the context of ODEs, and by Tanaka \cite{tanaka} in
the context of para-CR structures. Actually Tanaka in \cite{tanaka}
showed that the natural geometry associated with an $n$-th order ODE
$y^{(n)}=F(x,y,y',\dots,y^{(n-1)})$, considered modulo \emph{contact} transformations, is the geometry of
type $(n-1,1,1)$ para-CR structures, which he called
\emph{pseudo-product} structures. 
\begin{remark}
It is interesting to note that the passage from a $(1,1,n-1)$ para-CR
structure to a type $(n-1,1,1)$ para-CR structure, in the context
of para-CR structures associated with an ODE
$y^{(n)}=F(x,y,y',\dots,y^{(n-1)})$, corresponds to the passage from
the geometry of an ODE given modulo \emph{point} transformations to
the 
geometry of an ODE given modulo \emph{contact} transformations. This
is a first instance of a more general phenomenon, which will be
discussed in Section \ref{321}.
\end{remark}
\section{Invariants}
We are interested in objects naturally associated with a given para-CR 
manifold which are not changed under (local) para-CR diffeomorphisms. We call
such 
objects (local) \emph{invariants}. Clearly the simplest invariants of a 
para-CR manifold 
$(M,[(\lambda_1,\dots,\lambda_k,\mu_1,$ $\dots,\mu_r,\nu_1,\dots,\nu_s)])$ 
are the integers $(k,r,s)$. If $k=1$, we have also another obvious
invariant. This is defined as follows:

\begin{remark}\label{vr}
Note that the canonical $(1,1,n-1)$ type 
para-CR structures corresponding to $n$-th
order ODEs satisfy $\der\lambda\dz\lambda\neq 0$ and
$\der\lambda\dz\der\lambda\dz\lambda\equiv 0$. These  conditions are  
invariant under para-CR transformations, since any such transformation brings 
$\lambda\to\lambda'=a\lambda$, with some $a\neq 0$. If we have a
general $(1,r,s)$ type para-CR structure, a simple local
invariant, is the \emph{rank} of the para-CR form $\lambda$, i.e. the integer
$t$, such that
$$\underbrace{\der\lambda\dz\dots\dz\der\lambda}_{t~\mathrm{times}}\dz\lambda\neq 0\quad\quad
{\rm and}\quad\quad\underbrace{\der\lambda\dz\dots\dz\der\lambda}_{(t+1)~\mathrm{times}}\dz\lambda\equiv 0.$$
\end{remark}

Immediately there are two questions:
\begin{itemize}
\item Are the numbers $(k,r,s)$, (or $t$ when $k=1$), the \emph{only}
  local invariants of $(M,[(\lambda_1,\dots,\lambda_k,\mu_1,\dots,\mu_r,\nu_1,$ $\dots,\nu_s)])$? 
\item And if the answer to the above question is negative, 
how does one construct the system of \emph{all} local invariants of
$(M,[(\lambda_1,\dots,\lambda_k,\mu_1,\dots,\mu_r,\nu_1,\dots,\nu_s)])$? 
\end{itemize}

It is rather obvious that the answer for the first question above is
`no'. We are thus led to discuss how to construct the invariants. We
do not make an exhaustive discussion in the following. Instead we
concentrate on low dimensional cases, producing invariants for
structures of type $(1,1,2)$ and $(1,2,3)$. These examples are
complicated enough to illustrate the basic features that the general
case can have.
\subsection{Local invariants for para-CR structures of type
  $(1,1,2)$}
In Section \ref{111} we proved that every para-CR structure of
type $(1,1,1)$ for which $\der\lambda\dz\lambda\neq 0$, is locally para-CR
equivalent to a second order ODE considered modulo point
transformation of variables. Thus all local invariants for such para-CR
structures are in one-to-one correspondence with the local invariants of 
second oder ODEs considered modulo point transformations. All such
invariants are known since the times of the classical papers of Lie
\cite{lieodes}, Tresse \cite{tresse} and Cartan \cite{cartanode}. We
refer an interested reader to the para-CR treatment of these
invariants in \cite{nurspar}. Since we will need some results about 
the $(1,1,1)$ case in the following, we quote them here for completeness. 
\subsubsection{Brief summary of the $(1,1,1)$ case} As we know (see
Proposition \ref{repp}, or the proof of Proposition \ref{vry}) every
para-CR structure $(M,[\lambda,\mu,\nu])$ of type $(1,1,1)$ with
$\der\lambda\dz\lambda\neq 0$ can be locally represented by
$$\lambda=\der y-p(x,y,a_1)\der x,\quad\quad\mu=\der
x,\quad\quad\nu=\der a_1,$$
with a function $p=p(x,y,a_1)$ of variables $(x,y,a_1)$ on
$M$ such that $p_1=\frac{\partial p}{\partial a_1}\neq 0$. Consider 
now the most general forms
$(\theta^0,\theta^1,\theta^3)\in[\lambda,\nu,\mu]$ in the class 
$[\lambda,\nu,\mu]$. They are given on $M$ by:
$$\theta^0=a \lambda,\quad\quad
\theta^1=c_1\lambda+h_{11}\nu,\quad\quad\theta^3=b\lambda+f\mu,$$
with some functions $a$, $h_{11}$, $c_1$, $f$ and $b$ 
such that $a h_{11}f\neq 0$ 
(the strange numbering of the forms will become clear in the next
section). Extending the manifold $M$ to $M\times G$, where $G$ is
parametrized by $(a,h_{11},c_1,f,b)$, we can apply Cartan's
equivalence method to find the invariants of such structures. This was
done by Cartan in \cite{cartanode}. His result adapted to our
situation is summarized in the following proposition.    
\begin{proposition}\label{psss}Every para-CR manifold $(M,[\lambda,\mu,\nu])$ of
  type $(1,1,1)$ with $\der\lambda\dz\lambda\neq 0$ uniquely defines
  an 8-dimensional manifold $P$ with a unique coframe
  $(\theta^0,\theta^1,\theta^3,$ $\Om_1,\Om_2,\Om_3,\Om_7,\Om_8)$ on it, which
  satisfies the following equations
\begin{eqnarray}
&&\der\theta^0=\Om_1\dz\theta^0+\theta^3\dz\theta^1\nonumber\\
&&\der\theta^1=\Om_2\dz\theta^0+\Om_3\dz\theta^1\nonumber\\
&&\der\theta^3=(\Om_1-\Om_3)\dz\theta^3+\Om_7\dz\theta^0\nonumber\\
&&\der\Om_1=2\Om_8\dz\theta^0+\Om_7\dz\theta^1-\Om_2\dz\theta^3\nonumber\\
&&\der\Om_2=(\Om_3-\Om_1)\dz\Om_2+\Om_8\dz\theta^1+K\theta^0\dz\theta^3\label{sss}\\
&&\der\Om_3=\Om_8\dz\theta^0+2\Om_7\dz\theta^1+\Om_2\dz\theta^3\nonumber\\
&&\der\Om_7=\Om_7\dz\Om_3+\Om_8\dz\theta^3+J\theta^0\dz\theta^1\nonumber\\
&&\der\Om_8=\Om_8\dz\Om_1+\Om_7\dz\Om_2+\tfrac{\partial J}{\partial \theta^3} \theta^1\dz\theta^0+\tfrac{\partial K}{\partial \theta^1}\theta^3\dz\theta^0.\nonumber
\end{eqnarray}
Here the functions $J$ and $K$ are given by:
\begin{eqnarray*}
&&(6fh_{11}^3p_1^4)~J=\\&&
-15 p_{11}^3 p_{x1}+10 p_1 p_{11} p_{111} p_{x1}
p_{x1}+15 p_1 p_{11}^2 p_{x11}-4 p_1^2 p_{111} p_{x11}+\\&&12 p_1^2 p_{11}^2 p_{y1}-15 p p_{11}^3 p_{y1}-4 p_1^3
p_{111} p_{y1}+10 p p_1 p_{11} p_{111} p_{y1}-\\&&12 p_1^3 p_{11}
p_{y11}+15 p p_1 p_{11}^2 p_{y11}-4 p p_1^2 p_{111} p_{y11}-6 p_1^2 
p_{11} p_{x111}+\\&&4
p_1^2(p_1^2-\tfrac32 p  p_{11})p_{y111}-p_1^2(1+p p_{y1}) p_{1111}+p_1^3 (p_{x1111}+p p_{y1111})
\end{eqnarray*}
and 
\begin{eqnarray*}
&&(6f^3h_{11}p_1^4)~K=\\&&
-15 p_{11} p_{x1}^3+15 p_1 p_{x1}^2 p_{x11}+10 p_1 p_{11} p_{x1} p_{xx1}-4 p_1^2 p_{x11} p_{xx1}-\\&&6 p_1^2 p_{x1} p_{xx11}-p_1^2 p_{11} p_{xxx1}+p_1^3 p_{xxx11}-2 p_1^4 p_{xxy1}-3 p p_1^2 p_{11} p_{xxy1}+\\&&3 p p_1^3 p_{xxy11}-p_1^2 p_{11} p_{x1} p_{xy}+p_1^3 p_{x11} p_{xy}-3 p_1^2 p_{11} p_x p_{xy1}+6 p_{1}^3 p_{x1} p_{xy1}+\\&&20 p p_1 p_{11} p_{x1} p_{xy1}-8 p p_1^2 p_{x11} p_{xy1}+3 p_1^3 p_x p_{xy11}-12 p p_1^2 p_{x1} p_{xy11}+2 p_1^5 p_{xyy}-\\&&4 p p_1^4 p_{xyy1}-3 p^2 p_1^2 p_{11} p_{xyy1}+3 p^2 p_1^3 p_{xyy11}+10 p_1 p_{11} p_{x1}^2 p_y-10 p_1^2 p_{x1} p_{x11} p_{y}-\\&&3 p_{1}^2 p_{11} p_{xx1} p_{y}+3 p_{1}^3 p_{xx11} p_{y}-6 p_{1}^4 p_{xy1} p_{y}-9 p p_{1}^2 p_{11} p_{xy1} p_{y}+9 p p_{1}^3 p_{xy11} p_{y}-\\&&2 p_{1}^2 p_{11} p_{x1} p_{y}^2+2 p_{1}^3 p_{x11} p_{y}^2+10 p_{1} p_{11} p_{x} p_{x1} p_{y1}-6 p_{1}^2 p_{x1}^2 p_{y1}-45 p p_{11} p_{x1}^2 p_{y1}-\\&&4 p_{1}^2 p_{x} p_{x11} p_{y1}+30 p p_{1} p_{x1} p_{x11} p_{y1}-p_{1}^2 p_{11} p_{xx} p_{y1}+2 p_{1}^3 p_{xx1} p_{y1}+\\&&10 p p_{1} p_{11} p_{xx1} p_{y1}-6 p p_{1}^2 p_{xx11} p_{y1}-2 p_{1}^4 p_{xy} p_{y1}-3 p p_{1}^2 p_{11} p_{xy} p_{y1}+\\&&10 p p_{1}^3 p_{xy1} p_{y1}+20 p^2 p_{1} p_{11} p_{xy1} p_{y1}-12 p^2 p_{1}^2 p_{xy11} p_{y1}-4 p_{1}^2 p_{11} p_{x} p_{y} p_{y1}+\\&&8 p_{1}^3 p_{x1} p_{y} p_{y1}+30 p p_{1} p_{11} p_{x1} p_{y} p_{y1}-14 p p_{1}^2 p_{x11} p_{y} p_{y1}-4 p_{1}^4 p_{y}^2 p_{y1}-\\&&6 p p_{1}^2 p_{11} p_{y}^2 p_{y1}+2 p_{1}^3 p_{x} p_{y1}^2+10 p p_{1} p_{11} p_{x} p_{y1}^2-12 p p_{1}^2 p_{x1} p_{y1}^2-\\&&45 p^2 p_{11} p_{x1} p_{y1}^2+15 p^2 p_{1} p_{x11} p_{y1}^2+10 p p_{1}^3 p_{y} p_{y1}^2+20 p^2 p_{1} p_{11} p_{y} p_{y1}^2-6 p^2 p_{1}^2 p_{y1}^3-\\&&15 p^3 p_{11} p_{y1}^3-6 p_{1}^2 p_{x} p_{x1} p_{y11}+15 p p_{1} p_{x1}^2 p_{y11}+p_{1}^3 p_{xx} p_{y11}-4 p p_{1}^2 p_{xx1} p_{y11}+\\&&3 p p_{1}^3 p_{xy} p_{y11}-8 p^2 p_{1}^2 p_{xy1} p_{y11}+4 p_{1}^3 p_{x} p_{y} p_{y11}-16 p p_{1}^2 p_{x1} p_{y} p_{y11}+6 p p_{1}^3 p_{y}^2 p_{y11}-\\&&10 p p_{1}^2 p_{x} p_{y1} p_{y11}+30 p^2 p_{1} p_{x1} p_{y1} p_{y11}-20 p^2 p_{1}^2 p_{y} p_{y1} p_{y11}+15 p^3 p_{1} p_{y1}^2 p_{y11}-\\&&2 p_{1}^4 p_{x1} p_{yy}-p p_{1}^2 p_{11} p_{x1} p_{yy}+p p_{1}^3 p_{x11} p_{yy}+4 p_{1}^5 p_{y} p_{yy}-4 p p_{1}^4 p_{y1} p_{yy}-\\&&2 p^2 p_{1}^2 p_{11} p_{y1} p_{yy}+2 p^2 p_{1}^3 p_{y11} p_{yy}-2 p_{1}^4 p_{x} p_{yy1}-3 p p_{1}^2 p_{11} p_{x} p_{yy1}+6 p p_{1}^3 p_{x1} p_{yy1}+\\&&10 p^2 p_{1} p_{11} p_{x1} p_{yy1}-4 p^2 p_{1}^2 p_{x11} p_{yy1}-8 p p_{1}^4 p_y p_{yy1}-6 p^2 p_{1}^2 p_{11} p_{y} p_{yy1}+\\&&8 p^2 p_{1}^3 p_{y1} p_{yy1}+10 p^3 p_{1} p_{11} p_{y1} p_{yy1}-4 p^3 p_{1}^2 p_{y11} p_{yy1}+3 p p_{1}^3 p_{x} p_{yy11}-\\&&6 p^2 p_{1}^2 p_{x1} p_{yy11}+6 p^2 p_{1}^3 p_{y} p_{yy11}-6 p^3 p_{1}^2 p_{y1} p_{yy11}+2 p p_{1}^5 p_{yyy}-2 p^2 p_{1}^4 p_{yyy1}-\\&&p^3 p_{1}^2 p_{11} p_{yyy1}+p^3 p_{1}^3 p_{yyy11},
\end{eqnarray*}
and $\tfrac{\partial J}{\partial \theta^3}$ and $\tfrac{\partial
  K}{\partial \theta^1}$ denote the coframe derivatives of functions
$J$ and $K$ with respect to the coframe element $\theta^3$ and
$\theta^1$, respectively.
 
Two type $(1,1,1)$ para-CR manifolds $(M,[\lambda,\mu,\nu])$ and
$(M',[\lambda',\mu',\nu'])$, with $\der\lambda\dz\lambda\neq 0$ and
$\der\lambda'\dz\lambda'\neq 0$ are locally para-CR equivalent iff
there exists a local diffeomorphism $\phi:P\to P'$, of the
corresponding $8$-manifolds $P$ and $P'$, which pulls back the coframe
$(\theta'^0,\theta'^1,\theta'^3,$
$\Om'_1,\Om'_2,\Om'_3,\Om'_7,\Om'_8)$ 
to $(\theta^0,\theta^1,\theta^3,$ $\Om_1,\Om_2,\Om_3,\Om_7,\Om_8)$.  
\end{proposition}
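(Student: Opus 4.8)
The plan is to run Cartan's equivalence method on the lifted coframe and then invoke the rigidity of $\{e\}$-structures for the equivalence claim. I would begin from the canonical local representative $\lambda=\der y-p\der x$, $\mu=\der x$, $\nu=\der a_1$ provided by Proposition \ref{vry} (here $\der\lambda\dz\lambda\neq 0$ is exactly $p_1\neq 0$), and write the most general admissible coframe $\theta^0=a\lambda$, $\theta^1=c_1\lambda+h_{11}\nu$, $\theta^3=b\lambda+f\mu$ with group parameters $(a,h_{11},c_1,f,b)$, $ah_{11}f\neq 0$. Lifting these to the $8$-dimensional bundle $P=M\times G$ sets the stage: the three semi-basic forms together with the five group directions are the candidates for the final coframe, and since $\dim P=\dim M+\dim G=3+5=8$ already matches the target dimension, one expects the method to produce a canonical Cartan connection on $P$ rather than a genuine reduction of $G$.

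First I would record the seed torsion on $M$: since $\der\mu=\der\nu=0$ and $\der\lambda=p_y\,\mu\dz\lambda+p_1\,\mu\dz\nu$, pulling back and differentiating $\theta^0,\theta^1,\theta^3$ expresses each $\der\theta^i$ as a combination of $\theta^j\dz\theta^k$ and wedges of the semi-basic forms with the Maurer--Cartan forms $\der a/a$, $\der h_{11}/h_{11}$, and so on, of $G$. The central bookkeeping step is Cartan's torsion absorption: one adds semi-basic corrections to these group $1$-forms, defining modified connection forms, so as to remove every absorbable torsion coefficient. The coefficient of $\theta^3\dz\theta^1$ in $\der\theta^0$ is essential and nonzero, so I would normalize it to $1$, giving $\der\theta^0=\Om_1\dz\theta^0+\theta^3\dz\theta^1$; this uniquely pins down the connection form $\Om_1$. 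Iterating absorption and normalization, each surviving essential-torsion coefficient is either set to a constant, which uniquely determines one of $\Om_2,\Om_3,\Om_7,\Om_8$, or survives as a genuine functional invariant.

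Carried to completion, the five group directions are replaced by uniquely determined forms $\Om_1,\Om_2,\Om_3,\Om_7,\Om_8$, yielding an $\{e\}$-structure on $P$ whose structure equations are precisely (\ref{sss}). Differentiating these and imposing $\der\der\equiv 0$ (the Bianchi identities) forces the remaining essential torsion to be controlled by two scalar functions: $J$, appearing in $\der\Om_7$, and $K$, appearing in $\der\Om_2$, with all other torsion determined by them together with their coframe derivatives $\partial J/\partial\theta^3$ and $\partial K/\partial\theta^1$ (this is why those derivatives surface in $\der\Om_8$). The explicit polynomial formulas for $J$ and $K$ then follow by back-substituting the normalized parameter values into the absorbed torsion --- the computation carried out by Cartan in \cite{cartanode}.

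The equivalence statement is the standard corollary of the method. Because $P$ carries a canonical coframe whose only free structure functions are $J$, $K$ and their coframe derivatives, the general rigidity theorem for $\{e\}$-structures says that two such coframings are locally equivalent iff there is a diffeomorphism pulling one back to the other. A para-CR equivalence of $(M,[\lambda,\mu,\nu])$ and $(M',[\lambda',\mu',\nu'])$ lifts canonically to such a $\phi:P\to P'$ because every normalization in the construction was para-CR invariant; conversely any coframe-preserving $\phi$ descends, since $\theta^0,\theta^1,\theta^3$ span the pullback of the para-CR class and their span is intrinsic. The main obstacle is entirely computational: checking that the successive normalizations close the system exactly in the form (\ref{sss}) and, above all, producing the explicit expression for $K$ --- a polynomial of very high degree in the jets of $p$ --- which realistically must be verified with computer algebra.
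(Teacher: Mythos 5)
Your proposal is correct and takes essentially the same approach as the paper: the paper likewise passes to the canonical representative $\lambda=\der y-p\der x$, $\mu=\der x$, $\nu=\der a_1$ of Proposition \ref{vry}, forms the lifted coframe $\theta^0=a\lambda$, $\theta^1=c_1\lambda+h_{11}\nu$, $\theta^3=b\lambda+f\mu$ on the $8$-dimensional manifold $M\times G$ with $G$ parametrized by $(a,h_{11},c_1,f,b)$, and runs Cartan's equivalence method, deferring the explicit normalizations and the formulas for $J$ and $K$ to Cartan's computation in \cite{cartanode}. Your write-up simply makes explicit the absorption/normalization steps and the $\{e\}$-structure rigidity argument that the paper leaves implicit in that reference.
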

In particular the vanishing of each of the functions $J$ and $K$ is a
para-CR invariant property. These functions are para-CR versions of
the classical two point invariants $w_1$ and $w_2$ (see
\cite{nurspar}) of the corresponding second order ODE, which were known to Lie and
Tresse \cite{lieodes,tresse}. This proposition solves the local
equivalence problem for type $(1,1,1)$ para-CR structures: they are
either locally equivalent to $[\lambda=\der y,\mu=\der x,\nu=\der
  a_2]$, or they are described by the above proposition.
\subsubsection{The simplest relative invariant for type $(1,1,2)$}\label{sse}
Passing to the $(1,1,2)$ case we consider a para-CR structure
$(M,[\lambda,\mu,\nu_1,\nu_2])$, and since all para-CR structures with
$\der\lambda\dz\lambda\equiv 0$ are locally equivalent to
$(\bbR^{(1+1+2)},[\lambda=\der y,\mu=\der x,\nu_1=\der a_1,\nu_2=\der
  a_2])$, we will assume
$\der\lambda\dz\lambda\neq 0$ in the following. As at the begining of
Section \ref{112} we may introduce a local coordinate system
$(x,y,a_1,a_2)$ on $M$ so that the para-CR
structure is represented by 
$$\lambda=\der y-p(x,y,a_1,a_2)\der x,\quad\quad\mu=\der x,\quad\quad\nu_1=\der
a_1,\quad\quad\nu_2=\der a_2.$$ 
Here $p$ is an appropriate function $p=p(x,y,a_1,a_2)$ on $M$ which satisfies $\der x\dz\der y\dz\der
p\neq 0$. Without loss of generality we can assume in the following that
$$p_1=\frac{\partial p}{\partial a_1}\neq 0.$$ Now we introduce the most
general forms $(\lambda',\mu',\nu'_1,\nu'_2)$ from the class
$[\lambda,\mu,\nu_1,\nu_2]$. These are:
\be
\bma
\la\\
\nu_1\\
\nu_2\\
\mu
\ema\to\bma
\la'\\
\nu_1'\\
\nu_2'\\
\mu'
\ema =\bma
a&0&0&0\\
c_{1}&h_{11}&h_{12}&0\\
c_{2}&h_{21}&h_{22}&0\\
b&0&0&f
\ema
\bma
\la\\
\nu_1\\
\nu_2\\
\mu
\ema\stackrel{\rm def}{=}\bma
\theta^0\\
\theta^1\\
\theta^2\\
\theta^3
\ema .\label{uf2}
\ee
Now we are in a position to determine the first relative invariant. We
do it using Cartan's equivalence method (see e.g. \cite{olver}) in the
following steps:
\begin{enumerate}
\item \label{nmb1} We first calculate the invariant form
  $\der\theta^0\dz\theta^0$. This is given by 
$$\der\theta^0\dz\theta^0=\frac{a(h_{22}p_1-h_{21}p_2)}{f(h_{12}h_{21}-h_{11}h_{22})}\theta^0\dz\theta^1\dz\theta^3+\frac{a(h_{11}p_2-h_{12}p_1)}{f(h_{12}h_{21}-h_{11}h_{22})}\theta^0\dz\theta^2\dz\theta^3.$$
 (Here and in the following the
  partial derivatives with respect to $a_i$ are denoted by a subscript
  $i$ at the differentiated function; derivatives with respect to $x$
  and $y$ are denoted by the respective subscript $x$ or $y$.)
\item Then we impose the invariant condition
  $\der\theta^0\dz\theta^0=-\theta^0\dz\theta^1\dz\theta^3$. This is
  achieved by taking 
\be
a=
  \frac{h_{11}f}{p_1}\quad\quad{\rm and}\quad\quad h_{12}=\frac{h_{11}p_2}{p_1}.\label{fo}\ee
\item Then, on an 11-dimensional manifold $M^{(1)}$ parametrized by
  $(x,y,a_1,a_2,c_1,c_2,$ $h_{11},h_{21},h_{22},b,f)$,  we introduce a 
1-form $\Omega_1$ so that we have 
$$\der\theta^0=\Om_1\dz\theta^0+\theta^3\dz\theta^1.$$
The form $\Omega_1$ is given by 
\be
\Om_1=\frac{\der f}{f}+\frac{\der h_{11}}{h_{11}}-\frac{\der
  p_1}{p_1}+\frac{b p_1}{h_{11} f}\theta^1+\frac{h_{11}
  p_y-c_1 p_1}{h_{11} f}\theta^3+f_0 \theta^0,\label{f0}\ee
where $f_0$ is an additional function on $M^{(1)}$. 
\item\label{nbm} It
  is easy to check that at this stage we have 
$$\der\theta^1\dz\theta^0\dz\theta^1=
-I\frac{h_{11}}{p_1(h_{22}p_1-h_{21}p_2)f}\theta^0\dz\theta^1\dz\theta^2\dz\theta^3,$$
where $$I=p_1(p_{x2}+p p_{y2})-p_2(p_{x1}+p p_{y1}).$$ 
Comparing this with 
$$\det \bma
a&0&0&0\\
c_{1}&h_{11}&h_{12}&0\\
c_{2}&h_{21}&h_{22}&0\\
b&0&0&f
\ema=(h_{22}p_1-h_{21}p_2)f^2h_{11}^2p_1^{-1}\neq 0,$$
we see that the condition that $I$ vanishes or not is a para-CR
invariant property of the class $[\lambda,\mu,\nu_1,\nu_2]$. This
shows that $I$ is a \emph{relative invariant} for the considered para-CR
structure. 
\item For
example if $I\neq 0$ in the considered neighborhood, we can
normalize $\der\theta^1\dz\theta^0\dz\theta^1$ to
$\der\theta^1\dz\theta^0\dz\theta^1=-\theta^0\dz\theta^1\dz\theta^2\dz\theta^3$,
by choosing 
$$h_{11}=\frac{(h_{22}p_1-h_{21}p_2)p_1f}{I}.$$
Such a normalization is obviously impossible if $I=0$ in the
considered region. 
\item It can be checked that it is this invariant that distinguishes
  between the $(1,1,2)$ para-CR structures that correspond to the
  extension of $(1,1,1)$ type structures by $\bbR$ and the type
  $(1,1,2)$ para-CR structures equivalent to the canonical para-CR
  structures corresponding to third order ODEs.
\end{enumerate}
\subsubsection{Branch $I\neq 0$} Actually, further application of Cartan's equivalence method proves 
the following theorem.
\begin{theorem}\label{53}
Every type $(1,1,2)$ para-CR structure $(M,[\lambda,\mu,\nu_1,\nu_2])$
with $\der \lambda\dz\lambda\neq 0$ for which the invariant $I$ is non
vanishing, is
locally para-CR equivalent to a canonical para-CR structure of a
certain 
point equivalence class of 3-rd order ODEs $y'''=F(x,y,y',y'')$.
\end{theorem}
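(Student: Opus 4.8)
The plan is to avoid any fresh equivalence computation and instead reduce Theorem~\ref{53} to the coordinate criterion already isolated in the Proposition at the end of Section~\ref{112}. That Proposition shows that once a type $(1,1,2)$ structure with $\der\lambda\dz\lambda\neq 0$ is put in the normal form
\[
\lambda=\der y-y^1\der x,\quad \mu=\der x,\quad \nu_1=\der y^1-y^2\der x,\quad \nu_2=\der a_2,
\]
the single condition $\partial y^2/\partial a_2\neq 0$ suffices to straighten $\nu_2$ into $\der y^2-F\der x$ and thereby realize the structure as the canonical para-CR structure of a third-order ODE $y'''=F(x,y,y',y'')$. Hence the whole content of the theorem is the claim that the relative invariant $I$ of Section~\ref{sse} is, up to a nonvanishing factor, precisely this obstruction $\partial y^2/\partial a_2$.

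First I would take the representative furnished by Proposition~\ref{repp},
\[
\lambda=\der y-p(x,y,a_1,a_2)\der x,\quad \mu=\der x,\quad \nu_1=\der a_1,\quad \nu_2=\der a_2,
\]
in which $\der\lambda\dz\lambda\neq 0$ forces $\der x\dz\der y\dz\der p\neq 0$, so at least one of $p_1,p_2$ is nonzero; assume $p_1\neq 0$. Following Section~\ref{112}, set $y^1=p$ and solve for $a_1=a_1(x,y,y^1,a_2)$. Implicit differentiation of $y^1=p$ gives $\partial a_1/\partial x=-p_x/p_1$, $\partial a_1/\partial y=-p_y/p_1$, $\partial a_1/\partial y^1=1/p_1$ and $\partial a_1/\partial a_2=-p_2/p_1$. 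Substituting the first three into the formula~(\ref{y22}) collapses it to
\[
y^2=p_x+y^1p_y=p_x+pp_y,
\]
the total derivative of $p$ along $\partial_x+p\partial_y$.

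The crux is then one line. Differentiating $y^2=p_x+y^1p_y$ in the coordinates $(x,y,y^1,a_2)$ and inserting $\partial a_1/\partial a_2=-p_2/p_1$,
\[
\frac{\partial y^2}{\partial a_2}=\big(p_{x1}+pp_{y1}\big)\Big(-\frac{p_2}{p_1}\Big)+\big(p_{x2}+pp_{y2}\big)=\frac{1}{p_1}\Big[p_1(p_{x2}+pp_{y2})-p_2(p_{x1}+pp_{y1})\Big]=\frac{I}{p_1},
\]
with $I$ exactly the quantity of Section~\ref{sse}. Since $p_1\neq 0$, the hypothesis $I\neq 0$ is equivalent to $\partial y^2/\partial a_2\neq 0$, and the Proposition of Section~\ref{112} then applies verbatim: solving $y^2=q(x,y,y^1,a_2)$ for $a_2$, passing to coordinates $(x,y,y^1,y^2)$, and replacing $\nu_2$ by a representative $\der y^2-F\der x$ exhibits the structure as the canonical para-CR structure of $y'''=F$, which is the desired local equivalence.

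The hard part is conceptual rather than computational: one must check that the relative invariant $I$, produced in Section~\ref{sse} as a normalization coefficient inside Cartan's reduction on the prolonged bundle $M^{(1)}$, coincides up to the recorded nonzero factor with the coordinate expression $p_1(p_{x2}+pp_{y2})-p_2(p_{x1}+pp_{y1})$ evaluated on the present representative, so that the invariant hypothesis ``$I\neq 0$'' transfers correctly to $\partial y^2/\partial a_2\neq 0$. One should also confirm that the identity $\partial y^2/\partial a_2=I/p_1$ is insensitive to the residual para-CR gauge used to reach the normal form, ensuring that the dichotomy $I\neq 0$ versus $I=0$ (the latter yielding the Cartesian-product structures of Section~\ref{112}) is well posed. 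Everything else is the Frobenius and implicit-function-theorem bookkeeping already carried out in the earlier Proposition.
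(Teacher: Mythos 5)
Your proof is correct, but it takes a genuinely different route from the paper's. The paper obtains Theorem \ref{53} by pressing on with Cartan's equivalence method in the branch $I\neq 0$ (after the normalization of $h_{11}$ in item (5) of Section \ref{sse}, the reduction is continued until the structure is matched against the coframe of a third-order ODE modulo point transformations); the link between $I$ and the coordinate dichotomy of Section \ref{112} is only asserted there, in item (6), as something that ``can be checked''. What you do instead is check it: the identity
$$\frac{\partial y^2}{\partial a_2}=\frac{1}{p_1}\Big[p_1(p_{x2}+pp_{y2})-p_2(p_{x1}+pp_{y1})\Big]=\frac{I}{p_1}$$
is exactly right (your implicit-differentiation bookkeeping is correct, and it can be confirmed on Example \ref{ecx}, where $p=xa_1+ya_2$ gives $y^2=a_2y^1+(y^1-a_2y)/x$ and $\partial y^2/\partial a_2=y^1-y/x=I/p_1$ with $I=xp-y$), and with it Theorem \ref{53} follows verbatim from the Proposition at the end of Section \ref{112}, since $p_1\neq 0$ makes $I\neq 0$ equivalent to $\partial y^2/\partial a_2\neq 0$. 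The two checks you flag as the ``hard part'' are indeed the right ones to worry about, and both are already in place: nonvanishing of $I$ is a gauge-independent (relative-invariant) property by item (4) of Section \ref{sse}, and the choice $p_1\neq 0$ versus $p_2\neq 0$ is harmless because swapping $a_1\leftrightarrow a_2$ only flips the sign of $I$. As for what each approach buys: yours is elementary, self-contained, and makes the paper's item (6) claim precise, giving the shortest honest proof of the equivalence statement; the paper's Cartan-theoretic route, though left implicit, produces along the way the full coframe and the higher invariants (the W\"unschmann relative invariant, the data used for the classification of homogeneous models, and the machinery reused for Theorem \ref{54} in the branch $I\equiv 0$), which your shortcut does not provide.
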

In particular, if $I\neq 0$, all the local invariants of such 
para-CR structures are identical with the local
\emph{point} invariants of the corresponding point equivalence classes of 3rd order
ODEs. For example the lowest order relative invariant, next after
$I$, is the W\"unschmann invariant \cite{wun} of the corresponding class of
ODEs. This can be written explicitly in terms of the function
$p=p(x,y,a_1,a_2)$ used above. Although we calculated this invariant in
terms of $p$ we do not display it here. It is given by quite a lengthy
and complicated expression in terms of $p$ and its derivatives up to
the 5th order. 

The above proposition enables us to find the para-CR
structures with $I\neq 0$ and large symmetry groups. Since third order
ODEs with large symmetry groups of point symmetries are
classified in \cite{godphd,nurgod1}, we know that 
such para-CR manifolds have a maximal group of para-CR symmetries of
dimension \emph{seven}. They are locally para-CR equivalent to the
para-CR structure corresponding to the point equivalent class of
the simple equation $y'''=0$. The $I\neq 0$ para-CR structures with a 
group of symmetries of dimension $6$, $5$ and $4$ are also easily 
obtained from the results of \cite{godphd,nurgod1}. We have the 
following proposition.
\begin{proposition}
All homogeneous type $(1,1,2)$ para-CR structures are locally para-CR
equivalent to the canonical para-CR structure of the following point
equivalent classes of 3rd order ODEs:
\begin{itemize}
\item $y'''=0$; in this case the symmetry algebra is
  $\coa(2,1)\oplus\bbR^3$ of dimension 7;
\item $y'''=\tfrac32\frac{(y'')^2}{y'}$; symmetry algebra $\oa(2,2)$ of
dimension 6;
\item $y'''=\frac{3(y'')^2y'}{1+(y')^2}$; symmetry algebra $\oa(4)$ of
  dimension 6;
\item $y'''=-2\mu y'+y$; each $\mu\in\bbR$ defines a nonequivalent
  para-CR structure with a 5-dimensional symmetry algebra, with
  generators $V_i$ satisfying $[V_1,V_4]=-\mu V_2+V_3$,
  $[V_1,V_5]=V_1$, $[V_2,V_4]=V_1-\mu V_3$, $[V_2,V_5]=V_2$
  $[V_3,V_4]=V_2$, $[V_3,V_5]=V_3$;
\item $y'''=(y'')^3$; symmetry algebra of dimension 4 with generators
  $V_i$ satisfying $[V_1,V_4]=2V_1$, $[V_2,V_4]= \tfrac43 V_2$,
  $[V_2,V_3]=V_1$, $[V_3,V_4]=-\tfrac23 V_3$; 
\item $y'''=\mu\frac{(y'')^2}{y'}$; here each $\mu>\tfrac32$ such that
  $\mu\neq 3$, defines a nonequivalent para-CR structure having a 4-dimensional symmetry algebra,
  with generators $V_i$ satisfying $[V_1,V_2]=V_1$, $[V_3,V_4]=V_3$;
\item $y'''=\frac{3y'+\mu}{1+(y')^2}$; for each $\mu>0$ we have a
  nonequivalent para-CR structure with a 4-dimensional symmetry algebra;
  its generators $V_i$ satisfy $[V_1,V_2]=V_3$, $[V_3,V_1]=V_2$,
    $[V_3,V_4]=V_3$, $[V_2,V_4]=V_2$. 
\end{itemize} 
\end{proposition}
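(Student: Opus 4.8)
The plan is to dispose of this entirely by reducing to the point-equivalence classification of third order ODEs and then translating back into para-CR language; throughout we stay in the branch $I\neq 0$ fixed by the subsection. By Theorem \ref{53}, the hypothesis $I\neq 0$ guarantees that the structure $(M,[\lambda,\mu,\nu_1,\nu_2])$ is locally para-CR equivalent to the canonical para-CR structure of a point-equivalence class of third order ODEs $y'''=F(x,y,y',y'')$, realized on the four-dimensional jet space with coordinates $(x,y,y^1,y^2)$. The first step I would make explicit is that, since this canonical structure is \emph{defined} from the point-equivalence class of the ODE, its local para-CR automorphisms coincide with the point symmetries of the ODE; hence the para-CR symmetry algebra \emph{is} the point-symmetry algebra. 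Because $M$ is four-dimensional, homogeneity of the para-CR structure is then equivalent to the point-symmetry algebra acting locally transitively on the jet space, which forces its dimension to be at least $4$ (with isotropy of codimension $4$).

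Second, I would invoke the classification of third order ODEs modulo point transformations organized by the dimension of the point-symmetry algebra, which is exactly the content of \cite{godphd,nurgod1}. This enumerates all equations whose point-symmetry algebra has dimension $\geq 4$, the maximal dimension being seven and attained only by $y'''=0$ with symmetry algebra $\coa(2,1)\oplus\bbR^3$. Since homogeneity forces the symmetry dimension to be $\geq 4$, the homogeneous para-CR structures are in bijection with those entries of this list whose symmetry algebra acts transitively on the jet space. Extracting that sublist is intended to produce precisely the seven families displayed: the dimension-seven model $y'''=0$, the two dimension-six models with algebras $\oa(2,2)$ and $\oa(4)$, the one-parameter dimension-five family, and the three dimension-four families.

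Third, for each representative in the list I would write out the point-symmetry vector fields explicitly, check that they span the tangent space at a generic jet point (establishing transitivity, hence homogeneity), and read off the commutation relations quoted in the statement. For the continuous families (the $\mu$-dependent equations giving the five-dimensional case and two of the four-dimensional cases) I would verify that distinct values of $\mu$ yield genuinely inequivalent point-equivalence classes, and therefore inequivalent para-CR structures, by comparing point invariants: the Wünschmann invariant \cite{wun} together with its coframe derivatives furnishes $\mu$-dependent scalars that cannot be matched by any point transformation.

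The hard part will be twofold. First, completeness: one must be certain that the classification of \cite{godphd,nurgod1} captures \emph{every} homogeneous case, so that no homogeneous type $(1,1,2)$ para-CR structure with $I\neq 0$ escapes the list; this rests both on the exhaustiveness of the underlying Lie-type point-symmetry classification and on the identification (para-CR symmetry $=$ ODE point symmetry) set up in the first step, including the point that every infinitesimal point symmetry genuinely lifts to a para-CR symmetry of the jet-space model. Second, the inequivalence of the continuous families is delicate: showing that the parameter $\mu$ really labels distinct structures, rather than one removable by a point transformation, requires exhibiting a functionally independent relative invariant whose value depends on $\mu$, and this invariant computation—built on the Wünschmann quantity and its derivatives—is the technically demanding ingredient.
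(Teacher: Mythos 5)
Your proposal follows essentially the same route as the paper: the authors likewise invoke Theorem \ref{53} to identify homogeneous $I\neq 0$ structures with canonical para-CR structures of point-equivalence classes of third order ODEs, and then simply read off the list from the classification of third order ODEs by point-symmetry dimension in \cite{godphd,nurgod1}. Your additional detail (transitivity checks, identification of para-CR with point symmetries, and inequivalence of the $\mu$-families via Wünschmann-type invariants) is a fleshing-out of steps the paper leaves to those references, not a different argument.
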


\subsubsection{Branch $I\equiv 0$} This case is a bit easier to
describe explicitly than the above $I\neq 0$ case. Thus we choose
this case to present all the details of constructing invariants for
such para-CR structures, rather then those with $I\neq 0$. 

When constructing these invariants we proceed as follows: 

Starting with the defining forms $(l=\der y-p\der x,n_1=\der
a_1,n_2=\der a_2, m=\der x)$ as in (\ref{qi}), for which the function
$p=p(x,y,a_1,a_2)$ satisfies $$p_1\neq 0\quad\quad{\rm and}\quad\quad
I\equiv 0,$$ we consider the most general forms
$(\theta^0,\theta^1,\theta^2,\theta^3)$ from the class
$[l,n_1,n_2, m]$ as in (\ref{uf2}). Then we repeat the entire Cartan's
procedure for these forms we performed in Section \ref{sse} from item
(\ref{nmb1}) up to item (\ref{nbm}). After this we have forms
$\theta^0$ and $\theta^1$ normalized so that
$$\der\theta^0=\Om_1\dz\theta^0+\theta^3\dz\theta^1$$
and 
$$\der\theta^1\dz\theta^0\dz\theta^1=0.$$ 
This second equation holds since we assumed that $$I\equiv 0.$$ The form
$\Om_1$ is given by (\ref{f0}), and the normalizations for $a$ and
$h_{12}$ are as in (\ref{fo}). Continuing with Cartan's equivalence
method we now make the following steps:
\begin{itemize}
\item First we introduce forms $\Om_2$ and $\Om_3$ so that the form
  $\theta^1$ satisfies:
 $$\der\theta^1=\Om_2\dz\theta^0+\Om_3\dz\theta^1.$$
This defines forms $\Om_2$ and $\Om_3$ to be:
\begin{eqnarray*}
&&\Om_2=\frac{p_1\der c_1}{fh_{11}}-\frac{c_1p_1\der
  h_{11}}{fh_{11}^2}+\frac{c_1p(p_1p_{12}-p_{11}p_2)+h_{11}(p_1p_{x2}-p_2p_{x1})}{fh_{11}p(h_{22}p_1-h_{21}p_2)}\theta^2-\\&&
\frac{c_1p_1(c_1p_1-h_{11}p_y)}{f^2h_{11}^2}\theta^3+c_{10}\theta^0+c_{11}\theta^1,\\~\\
&&\Om_3=\der\log(h_{11})+\\&&\Big(\frac{c_{11}f^2h_{11}^2-bc_1p_1^2}{f^2h_{11}^2}+\frac{c_2p(p_1p_{12}-p_{11}p_2)+h_{21}(p_1p_{x2}-p_2p_{x1})}{fh_{11}p(h_{22}p_1-h_{21}p_2)}\Big)\theta^0+\\&&
\frac{p_{11}p_2-p_{12}p_1}{p_1(h_{22}p_1-h_{21}p_2)}\theta^2+\frac{c_1p_1}{fh_{11}}\theta^3+h_{111}\theta^1.
\end{eqnarray*}
As we see the forms $\Omega_2$ and $\Omega_3$ are defined modulo the
terms $\theta^0$ and $\theta^1$ (the form $\Om_2$), and $\theta^1$ (the
form $\Om_3$),
respectively. Thus to write them down in full generality one has to introduce additional
parameters $c_{10}$, $c_{11}$ and $h_{111}$.
\item At the next step we introduce forms $\Om_4$, $\Om_5$ and $\Om_6$
  such that the form $\theta^2$ satisfies
$$\der\theta^2=\Om_6\dz\theta^0-\Om_5\dz\theta^1+\Om_4\dz\theta^2.$$
These forms are defined as follows:
\begin{eqnarray*}
&&\Om_4=\frac{p_2\der h_{21}}{h_{21}p_2-h_{22}p_1}+\frac{p_1\der
    h_{22}}{h_{22}p_1-h_{21}p_2}+h_{220}\theta^0+h_{221}\theta^1+h_{222}\theta^2\\
&&\Om_5=-\frac{\der h_{21}}{h_{11}}+\frac{h_{21}}{h_{11}}\Om_4-\frac{(h_{11}h_{221}+h_{21}h_{222})}{h_{11}}\theta^2-\frac{c_2p_1}{fh_{11}}\theta^3+h_{210}\theta^0+h_{211}\theta^1\\
&&\Om_6=\frac{p_1}{fh_{11}}\der
  c_2-\frac{c_2p_1}{fh_{11}}\Om_4+\frac{c_1p_1}{fh_{11}}\Om_5-\\&&
\frac{f^2h_{11}(h_{11}h_{210}+h_{21}h_{220})+p_1\big(c_1f(h_{11}h_{211}+h_{21}h_{221})-c_2(bp_1+fh_{11}h_{221})\big)}{f^2h_{11}^2}\theta^1+\\&&
\Big(h_{220}+\frac{p_1(c_1h_{221}+c_2h_{222})}{fh_{11}}\Big)\theta^2+\frac{c_2p_1p_y}{f^2h_{11}}\theta^3-c_{20}\theta^0. 
\end{eqnarray*}
Here we had to introduce new parameters $h_{220}$, $h_{221}$,
$h_{222}$, $h_{210}$, $h_{211}$ and $c_{20}$, which take care of 
the undefined terms in the expressions for $\Om_4$, $\Om_5$, and $\Om_6$.
\item Analysing $\der\theta^3$ we first observe that
\begin{eqnarray*}
&&\der\theta^3\dz\theta^0=(\Om_3-\Om_1)\dz\theta^0\dz\theta^3+\\&&\frac{fh_{11}h_{111}(h_{22}p_1-h_{21}p_2)+h_{22}(fp_{11}-2bp_1^2)+h_{21}(2bp_1p_2-fp_{12})}{fh_{11}(h_{22}p_1-h_{21}p_2)}\theta^0\dz\theta^1\dz\theta^3.\end{eqnarray*}
This enables us to fix $h_{111}$:
$$h_{111}=\frac{h_{22}(2bp_1^2-fp_{11})+h_{21}(fp_{12}-2bp_1p_2)}{fh_{11}(h_{22}p_1-h_{21}p_2)}.$$
\item After this normalization an introduction of a form
\begin{eqnarray*}
&&\Om_7=\frac{p_1\der
    b}{fh_{11}}+\frac{bp_1}{fh_{11}}(\Om_3-\Om_1)+b_0\theta^0-\Big(c_{11}-f_0+\frac{2bc_1p_1^2}{f^2h_{11}^2}+\\&&\frac{b}{f^2h_{11}}(p_{x1}-2p_1p_y+pp_{y1})+\frac{c_2p(p_1p_{12}-p_2p_{11})+h_{21}(p_1p_{x2}-p_2p_{x1})}{fh_{11}p(h_{22}p_1-h_{21}p_2)}\Big)\theta^3,
\end{eqnarray*}
brings $\der\theta^3$ into the form:
$$\der\theta^3=\Om_7\dz\theta^0+(\Om_1-\Om_3)\dz\theta^3.$$
Again we had to introduce a new parameter which we denoted by $b_0$ here.
\end{itemize}
Summarizing our efforts in this section so far, we conclude that the
invariant forms $\theta^0,\theta^1,\theta^2,\theta^3$ of a
para-CR structure with $I\equiv 0$ can be gauged in such a way that they
have the following differentials:
\begin{eqnarray}
&&\der\theta^0=\Om_1\dz\theta^0+\theta^3\dz\theta^1\nonumber\\
&&\der\theta^1=\Om_2\dz\theta^0+\Om_3\dz\theta^1\label{ssya}\\
&&\der\theta^2=\Om_6\dz\theta^0-\Om_5\dz\theta^1+\Om_4\dz\theta^2\nonumber\\
&&\der\theta^3=\Om_7\dz\theta^0+(\Om_1-\Om_3)\dz\theta^3.\nonumber
\end{eqnarray}
Now we pass to the analysis of this system in terms of Cartan's
characters and Cartan's test for the involutivity 
(see \cite{olver}, pp. 350-355 for definitions; for a one page
description of the procedure see e.g. \cite{davepaw}, pp. 4066-4067). Since the forms
$\theta^i$ are given up to the action of the residual
$(r=7)$-dimensional group
parametrized by $f, b, c_1, c_2, h_{11}, h_{21}, h_{22}$, we easily
calculate the four Cartan characters associated to
this system. They are $s_1'=4$,
$s_2'=2$, $s_3'=1$, $s_4'=0$. Moreover, since the new forms
$\Om_1,\Om_2,\Om_3, \Om_4, \Om_5, \Om_6,\Om_7$ transversal to the
respective residual group directions $\partial_f$, $\partial_{c_1}$, 
$\partial_{h_{11}}$, $\partial_{h_{22}}$, $\partial_{h_{21}}$,
$\partial_{c_2}$, $\partial_b$, are determined modulo $r^{(1)}=10$
parameters $f_0$, $c_{10}$,
$c_{11}$, $h_{220}$, $h_{221}$,
$h_{222}$, $h_{210}$, $h_{211}$, $c_{20}$, $b_0$, we have 
$$1 s_1'+2 s_2'+3 s_3'+4 s_4'=11\neq 10=r^{(1)}.$$
Thus the system (\ref{ssya}) is \emph{not} involutive, and has to be
prolonged. Calculating
$\der\Om_1$, $\der\Om_2$, $\der\Om_3$, $\der\Om_7$ we fix $c_{10}$,
$c_{11}$ and $b_0$ in such a way that the forms $\Om_1$, $\Om_2$,
$\Om_3$ and $\Om_7$ satisfy:
\begin{eqnarray}
&&\der\Om_1=2\Om_8\dz\theta^0+\Om_7\dz\theta^1-\Om_2\dz\theta^3\nonumber\\
&&\der\Om_2=\Om_2\dz(\Om_1-\Om_3)+\Om_8\dz\theta^1+K\theta^0\dz\theta^3\label{ssyb}\\
&&\der\Om_3=\Om_8\dz\theta^0+2\Om_7\dz\theta^1+\Om_2\dz\theta^3\nonumber\\
&&\der\Om_7=\Om_7\dz\Om_3+\Om_8\dz\theta^3+J\theta^0\dz\theta^1.\nonumber
\end{eqnarray}
Here the form $\Om_8$ and functions $J$ and $K$ are totally
determined by the above equations. The form $\Om_8$ is given by:
\begin{eqnarray*}
&&2\Om_8=\der
  f_0+f_0\Om_1+(\frac{bp_1}{fh_{11}}+\frac{h_{21}p_{12}-h_{22}p_{11}}{h_{11}(h_{22}p_1-h_{21}p_2)})\Om_2-\frac{p_1p_{12}-p_{2}p_{11}}{p_1(h_{22}p_1-h_{21}p_2)}\Om_6-\\
&&\frac{c_1p_1^2+h_{11}p_{x1}-h_{11}p_1p_y+h_{11}pp_{y1}}{fh_{11}p_1}\Om_7+(\dots)\theta^0+(\dots)\theta^1+(\dots)\theta^2+(\dots)\theta^3,\end{eqnarray*}
where we skip writing down very compliceted, yet still \emph{totally
determined}, coefficients at the terms $\theta^0$, $\theta^1$,
$\theta^2$ and $\theta^3$. It turns out, \emph{and this is the result of our
calculations}, that the functions $J$ and $K$
are given by \emph{the same formulae} as in Proposition
\ref{psss}. This is not surprising, if one notices the identical forms
of the systems (\ref{ssya})-(\ref{ssyb}) and (\ref{sss}) with the
equation for $\der\theta^2$ and $\der\Om_8$ removed.
Actually, after calculating $\der\Om_8$ in the present situation, we
get 
\be
\der\Om_8=\Om_8\dz\Om_1+\Om_7\dz\Om_2+\tfrac{\partial J}{\partial
  \theta^3} \theta^1\dz\theta^0+\tfrac{\partial K}{\partial
  \theta^1}\theta^3\dz\theta^0,\label{ssyc}\ee
which again agrees with the system (\ref{sss}). 
Now we are ready to perform the Cartan analysis of the the composed 
system (\ref{ssya})-(\ref{ssyc}). We have here $m=4+5$ differentials
$\der\theta^0$, $\der\theta^1$, $\der\theta^2$, $\der\theta^3$,
$\der\Omega_1$, $\der\Omega_2$, $\der\Omega_3$, $\der\Omega_7$,
$\der\Omega_8$, of the forms $\theta^0$, $\theta^1$, $\theta^2$,
$\theta^3$, $\Omega_1$, $\Omega_2$, $\Omega_3$, $\Omega_4$,
$\Omega_5$, which are given modulo the $(r=3)$-dimensional residual
group parametrized by $c_2$, $h_{21}$ and $h_{22}$. The new forms
$\Om_4$, $\Om_5$ and $\Om_6$, 
transversal to the respective vector fields 
$\partial_{h_{22}}$, $\partial_{h_{21}}$ and $\partial_{c_2}$ are
given up to $r^{(1)}=6$ parameters $h_{220}$, $h_{221}$, $h_{222}$
($\Om_4$), $h_{210}$, $h_{211}$ ($\Om_5$), and $c_{20}$ ($\Om_6$). Simple linear algebra gives the following Cartan's characters of the
system (\ref{ssya})-(\ref{ssyc}): $s_1'=s_2'=s_3'=1$, $s_i'=0$ for all
$i=4,\dots 9$. Thus for this system we have
$$1s_1'+2s_2'+3s_3+4s_4'+5s_5'+6s_6'+7s_7'+8s_8'+9s_9'=6=r^{(1)},$$
and, hence, the system \emph{is} involutive. This result together with 
Cartan's Theorem 11.16, \cite{olver}, p. 367, tells us that there is 
no para-CR invariant information encoded in the forms 
$\Omega_4$, $\Om_5$ and $\Om_6$. Hence we can take them in the most simple representation
$\Om_4=\Om_5=\Om_6\equiv 0$. (Note that this can be achieved by
setting
$c_2=h_{21}=h_{22}=c_{20}=h_{210}=h_{211}=h_{220}=h_{221}=h_{222}=0$,
$\theta^2=\der a_2$. Cartan's theorem says also that we can
do it in many ways. Since we are in the involutive case, the local
group of para-CR symmetries is infinite dimensional; it depends on $s_{k=3}'=1$
arbitrary real function of $k=3$ variables.) Concluding we have the
following theorem. 
\begin{theorem}\label{54}
All type $(1,1,2)$ para-CR structures $(M,[\lambda,\mu,\nu_1,\nu_2])$
with $\der\lambda\dz\lambda\neq 0$, and with the invariant $I\equiv
0$, 
are locally equivalent to one of the para-CR structures $(M,[\lambda=\der
  y-p\der x, \nu_1=\der p-Q(x,y,p)\der x, \nu_2=\der a_2,\mu=\der
  x])$. Thus they are obtained by extending by
$\nu_2=\der a_2$ the
type $(1,1,1)$ para-CR structure defined by $[\lambda=\der
  y-p\der x, \nu_1=\der p-Q(x,y,p)\der x,\mu=\der
  x]$. All local invariants of such $(M,[\lambda,\mu,\nu_1,\nu_2])$ 
are given by the point invariants of
the corresponding point equivalence class of second order ODEs represented by
$y''=Q(x,y,y'')$. 
\end{theorem}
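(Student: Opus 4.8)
The plan is to read the theorem off the Cartan reduction already carried out for the branch $I\equiv 0$. The decisive structural observation is that, once $\theta^0$ and $\theta^1$ have been normalized so that $\der\theta^0=\Om_1\dz\theta^0+\theta^3\dz\theta^1$ and $\der\theta^1\dz\theta^0\dz\theta^1=0$, the invariant content of the structure lives entirely in the subcoframe $(\theta^0,\theta^1,\theta^3,\Om_1,\Om_2,\Om_3,\Om_7,\Om_8)$, whose structure equations (\ref{ssya})--(\ref{ssyc}) coincide \emph{verbatim} with the $(1,1,1)$ system (\ref{sss}) of Proposition \ref{psss}, including the identical functions $J$ and $K$. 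The forms carrying the extra $\nu_2$-direction, namely $\theta^2,\Om_4,\Om_5,\Om_6$, decouple from this subsystem, and the whole argument amounts to showing that they can be peeled off.

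First I would invoke the involutivity computation already performed: the Cartan characters of the prolonged system (\ref{ssya})--(\ref{ssyc}) are $s_1'=s_2'=s_3'=1$ with $s_i'=0$ for $i\geq 4$, while the residual freedom is $r^{(1)}=6$, so that $s_1'+2s_2'+3s_3'=6=r^{(1)}$ and the system \emph{is} involutive. By Cartan's Theorem (\cite{olver}, Thm.~11.16) no para-CR invariant information is encoded in $\Om_4,\Om_5,\Om_6$, hence these may be put into the simplest representative $\Om_4=\Om_5=\Om_6\equiv 0$. I would then make this normalization concrete: setting the residual parameters $c_2=h_{21}=h_{22}=c_{20}=h_{210}=h_{211}=h_{220}=h_{221}=h_{222}=0$ realizes $\Om_4=\Om_5=\Om_6\equiv 0$ and simultaneously forces $\theta^2=\der a_2$, a closed form transverse to the remaining coframe. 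Geometrically this says the $(1,1,2)$ structure is, locally, the Cartesian product of a structure on the complementary coordinates with the $a_2$-line.

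Dropping the split direction $\der a_2$ leaves the coframe $(\lambda,\nu_1,\mu)$ governed by exactly the equations (\ref{sss}); this is the defining coframe of a type $(1,1,1)$ para-CR structure with $\der\lambda\dz\lambda\neq 0$. By Proposition \ref{vry} such a structure is locally para-CR equivalent to the canonical structure of a second order ODE $y''=Q(x,y,y')$, realized on its first jet space with $\lambda=\der y-p\der x$ and $\nu_1=\der p-Q\der x$. Reinstating $\nu_2=\der a_2$ then yields the asserted normal form $(M,[\lambda,\ \nu_1=\der p-Q\der x,\ \nu_2=\der a_2,\ \mu=\der x])$. Finally, because the surviving invariant structure equations are literally those of the $(1,1,1)$ case with the same $J$ and $K$, every local para-CR invariant of the $(1,1,2)$ structure is a local point invariant of $y''=Q(x,y,y')$, which completes the identification of invariants.

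I expect the main obstacle to be the passage from ``no invariant information in $\Om_4,\Om_5,\Om_6$'' to an honest local product decomposition: one must verify that the vanishing normalization of these three forms is consistent with \emph{all} of (\ref{ssya})--(\ref{ssyc}) simultaneously and that the resulting $\theta^2$ is genuinely closed, so that $\der a_2$ really can be factored out. This is exactly what Cartan's involutivity theorem secures, but it is the step where the bookkeeping of the ten prolongation parameters against the residual group must be handled with care, since an inconsistent choice would spoil either the closedness of $\theta^2$ or the $(1,1,1)$ form of the remaining subsystem.
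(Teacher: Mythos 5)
Your proposal is correct and follows essentially the same route as the paper: the Cartan reduction of the $I\equiv 0$ branch to the system (\ref{ssya})--(\ref{ssyc}), the involutivity count $s_1'+2s_2'+3s_3'=6=r^{(1)}$ combined with Cartan's Theorem 11.16 to discard $\Om_4$, $\Om_5$, $\Om_6$ (normalizing $\Om_4=\Om_5=\Om_6\equiv 0$, $\theta^2=\der a_2$), and the identification of the surviving subsystem with the $(1,1,1)$ system (\ref{sss}) carrying the same invariants $J$ and $K$. The only cosmetic difference is that you invoke Proposition \ref{vry} explicitly to realize the residual $(1,1,1)$ structure as a second order ODE $y''=Q(x,y,y')$, a step the paper leaves implicit.
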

It is convenient to introduce the following definition.
\begin{definition}
A type $(1,1,2)$ para-CR manifold $(M,[\lambda,\mu,\nu_1,\nu_2])$
with $\der\lambda\dz\lambda\neq 0$ is regular if the 
invariant $I$ is either not equal to
zero in $M$ or it is zero everywhere in $M$.
\end{definition}

Now comparing Theorems \ref{53} and \ref{54} we obtain:
\begin{corollary}\label{cll}
All regular type $(1,1,2)$ para-CR manifolds 
$(M,[\lambda,\mu,\nu_1,\nu_2])$ with $\der\lambda\dz\lambda\neq 0$ are
locally equivalent either to canonical para-CR structures of point
equivalence classes of 3rd order ODEs (if $I\neq 0$), or to the
trivial extensions of the cananical para-CR structures of point
equivalent classes of 2nd order ODEs (if $I\equiv 0$).  
\end{corollary}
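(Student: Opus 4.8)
The plan is to recognize that this Corollary is a direct consequence of the invariant dichotomy encoded in the definition of \emph{regular} together with the two preceding theorems, so the only real work is a clean case split. First I would record why the two branches are well posed: by the computation in item (\ref{nbm}) of Section \ref{sse}, one has $\der\theta^1\dz\theta^0\dz\theta^1 = -I\,h_{11}\big(p_1(h_{22}p_1-h_{21}p_2)f\big)^{-1}\,\theta^0\dz\theta^1\dz\theta^2\dz\theta^3$, and the coefficient multiplying $I$ is nowhere zero (indeed $h_{11},f,p_1$ and $h_{22}p_1-h_{21}p_2$ are all nonzero, the last by the nonvanishing of the gauge determinant). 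Since $\theta^0\dz\theta^1\dz\theta^2\dz\theta^3$ is a genuine volume form and the $\theta^i$ are built from an arbitrary representative of the class, the condition $I=0$ at a point is independent of that representative. Hence ``$I\neq 0$'' and ``$I\equiv 0$'' are honest para-CR invariant alternatives.

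Next I would unwind the regularity hypothesis. By definition a regular type $(1,1,2)$ structure with $\der\lambda\dz\lambda\neq 0$ satisfies \emph{either} $I\neq 0$ at every point of $M$ \emph{or} $I\equiv 0$ throughout $M$. These two possibilities are mutually exclusive, and regularity is precisely the hypothesis that makes them exhaustive: it excludes the mixed behaviour---the para-CR type jumping from point to point---described at the end of Section \ref{112}, where $I$ could vanish on part of $M$ and not on the rest.

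The argument then closes by the two-way split. In the branch $I\neq 0$ I would invoke Theorem \ref{53} to conclude that $(M,[\lambda,\mu,\nu_1,\nu_2])$ is locally para-CR equivalent to the canonical para-CR structure of a point-equivalence class of third order ODEs $y'''=F(x,y,y',y'')$. In the branch $I\equiv 0$ I would instead invoke Theorem \ref{54}, which identifies the structure with the trivial extension by $\nu_2=\der a_2$ of the canonical type $(1,1,1)$ structure of a second order ODE $y''=Q(x,y,y')$, i.e. with the trivial extension of the canonical para-CR structure of a point-equivalence class of second order ODEs. Assembling the two mutually exclusive, exhaustive cases yields exactly the stated alternative.

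I do not expect any substantive obstacle: all of the analytic content sits inside Theorems \ref{53} and \ref{54}, and the Corollary merely organizes them along the invariant $I$. The single point meriting care is the one flagged above---checking that regularity really collapses the \emph{a priori} three possibilities (nowhere zero, identically zero, or genuinely mixed) to the two for which a uniform normal form holds on a whole neighbourhood, rather than only on the open set $\{I\neq 0\}$ and, separately, on the interior of $\{I=0\}$.
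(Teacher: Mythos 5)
Your proposal is correct and follows exactly the paper's route: the paper derives Corollary \ref{cll} simply by comparing Theorems \ref{53} and \ref{54}, with the regularity hypothesis supplying the dichotomy $I\neq 0$ everywhere versus $I\equiv 0$, which is precisely your case split. Your additional remarks on why the alternative is para-CR invariant (via the gauge-independent vanishing of $I$) and exhaustive under regularity are consistent with the paper's discussion in Section \ref{sse} and add only expository detail.
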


\subsection{Local invariants for para-CR structures of type
  $(1,1,n-1)$}
We believe that the situation described in Cotrollary \ref{cll} 
is typical for any regular type $(1,1,n-1)$ para-CR 
structures $(M,[\lambda,\mu,\nu_\alpha])$ with
$\der\lambda\dz\lambda\neq 0$ and any $n> 3$. By this we mean the
following. In the \emph{generic} case, such para-CR structures should be locally
equivalent to the canonical para-CR structures associated with point
equivalent classes of $n$th order ODEs. This generic case should be
distinguished by the simultaneous \emph{non}vanishing of a finite number
$t$ of relative invariants $(I_1,\dots,I_t)$, generalizing our
invariant $I$. These invariants should have some hierarchical
structure, so that if all invariants above some level, say $n_0$, in the
hierarchy identically vanish, then the para-CR structure is a trivial
extension of a canonical para-CR structure of type $(1,1,n-n_0-1)$, 
by adding $n_0$ forms $\nu_{n-1}=\der a_{n-1}$, $\dots$,
$\nu_{n-n_0}=\der a_{n-n_0}$ to the canonical contact forms
$[\la,\mu,\nu_1,\dots,\nu_{n-n_0-1}]$. Proving or disproving our
belief goes beyond this article.

\section{Relations with other differential equations}
Given a para-CR structure of type $(k,r,s)$ we consider its local
embedding in $\bbR^{(k+r)+(k+s)}$, as
in Theorem \ref{mbe}. The obtained codimension-$k$ submanifold
$\Sigma$ we intend to interprete as a general
solution of a certain system of differential equations. We know how to
do it in the case of para-CR structures of type $(1,1,n-1)$: in this case
$\Sigma$ describes the general solution of an $n$th
order ODE considered modulo point transformations of variables. In the
case 
of a general $(k,r,s)$ we expect that $\Sigma$ corresponds to the 
general solution of a \emph{system of ODEs}, or more
generally, to the general solution of a \emph{system of PDEs of finite
type}. 
\subsection{Systems of ODEs} 
Given a system of first order ODEs
\be
\frac{\der y^i}{\der x}=F^i(x,y^1,\dots,y^n),\quad\quad i=1,2,\dots n,
\ee
we consider its general solution 
$$y^i=\psi^i(x,a_0,a_1,\dots,a_{n-1}),\quad\quad i=1,2,\dots n,$$
where the constants $a_\mu$, $\mu=0,1,\dots,n-1$, are the constants of 
integration. This defines a codimension $n$ submanifold 
$$\Sigma=\{\bbR^{(n+1,n)}\ni(x,y^1,\dots,y^n,a_0,\dots,a_{n-1})~|~y^i=\psi^i\}$$
in $\bbR^{(n+1,n)}$, which aquires a para-CR structure from the
split $2n+1=(n+1)+n$ in the ambient  $\bbR^{(n+1,n)}$, given by the
linear operator 
$\kappa(\partial_\mu)=-\partial_\mu$, $\kappa(\partial_x)=\partial_x$,
$\kappa(\partial_{y^i})=\partial_{y^i}$. Interestingly
this para-CR structure is of type $(n,1,0)$. 

Indeed, the tangent space
${\rm T}\Sigma$ 
to $\Sigma$ is spanned by
\begin{eqnarray*}
&&X=\partial_x+\psi^i_x\partial_{y^i}\\
&&Z_\mu=\partial_\mu+\psi^i_\mu\partial_{y^i}.
\end{eqnarray*} 
Since $\kappa(Z_\mu)\cap{\rm T}\Sigma=\{0\}$, for all
$\mu=0,\dots,n-1$, and $\kappa(X)=X$, then  
$\kappa({\rm T}\Sigma)\cap{\rm T}\Sigma=H^+={\rm Span}(X)$, and
the $k(=n)$ codimensions of the $(n,1,0)$-type para-CR structure on
$\Sigma$ are 
spanned by the $n$ vectors $Z_\mu$.

Hence a typical representantive of para-CR structures of type $(n,1,0)$
is a \emph{system of $n$ first order ODEs for $n$ scalar functions of
  one variable, considered modulo point transformations of the
  variables}. The study of invariants of such para-CR structures, as well
as para-CR structures representing systems of ODEs of higher orders, will be 
performed elsewhere.

\subsection{PDEs of finite type}
Recall that the finite type property of a system of PDEs means that its 
most general solution depends on a finite number of
parameters. Instead of studying the para-CR structures associated with the most
general PDEs of finite type, in the next few sections we will study the 
para-CR structures of type $(1,2,3)$ and $(3,2,1)$. They include, 
as the simplest
example, the para-CR structure
corresponding to $z_{xx}=0$ $\&$ $z_{yy}=0$, i.e. a system of two PDEs for one real function $z=z(x,y)$ of two real
variables $x$ and $y$, with the general solution $z=a_0+a_1x+a_2y+a_3
xy$, depending on four real parameters $a_0$, $a_1$, $a_2$ and $a_3$.
Generalization of this example to the finite type PDEs of the form 
$z_{xx}=R(x,y,z,z_x,z_y,z_{xy})$ $\&$
$z_{yy}=T(x,y,z,z_x,z_y,z_{xy})$, provides examples of $(1,2,3)$ and
$(3,2,1)$ type
para-CR structures with very nice properties.

\section{Para-CR structures of type $(1,2,3)$}\label{buur}
\subsection{The flat model}\label{fm1}
Consider a pair of second order
PDEs 
\be z_{xx}=0\quad\quad\quad\&\quad\quad\quad z_{yy}=0,\label{et}\ee
for a real function $z=z(x,y)$ of two real variables $x$ and $y$. The
general solution for this system is clearly
\be z=a_0+a_1x+a_2y+a_3 xy.\label{ets}\ee
This means that the solution space of this system 
is 4-dimensional, and that its points are parametrized by 
${\bf a}=(a_0,a_1,a_2,a_3)\in \bbR^4$.
Thus we have here a 
generically embedded hypersurface 
$$\Sigma=\{\bbR^7\ni (x,y,z,a_0,a_1,a_2,a_3)~|~z=a_0+a_1x+a_2y+a_3
xy\},$$
in the `correspondence space' 
$\bbR^7=\bbR^3\times\bbR^4$, with the respective 
coordinates $(x,y,z)$ and $(a_0,a_1,a_2,a_3)$. 
The linear map $\kappa: \bbR^7\to\bbR^7$, such that
$$\kappa(x,y,z,a_0,a_1,a_2,a_3)=(x,y,z,-a_0,-a_1,-a_2,-a_3),$$ induces
a para-CR-structure of type $(1,2,3)$ on $\Sigma$. Indeed, the tangent
space to $\Sigma$ is spanned by 
\begin{eqnarray*}
&&X_1=a_1\partial_y-a_2\partial_x,\quad\quad X_2=a_2\partial_z+\partial_y,\\
&&Y_1=x\partial_0-\partial_1,\quad\quad
Y_2=y\partial_1-x\partial_2,\quad\quad Y_3=x\partial_2-\partial_3,\\
&&Z=\partial_z+\partial_0,
\end{eqnarray*}
and we have a $(k,r,s)=(1,2,3)$-type para-CR structure, with 
$k=1$ corresponding to $\Span(Z)$, $r=2$ corresponding to 
the eigenspace $H^+=\Span(X_1,X_2)$, and $s=3$ corresponding to the
eigenspace $H^-=\Span(Y_1,Y_2,Y_3)$. Obviously $H=H^+\oplus
H^-$. Any diffeomorphism of $\bbR^7$ of the form  
$$\Phi(x,y,z,a_i)=(\bar{x}(x,y,z),\bar{y}(x,y,z),\bar{z}(x,y,z),\bar{a}_i(a_j))$$ 
is, on the one hand, a para-CR diffeomorphism of the para-CR manifold
$\Sigma$, and on the other hand, can be interpreted as coming from a 
point transformation of the variables of the system (\ref{et}).   

Dually this para-CR manifold is defined on $\Sigma$ in terms of the forms
\begin{eqnarray}
&&\lambda=\der a_0+x\der a_1+y\der a_2+xy\der a_3\nonumber\\
&&\mu_1=\der x\nonumber\\
&&\mu_2=\der y,\label{pf1}\\
&&\nu_1=\der a_1\nonumber\\
&&\nu_2=\der a_2\nonumber\\
&&\nu_3=\der a_3\nonumber
\end{eqnarray}
given up to the transformation
\be
\bma
\la\\
\nu_1\\
\nu_2\\
\nu_3\\
\mu_1\\
\mu_2
\ema\to\bma
\la'\\
\nu_1'\\
\nu_2'\\
\nu_3'\\
\mu_1'\\
\mu_2'
\ema =\bma
a&0&0&0&0&0\\
b_{1}&f_{11}&f_{12}&f_{13}&0&0\\
b_{2}&f_{21}&f_{22}&f_{23}&0&0\\
b_{3}&f_{31}&f_{32}&f_{33}&0&0\\
c_{1}&0&0&0&h_{11}&h_{12}\\
c_{2}&0&0&0&h_{21}&h_{22}
\ema
\bma
\la\\
\nu_1\\
\nu_2\\
\nu_3\\
\mu_1\\
\mu_2
\ema\stackrel{\rm def}{=}\bma
\theta^4\\
\theta^1\\
\theta^2\\
\theta^3\\
\Om_3\\
\Om_2
\ema .\label{pf2}
\ee
In this formulation the question of local equivalence of a given
para-CR structure of type $(1,2,3)$ to the one defined by
(\ref{pf1})-(\ref{pf2}) can be solved by using \emph{Cartan's equivalence
method}, see e.g. \cite{olver}. Using it we get the following theorem.
\begin{theorem}\label{pfa}
The para-CR structure (\ref{pf1})-(\ref{pf2}) defines a \emph{unique} 
11-dimensional manifold $P$ on which the forms
$(\theta^1,\theta^2,\theta^3,\theta^4,\Om_2,\Om_3)$, as defined
in (\ref{pf2}), can be supplemented by the unique 1-forms
$(\Om_1,\Om_4, \Om_5,\Om_6,A)$ in such a way 
that the eleven 1-forms 
$(\theta^i,\Om_\mu, A)$, $i=1,2,3,4$, $\mu=1,2,3,4,5,6$, constitute a
coframe on $P$, and that they satisfy the exterior
differential system
\begin{eqnarray}
&&\der \theta^i+\Gamma^i_{~j}\dz\theta^j=0\label{sy1}\\
&&\der \Gamma^i_{~j}+\Gamma^i_{~k}\dz\Gamma^k_{~j}=0\label{sy11},
\end{eqnarray}  
with
\be\Gamma^i_{~j}=g^{ik}\Gamma_{kj},\quad\quad
\Gamma_{ij}=\Gamma_{[ij]}+\tfrac12 A g_{ij},\label{sy2}\ee
where
\be
g^{ik}=g_{ik}=\bma 0&1&0&0\\1&0&0&0\\0&0&0&1\\0&0&1&0
\ema,\label{sy3}\ee
and
\be
\Gamma_{[ij]}=\bma 0&\Om_1&\Om_2&\Om_4\\-\Om_1&0&\Om_3&\Om_5\\-\Om_2&-\Om_3&0&\Om_6\\-\Om_4&-\Om_5&-\Om_6&0\ema.\label{sy4}\ee
Moreover, if $(\bar{\Sigma}, [(\bar{\la},\bar{\mu}_1,\bar{\mu}_2,\bar{\nu}_1,\bar{\nu}_2,\bar{\nu}_3)])$ is an
arbitrary 6-dimensional para-CR structure of type $(1,2,3)$, then it is locally
para-CR-equivalent to the para-CR structure (\ref{pf1})-(\ref{pf2}) if
and only if its corresponding forms   
$$\bma
\theta^4\\
\theta^1\\
\theta^2\\
\theta^3\\
\Om_3\\
\Om_2\ema=\bma
\bar{a}&0&0&0&0&0\\
\bar{b}_{1}&\bar{f}_{11}&\bar{f}_{12}&\bar{f}_{13}&0&0\\
\bar{b}_{2}&\bar{f}_{21}&\bar{f}_{22}&\bar{f}_{23}&0&0\\
\bar{b}_{3}&\bar{f}_{31}&\bar{f}_{32}&\bar{f}_{33}&0&0\\
\bar{c}_{1}&0&0&0&\bar{h}_{11}&\bar{h}_{12}\\
\bar{c}_{2}&0&0&0&\bar{h}_{21}&\bar{h}_{22}
\ema=\bma
\bar{\la}\\
\bar{\nu}_1\\
\bar{\nu}_2\\
\bar{\nu}_3\\
\bar{\mu}_1\\
\bar{\mu}_2
\ema$$
can be suplemented by five 1-forms $(\Om_1,\Om_4,\Om_5,\Om_6,A)$ in
such a way that on some 11-dimensional manifold $\bar{P}$ they satisfy
the exterior differential system (\ref{sy1})-(\ref{sy4}). 
\end{theorem}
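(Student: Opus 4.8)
The plan is to apply \emph{Cartan's equivalence method} to the $G$-structure encoded in (\ref{pf2}), pushing the reduction all the way to an absolute parallelism, exactly as was done for the $(1,1,2)$ case in the previous section but now with a four-dimensional solder form. The class $[(\la,\mu_1,\mu_2,\nu_1,\nu_2,\nu_3)]$ is a coframe on the six-dimensional $\Sigma$ determined up to the $19$-parameter structure group $G$ of block matrices in (\ref{pf2}) (the scalar $a$, the $3\times 3$ block $f$ acting on the $\nu$'s, the $2\times 2$ block $h$ acting on the $\mu$'s, and the vectors $b,c$). First I would lift to the coframe $(\theta^1,\theta^2,\theta^3,\theta^4,\Om_3,\Om_2)$ on $\Sigma\times G$, compute the exterior derivatives $\der\theta^i$, and read off the torsion from the coefficients of the $\theta^j\dz\theta^k$ terms. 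The target is an $\{e\}$-structure on an eleven-dimensional bundle $P$, so the reduction must terminate with the lifted six forms completed by exactly five new one-forms.

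The core of the proof is the normalization loop. At each pass I would first absorb all torsion that can be soaked into redefinitions of the emerging connection forms, and then set the remaining \emph{essential} torsion coefficients to canonical constants, each such normalization fixing a batch of group parameters. I expect the split-signature pairing $g_{ij}$ of (\ref{sy3}) to appear as the normalized leading torsion of $\der\theta^4$ (the lift of $\la$): the product form $z=a_0+a_1x+a_2y+a_3xy$ of the flat solution is what forces the pairing $\theta^1\!\leftrightarrow\!\theta^4$, $\theta^2\!\leftrightarrow\!\theta^3$. Once $g$ is pinned down the residual connection must preserve it up to scale, which forces $\Gamma_{ij}$ into the $\coa(2,2)$ form (\ref{sy2})--(\ref{sy4}): an antisymmetric $\soa(2,2)$-part carrying $\Om_1,\dots,\Om_6$ together with a conformal trace carrying $A$. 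Since $\dim\coa(2,2)=7$ and two of its slots, $\Om_2$ and $\Om_3$, are already filled by the lifted $\mu_1,\mu_2$, precisely five genuinely new forms $(\Om_1,\Om_4,\Om_5,\Om_6,A)$ remain, giving $\dim P=6+5=11$. I expect this loop to be the \textbf{main obstacle}: computing the torsion through the $19$-parameter action, separating essential from absorbable torsion at each step, and checking that the procedure terminates exactly at the $\coa(2,2)$-reduction with the pairing (\ref{sy3}) rather than some competing normal form. The bookkeeping is heavy, but structurally it is the calculation already illustrated for $(1,1,2)$.

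For the flat model itself the normalization closes with \emph{vanishing} curvature. Differentiating the normalized $\theta^i$ yields the first structure equation (\ref{sy1}), and differentiating $\Gamma^i_{~j}$ yields the flat Maurer--Cartan relation $\der\Gamma^i_{~j}+\Gamma^i_{~k}\dz\Gamma^k_{~j}=0$ of (\ref{sy11}). These are precisely the Maurer--Cartan equations of the eleven-dimensional Lie group $\bbR^4\rtimes\mathbf{CO}(2,2)$, so $P$ is locally that group carrying its Maurer--Cartan form. Homogeneity of the flat model then forces uniqueness of the supplementary forms $(\Om_1,\Om_4,\Om_5,\Om_6,A)$: the closed system (\ref{sy1})--(\ref{sy11}) contains no free coefficient, so the forms solving it are rigid. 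This proves the first assertion.

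For the equivalence criterion I would run the same reduction on an arbitrary type $(1,2,3)$ structure. The torsion-absorption and group-reduction steps are purely algebraic consequences of the type $(1,2,3)$ and go through verbatim, again producing a $\coa(2,2)$-valued $\Gamma^i_{~j}$ of the form (\ref{sy2})--(\ref{sy4}) together with (\ref{sy1}); flatness is \emph{not} used here. The only place flatness can fail is the second structure equation, which in general reads $\der\Gamma^i_{~j}+\Gamma^i_{~k}\dz\Gamma^k_{~j}=\tfrac12 R^i_{~jkl}\,\theta^k\dz\theta^l$ with a curvature $R^i_{~jkl}$ assembled from the defining data. By Cartan's theorem on the equivalence of $\{e\}$-structures (\cite{olver}), the eleven-manifold $\bar P$ is locally coframe-equivalent to $P$ if and only if all its structure functions match those of $P$; since the latter are constant with $R\equiv 0$, this holds exactly when $R^i_{~jkl}\equiv 0$, i.e.\ exactly when the supplementary forms can be chosen so that (\ref{sy1})--(\ref{sy4}) hold with the flat right-hand side (\ref{sy11}). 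Such a coframe equivalence projects to a para-CR diffeomorphism $\bar\Sigma\to\Sigma$, which is the asserted biconditional.
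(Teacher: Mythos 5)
Your strategy for the first half of the theorem is the same as the paper's: lift the class (\ref{pf2}) by its $19$-parameter structure group, run Cartan's equivalence method, and terminate in an $11$-dimensional $\{e\}$-structure whose structure equations are the Maurer--Cartan equations of $\bbR^4\rtimes\cog(2,2)$. The paper does exactly this, and in addition certifies the outcome by exhibiting the eleven coframe forms explicitly in coordinates $(x,y,a_0,a_1,a_2,a_3,a,f_{11},f_{22},f_{31},f_{32})$, so that (\ref{sy1})--(\ref{sy4}) can be verified by direct computation; your outline stops short of the normalization loop that you yourself flag as the main obstacle, so it is a proof scheme rather than a proof, though the fiber count $6+5=11$ matches the paper's parametrization. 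One heuristic slip: (\ref{sy3}) pairs $\theta^1\leftrightarrow\theta^2$ and $\theta^3\leftrightarrow\theta^4$ (this is why $G=2(\theta^1\theta^2+\theta^3\theta^4)$ later descends to the metric), not $\theta^1\leftrightarrow\theta^4$, $\theta^2\leftrightarrow\theta^3$.

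The genuine gap is in your argument for the ``Moreover'' criterion. You assert that for an \emph{arbitrary} type $(1,2,3)$ structure the reduction goes through verbatim, yielding (\ref{sy1}) with a $\coa(2,2)$-valued $\Gamma^i_{~j}$ as in (\ref{sy2})--(\ref{sy4}), so that the only possible deviation from the flat model is curvature in the second structure equation. The paper itself refutes this: Theorem \ref{poj} shows that merely to achieve the consequences (\ref{uiu1})--(\ref{uiu5}) of the torsion-free equations one needs the obstructions $J_1\equiv J_2\equiv 0$ to vanish, and even then Theorem \ref{alb} finds irremovable torsion terms $t^1_{23}\,\theta^2\dz\theta^3$ and $t^2_{13}\,\theta^1\dz\theta^3$ in $\der\theta^1$, $\der\theta^2$ (equations (\ref{bla})). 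So a general structure exhibits essential \emph{torsion}, not just curvature, and your dichotomy does not exhaust the cases; both directions of your biconditional lean on this false claim. The repair is that the claim is not needed at all. If the barred forms can be supplemented so that (\ref{sy1})--(\ref{sy4}) hold on some $\bar P$, then $P$ and $\bar P$ carry coframes satisfying identical structure equations with \emph{constant} coefficients, so Cartan's theorem on $\{e\}$-structures provides a local diffeomorphism $\bar P\to P$ matching the coframes; since it matches $(\theta^i,\Om_2,\Om_3)$, it maps the fibres of $\bar P\to\bar\Sigma$ (the annihilator of those six forms) to the fibres of $P\to\Sigma$, hence descends to a map $\bar\Sigma\to\Sigma$ preserving the spans of $(\bar\la,\bar\nu_\alpha)$ and of $(\bar\la,\bar\mu_A)$, i.e.\ a para-CR equivalence. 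Conversely, if $\bar\Sigma$ is para-CR equivalent to the flat model, pulling back the coframe of $P$ along the lift of that equivalence supplies the required five forms on $\bar P$. Stated this way the ``Moreover'' part is complete without ever invoking a canonical reduction of an arbitrary structure.
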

\begin{proof}
The proof of this fact is a standard application of Cartan's method of
equivalence. It requires massive calculations to show that
the 1-forms (\ref{pf1})-(\ref{pf2}) can be \emph{uniquely} brought to
the form, in
which they satisfy (\ref{sy1})-(\ref{sy4}) with \emph{unique}
$(\Om_1,\Om_4,\Om_5,\Om_6,A)$. Actually Cartan's method of equivalence
constructs the manifold $P$ with a natural 
parametrization of $P$ by $(x,y,a_0,a_1,a_2,a_3,a,f_{11},f_{22},f_{31},f_{32})$, and 
gives, in an algorithmic
way, the explicit formulae for the coframe 1-forms 
$(\theta^i,\Om_\mu, A)$, $i=1,2,3,4$, $\mu=1,2,3,4,5,6$, which
correspond to (\ref{pf1})-(\ref{pf2}) on $P$. These coframe 1-forms read:
\begin{eqnarray*}
&&\theta^1=-\frac{af_{32}}{f_{22}}(\der a_0+y\der
  a_2)+\frac{f_{11}f_{22}-xaf_{32}}{f_{22}}(\der a_1+y\der a_3),\\
&&\theta^2=-\frac{af_{31}}{f_{11}}(\der a_0+x\der
  a_1)+\frac{f_{11}f_{22}-yaf_{31}}{f_{11}}(\der a_2+x\der a_3),\\
&&\theta^3=-\frac{af_{31}f_{32}}{f_{11}f_{22}}\der
a_0+\frac{f_{31}(f_{11}f_{22}-xaf_{32})}{f_{11}f_{22}}\der
a_1+\frac{f_{32}(f_{11}f_{22}-yaf_{31})}{f_{11}f_{22}}\der
a_2-\\&&\quad\quad\quad\frac{(f_{11}f_{22}-xaf_{32})(f_{11}f_{22}-yaf_{31})}{af_{11}f_{22}}\der a_3,\\
&&\theta^4=a(\der a_0+x\der a_1+y\der a_2+xy\der a_3),\quad\quad\Om_2=\frac{a}{f_{11}}\der x,\quad\quad\Om_3=\frac{a}{f_{22}}\der y,
\end{eqnarray*}
\begin{eqnarray*}
&&\Om_1=\tfrac12\der\log(\frac{f_{11}}{f_{22}})+\frac{a}{f_{11}f_{22}}(f_{31}\der y-f_{32}\der
  x),\\
&&\Om_4=\frac{f_{31}}{f_{11}}\der\log(\frac{af_{31}}{f_{11}f_{22}})+\frac{af_{31}^2}{f_{11}^2f_{22}}\der
  y,\\
&&\Om_5=\frac{f_{32}}{f_{22}}\der\log(\frac{af_{32}}{f_{11}f_{22}})+\frac{af_{32}^2}{f_{11}f_{22}^2}\der
  x,\\
&&\Om_6=\tfrac12\der\log(\frac{f_{11}f_{22}}{a^2})-\frac{a}{f_{11}f_{22}}(f_{31}\der y+f_{32}\der
  x),\\
&&A=-\der\log(f_{11}f_{22}).
\end{eqnarray*}
It can be checked by a direct calculation that these forms satsify 
(\ref{sy1})-(\ref{sy4}).  
\end{proof}

\subsection{Newman's construction}\label{fm2}
After E. Ted Newman \cite{et,nemo} we recall that  
the system (\ref{et}) has the interesting property that its solution space
$\bbR^4$ is naturally equipped with a conformal metric of split
signature. This is defined as follows.  

Consider two neighboring solutions of (\ref{et}) corresponding to two
points ${\bf a}$ and ${\bf a}+\der {\bf a}$ in $\bbR^4$. These two
solutions can be considered as two surfaces, the graphs of two functions, 
\begin{eqnarray*}  
z(x,y)&=&a_0+a_1x+a_2y+a_3 xy\quad\quad\&\\
(z+\der z)(x,y)&=&(a_0+\der a_0)+(a_1+\der a_1)x+(a_2+\der a_2)y+(a_3+\der
  a_3) xy,
\end{eqnarray*}
in $\bbR^3$ with coordinates $(x,y,z)$. One can ask what conditions 
the two points ${\bf a}$ and ${\bf a}+\der{\bf a}$ in the solution
space $\bbR^4$ must
satisfy for these two surfaces to be tangent at some point $(x,y,z)$ in 
$\bbR^3$. An elementary argument shows that the point $(x,y,z)$ at which
the two surfaces are tangent satisfies the following equations:
\begin{eqnarray}
&&\der z=\der a_0+\der a_1x+\der a_2y+\der a_3 xy=0,\nonumber\\
&&(\der z)_x=\der a_1+\der a_3 y=0\quad\&\quad(\der z)_y=\der a_2+\der
  a_3 x=0.\nonumber
\end{eqnarray}
The first of the above equations says that the two surfaces intersect
at a point $(x,y, z(x,y))$, and the second two equations say that
they are tangent at the same point $(x,y,z(x,y))$. These three equations for the two
unknowns $(x,y)$ have a solution if and only if $\der {\bf a}$
satisfies a compatibility condition, which is obtained by 
eliminating $x$ and $y$
from the two second equations, and by inserting the so determined $x$ and
$y$ in the first equation. This compatibility condition is:
$$\der a_0\der a_3-\der a_1\der a_2=0.$$
Thus: two neighboring solutions ${\bf a}$ and ${\bf a}+\der {\bf a}$
of (\ref{et}) are tangent to each other at some point
$(x,y,z)$ in $\bbR^3$ if and only if they are \emph{null separated} in
the flat split signature metric 
\be g= 2(\der a_0\der a_3-\der a_1\der a_2)\label{wnu}\ee
in $\bbR^4$. This shows that the solution space of (\ref{et}) is
naturally equipped with a conformal structure. This gives a
correspondence between the incidence relations between two solutions
of (\ref{et}) treated as surfaces in $\bbR^3$ and conformal
properties of points in the solution space $\bbR^4$. This description
is very similar to the well known correspondences in the \emph{Lie sphere
geometry}, or more generally, in \emph{Penrose's twistor theory}. 

A new view of Newman's construction, stressing the
\emph{Weyl geometric} aspect of it, follows from our Theorem
\ref{pfa}, and is included in the following theorem.

\begin{theorem}\label{pfa1}
Every para-CR structure of type $(1,2,3)$ which is para-CR equivalent
to the para-CR structure (\ref{pf1})-(\ref{pf2}) uniquely defines an
11-dimensional principal fiber bundle $\cog(2,2)\to P\to{\mathcal S}$, 
with the 7-dimensional homothetic structure group $\cog(2,2)$, over a 4-dimensional
manifold $\mathcal S$, which can be identified with the solution space
of a pair of PDEs on the plane: $z_{xx}=0=z_{yy}$. It also defines a flat Weyl
geometry $[g,A]$ on $\mathcal S$, in which the $(2,2)$-signature metric
$g$ and the 1-form $A$ change conformally, $g\to {\rm e}^{2\phi}g$, $A\to
A-2\der \phi$, when the system $z_{xx}=0=z_{yy}$ undergoes a point
transformation of the variables $(x,y,z)$. 
\end{theorem}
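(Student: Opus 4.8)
The plan is to extract everything from Theorem~\ref{pfa}, which already furnishes the canonical $11$-dimensional manifold $P$ together with the coframe $(\theta^i,\Om_\mu,A)$ obeying the structure equations \nn{sy1}--\nn{sy4}. The crucial observation is that \nn{sy1}--\nn{sy4} are exactly the Maurer--Cartan equations of the group $G=\cog(2,2)\ltimes\bbR^4$: the four forms $\theta^i$ are the translational (base) components, while the matrix $\Gamma^i_{~j}$ of \nn{sy2} takes values in $\coa(2,2)=\soa(2,2)\oplus\bbR$, its antisymmetric part $\Gamma_{[ij]}$ of \nn{sy4} carrying the $\soa(2,2)$ piece and its trace part $\tfrac12 A\,\delta^i_{~j}$ carrying the dilation. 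Thus $(\theta^i,\Gamma^i_{~j})$ is a \emph{flat} Cartan connection of type $(G,\cog(2,2))$, i.e.\ a flat Weyl geometry in disguise.

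First I would produce the fibration. Since \nn{sy1} gives $\der\theta^i\equiv 0 \pmod{\theta^1,\dots,\theta^4}$, the rank-$7$ distribution $\mathcal V=\ker\theta^1\cap\dots\cap\ker\theta^4$ is Frobenius-integrable; its $4$-dimensional local leaf space is $\mathcal S$, and $\pi:P\to\mathcal S$ is a submersion with $7$-dimensional fibres. Reading the explicit coframe of Theorem~\ref{pfa} one checks that $\theta^1=\dots=\theta^4=0$ is equivalent to $\der a_0=\dots=\der a_3=0$, so the leaves are precisely the level sets of $(a_0,a_1,a_2,a_3)$ and $\mathcal S$ is canonically the $4$-dimensional solution space \nn{ets} of \nn{et}.

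Next I would install the group and descend the geometry. Restricted to a fibre every $\theta^i$ vanishes, so \nn{sy11} becomes the Maurer--Cartan equation $\der\Gamma^i_{~j}+\Gamma^i_{~k}\dz\Gamma^k_{~j}=0$ for the $\coa(2,2)$-valued form $\Gamma^i_{~j}$; by the fundamental (Lie--Cartan) theorem on maps into Lie groups each fibre is locally identified with $\cog(2,2)$, with $\Gamma^i_{~j}$ its Maurer--Cartan form, turning $\pi$ into a principal $\cog(2,2)$-bundle. I then form the degenerate symmetric tensor $\tilde g=g_{ij}\theta^i\theta^j=2(\theta^1\theta^2+\theta^3\theta^4)$ on $P$, whose radical is exactly $\mathcal V$; because the symmetric part of $\Gamma$ is pure trace, $\Gamma_{(ij)}=\tfrac12 A g_{ij}$, the Lie derivative of $\tilde g$ along any vertical field is a multiple of $\tilde g$, so $\tilde g$ descends to a well-defined \emph{conformal} class $[g]$ on $\mathcal S$, while a choice of section pulls $A$ back to a $1$-form on $\mathcal S$ that shifts by $A\mapsto A-2\der\phi$ under a rescaling $g\mapsto \mathrm e^{2\phi}g$ of the representative. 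Fixing the section $x=y=0$, $a=f_{11}=f_{22}=1$, $f_{31}=f_{32}=0$ returns $\tilde g$ (up to sign) to Newman's metric \nn{wnu} and $A$ to $0$, confirming the identification.

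Flatness is then immediate from \nn{sy11}: the full Cartan curvature vanishes, its $\soa(2,2)$-part being the conformal Weyl curvature and its trace giving $\der A=0$ (since $\mathrm{tr}(\Gamma\dz\Gamma)=0$ forces $R^i_{~i}=2\der A=0$), so $[g,A]$ is a flat Weyl geometry. Finally, a point transformation of $(x,y,z)$ is one of the para-CR diffeomorphisms of $\Sigma$ recorded before \nn{pf1}; by the uniqueness clause of Theorem~\ref{pfa} it lifts to a bundle automorphism preserving \nn{sy1}--\nn{sy4}, hence acts on $\mathcal S$ fixing $[g]$ while rescaling the chosen representative, i.e.\ by $g\to\mathrm e^{2\phi}g$, $A\to A-2\der\phi$. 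I expect the main obstacle to be the descent step: verifying rigorously that the fibres are genuine $\cog(2,2)$-orbits (the action free and proper, so $\mathcal S$ is a manifold and $\pi$ is principal) and that $\tilde g$ together with $A$ yields a \emph{bona fide} Weyl structure, since this is where the abstract Cartan-geometric packaging must be reconciled with the concrete coframe formulae of Theorem~\ref{pfa}.
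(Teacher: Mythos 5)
Your proposal is correct and takes essentially the same route as the paper's own proof: both arguments obtain $\mathcal S$ as the leaf space of the integrable annihilator of $(\theta^1,\theta^2,\theta^3,\theta^4)$, identify the fibres with $\cog(2,2)$ from the constant-coefficient (Maurer--Cartan) form of \nn{sy11p}, descend $G=2(\theta^1\theta^2+\theta^3\theta^4)$ and $A$ via the vertical Lie-derivative computation ${\mathcal L}_ZG=-(Z\hook A)G$, ${\mathcal L}_ZA=\der(Z\hook A)$, and use the explicit coframe of Theorem \ref{pfa} to identify $\mathcal S$ with the solution space parametrized by $(a_0,a_1,a_2,a_3)$. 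Your Cartan-connection packaging and your use of an explicit section (rather than the paper's direct coordinate evaluation of $G$ and $A$) are only cosmetic differences.
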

\begin{proof}
Given a para-CR manifold locally equivalent 
to (\ref{pf1})-(\ref{pf2}) we use the previous theorem and construct
an 11-dimensional manifold $P$ with the coframe
$(\theta^i,\Omega_\mu,A)$ satisfying
(\ref{sy1})-(\ref{sy3}). It is convenient to write down these
equations explicitly. Equations (\ref{sy1}), when written in the
coframe $(\theta^i,\Omega_\mu,A)$ read:
\begin{eqnarray}
&&\der\theta^1=(\Omega_1-\tfrac12
A)\dz\theta^1-\Omega_3\dz\theta^3-\Omega_5\dz\theta^4\nonumber\\
&&\der\theta^2=(-\Omega_1-\tfrac12
A)\dz\theta^2-\Omega_2\dz\theta^3-\Omega_4\dz\theta^4\label{sy1p}\\
&&\der\theta^3=\Omega_4\dz\theta^1+\Omega_5\dz\theta^2+(\Omega_6-\tfrac12A)\dz\theta^3\nonumber\\
&&\der\theta^4=\Omega_2\dz\theta^1+\Omega_3\dz\theta^2+(-\Omega_6-\tfrac12A)\dz\theta^4,\nonumber
\end{eqnarray}
whereas equations (\ref{sy11}) read:
\begin{eqnarray}
&&\der\Omega_1=\Omega_2\dz\Omega_5-\Omega_3\dz\Omega_4\nonumber\\
&&\der\Omega_2=\Omega_2\dz(\Omega_1+\Omega_6)\nonumber\\
&&\der\Omega_3=(\Omega_1-\Omega_6)\dz\Omega_3\nonumber\\
&&\der\Omega_4=\Omega_4\dz(\Omega_1-\Omega_6)\label{sy11p}\\
&&\der\Omega_5=(\Omega_1+\Omega_6)\dz\Omega_5\nonumber\\
&&\der\Omega_6=\Omega_2\dz\Omega_5+\Omega_3\dz\Omega_4\nonumber\\
&&\der A=0.\nonumber
\end{eqnarray}
The appearance of only constant coefficients in front of the 2-forms on the right hand sides
of equations (\ref{sy1p})-(\ref{sy11p}) enables us to identify the
coframe forms $(\theta^i,\Omega_\mu,A)$ with the Maurer-Cartan forms on an
11-dimensional Lie group with a Lie algebra having structure constants
equal to these coeficients. This shows that $P$ is a Lie group. A 
look at the structure constants of the corresponding Lie algebra
given by
(\ref{sy1p})-(\ref{sy11p}), shows that this Lie group is $P=\bbR^4 \rtimes\cog(2,2)$.   
The $\cog(2,2)$ principal fibre bundle 
structure on $P=\bbR^4 \rtimes\cog(2,2)$ corresponds to the fibration $\cog(2,2)\to\bbR^4 \rtimes\cog(2,2)\to
\bbR^4$, i.e. to the natural principal $\cog(2,2)$ fibration over the
homogeneous space $\bbR^4\simeq
(\bbR^4\rtimes\cog(2,2))/\cog(2,2)$. Existence of this fibration on
$P$ is guaranteed by the equations (\ref{sy1}) (or what is the same
(\ref{sy1p})). They say that the 1-forms $(\theta^1,\theta^2,\theta^3,\theta^4)$ form a
closed differential ideal, so that their annihilator defines a foliation
of $P$ by 7-dimensional manifolds. On each of these 7-dimensional
manifolds the forms $(\theta^1,\theta^2,\theta^3,\theta^4)$ vanish identically, and
the additional seven 1-forms
$(\Omega_1,\Omega_2,\Omega_3,\Omega_4,\Omega_5,\Omega_6,A)$ form a
coframe. The differentials of this coframe, on each leaf of the
foliation, satisfies a closed exterior differential system with constant
coefficients (\ref{sy11p}). Thus each leaf can be identified with the
same Lie group, whose Lie algebra has structure constants determined
by (\ref{sy11p}). It is easy to see that this 7-dimensional Lie
algebra 
is the homothetic Lie algebra $\coa(2,2)$ of homothetic
motions in 4-dimensions associated with a metric of signature
$(+,+,-,-)$.

The appearance of the Lie group $\cog(2,2)$ as a subgroup of $P$
suggests that ${\mathcal S}=\bbR^4\simeq
(\bbR^4\rtimes\cog(2,2))/\cog(2,2)$ is naturally equipped with a
conformal metric of signature $(+,+,-,-)$. This is indeed the case. 
The metric
is obtained as follows: consider the bilinear form $G$ on
$P$ defined by:
$$G=2(\theta^1\theta^2+\theta^3\theta^4).$$
This form is highly degenerate on $P$, but its degenerate directions
are precisely along the fiber directions of the foliation
$\cog(2,2)\to P\to M$; actually $G$ has signature
$(+,+,-,-,0,0,0,0,0,0,0)$. 
Morever, using the sytem (\ref{sy1}) it can be
easily checked that the Lie derivatives of $G$ along all the
directions tangent to the fibres are just multiples of $G$. In
particular, if $Z$ is any vector field on $P$ tangent to the fibres,
we have $Z\hook\theta^i\equiv 0$, and as a consequence of (\ref{sy1p})
we get $${\mathcal L}_ZG=-(Z\hook A) G.$$ Thus $G$
descends to a conformal metric $[g]$ of signature $(+,+,-,-)$ on the
quotient space ${\mathcal S}=P/\cog(2,2)$.

Using the last equation (\ref{sy11p}) we also get 
$${\mathcal L}_ZA=\der (Z\hook A),$$
so we see that the pair $(G,A)$, changes as $(G,A)\to(G',A')=({\rm
  e}^{2\phi}G,A-2\der\phi)$ when it is Lie dragged along the fibres of
$\cog(2,2)\to P\to{\mathcal S}$. Thus it descends to a split signature
Weyl geometry $[g,A]$ on $\mathcal S$. The equations (\ref{sy11p}),
when pulled back to $\mathcal S$, show that this Weyl geometry is flat.    

To interpret the quotient ${\mathcal S}=P/\cog(2,2)$ as the solution
space of the pair of equations $z_{xx}=0=z_{yy}$ we use the
corresponding para-CR forms (\ref{pf1}), together with the 
explicit expressions for the invariant forms
$(\theta^1,\theta^2,\theta^3,\theta^4)$ and $A$ in coordinates
$(x,y,a_0,a_1,a_2,a_3,a,f_{11},f_{22},f_{31},f_{32})$ on $P$, as in the proof of
Theorem \ref{pfa}. A short calculation shows that 
$$G=2(\theta^1\theta^2+\theta^3\theta^4)=-2f_{11}f_{22}(\der a_0\der
a_3-\der a_1\der a_2).$$
This, together with $A=-\der\log(f_{11}f_{22})$, shows that the representative $(g,A)\in[g,A]$ can be taken as 
$$g=2(\der a_0\der a_3-\der a_1\der a_2),\quad\quad A=0,$$ and that $\mathcal S$ is
parametrized by $(a_0,a_1,a_2,a_3)$. Since these parameters constitute all the integration constants
of the equations $z_{xx}=0=z_{yy}$, the quotient $\mathcal S$ can be naturally
identified with the solution space of these equations.
\end{proof}

\subsection{The principal bundle
  point of view and Weyl geometry}\label{fm3}
In the previous section we have shown how to associate an
11-dimensional principal fiber bundle $\cog(2,2)\to P\to\mathcal S$ to
any flat para-CR structure of type $(1,2,3)$. Here we reverse this
construction.  
\begin{proposition}\label{fiu}
Every 11-dimensional manifold $P$ with a coframe $(\theta^i,\Omega_\mu,A)$, $i=1,2,3,4$, $\mu=1,2,3,4,5,6$, satisfying
the differential system
\begin{eqnarray}
&&\der\theta^1=(\Omega_1-\tfrac12
A)\dz\theta^1-\Omega_3\dz\theta^3-\Omega_5\dz\theta^4\nonumber\\
&&\der\theta^2=(-\Omega_1-\tfrac12
A)\dz\theta^2-\Omega_2\dz\theta^3-\Omega_4\dz\theta^4\nonumber\\
&&\der\theta^3=\Omega_4\dz\theta^1+\Omega_5\dz\theta^2+(\Omega_6-\tfrac12A)\dz\theta^3\nonumber\\
&&\der\theta^4=\Omega_2\dz\theta^1+\Omega_3\dz\theta^2+(-\Omega_6-\tfrac12A)\dz\theta^4,\nonumber\\
&&\der\Omega_1=\Omega_2\dz\Omega_5-\Omega_3\dz\Omega_4+\tfrac12\kappa_{1ij}\theta^i\dz\theta^j\label{fyu}\\
&&\der\Omega_2=\Omega_2\dz(\Omega_1+\Omega_6)+\tfrac12\kappa_{2ij}\theta^i\dz\theta^j\nonumber\\
&&\der\Omega_3=(\Omega_1-\Omega_6)\dz\Omega_3+\tfrac12\kappa_{3ij}\theta^i\dz\theta^j\nonumber\\
&&\der\Omega_4=\Omega_4\dz(\Omega_1-\Omega_6)+\tfrac12\kappa_{4ij}\theta^i\dz\theta^j\nonumber\\
&&\der\Omega_5=(\Omega_1+\Omega_6)\dz\Omega_5+\tfrac12\kappa_{5ij}\theta^i\dz\theta^j\nonumber\\
&&\der\Omega_6=\Omega_2\dz\Omega_5+\Omega_3\dz\Omega_4+\tfrac12\kappa_{6ij}\theta^i\dz\theta^j\nonumber\\
&&\der A=\tfrac12F_{ij}\theta^i\dz\theta^j,\nonumber
\end{eqnarray}
with $\kappa_{aij}$, $F_{ij}$ being functions on $P$, is locally a
principal fiber bundle $\cog(2,2)\to P\to {\mathcal S}$ over a
4-dimensional manifold $\mathcal S$ naturally equipped with a Weyl
geometry $[g,A]$, in which the split signature conformal metric $g$ is
determined by a bilinear form $G=2(\theta^1\theta^2+\theta^3\theta^4)$
on $P$, and the Weyl potential 1-form is determined by the 1-form $A$ on
$P$. The curvature of this Weyl geometry is given by 
$${\mathcal R}=\bma
0&\kappa_1+\tfrac12{\mathcal F}&\kappa_2&\kappa_4\\-\kappa_1+\tfrac12{\mathcal F}&0&\kappa_3&\kappa_5\\-\kappa_2&-\kappa_3&0&\kappa_6+\tfrac12{\mathcal F}\\-\kappa_4&-\kappa_5&-\kappa_6+\tfrac12{\mathcal F}&0\ema,$$
where $\kappa_a=\tfrac12 \kappa_{aij}\theta^i\dz\theta^j$ and ${\mathcal
  F}=\tfrac12 F_{ij}\theta^i\dz\theta^j$.  
\end{proposition}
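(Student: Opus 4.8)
The plan is to run the construction of Theorem~\ref{pfa1} in reverse, regarding the functions $\kappa_{aij}$ and $F_{ij}$ as the only new data, namely the curvature. First I would note that the four equations for $\der\theta^i$ in (\ref{fyu}) repackage, exactly as in (\ref{sy2})--(\ref{sy4}), into $\der\theta^i+\Gamma^i_{~j}\dz\theta^j=0$ with $\Gamma^i_{~j}=g^{ik}(\Gamma_{[kj]}+\tfrac12 A\,g_{kj})$. In particular the ideal generated by $(\theta^1,\theta^2,\theta^3,\theta^4)$ is closed under $\der$, so by the Frobenius theorem its annihilator is an integrable rank-$7$ distribution. This foliates $P$ by $7$-dimensional leaves and yields, locally, a submersion $\pi\colon P\to\mathcal{S}$ onto a $4$-dimensional quotient whose fibres are the leaves.

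Next I would identify the fibres with $\cog(2,2)$-orbits. Let $Z$ be any vertical field, i.e.\ one annihilated by all the $\theta^i$; such fields are spanned by the duals of $(\Omega_1,\dots,\Omega_6,A)$. On each leaf the $\theta^i$ vanish, so the $\kappa$- and $F$-terms in (\ref{fyu}) drop out and the remaining equations reduce precisely to the Maurer--Cartan system (\ref{sy11p}). By the identification already established in the proof of Theorem~\ref{pfa1}, these are the structure equations of the homothety algebra $\coa(2,2)$. Hence the vertical fields close into $\coa(2,2)$ and integrate to a (local) free $\cog(2,2)$-action whose orbits are the leaves, making $\pi\colon P\to\mathcal{S}$ a local principal $\cog(2,2)$-bundle.

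To obtain the Weyl structure I would compute Lie derivatives along a vertical $Z$. Since $Z\hook\theta^i=0$, the structure equations give $\mathcal{L}_Z\theta^i=Z\hook\der\theta^i=-\Gamma^i_{~j}(Z)\,\theta^j$, whence for $G=g_{ij}\theta^i\theta^j=2(\theta^1\theta^2+\theta^3\theta^4)$ one finds $\mathcal{L}_Z G=-(\Gamma_{ij}(Z)+\Gamma_{ji}(Z))\,\theta^i\theta^j$ with $\Gamma_{ij}=g_{ik}\Gamma^k_{~j}$. The antisymmetric part drops out of the symmetric combination $\Gamma_{ij}+\Gamma_{ji}=A\,g_{ij}$, so $\mathcal{L}_Z G=-(Z\hook A)\,G$ and $G$ descends to a conformal class $[g]$ of signature $(+,+,-,-)$ on $\mathcal{S}$. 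Likewise, since $\der A=\tfrac12 F_{ij}\theta^i\dz\theta^j$ annihilates $Z$, $\mathcal{L}_Z A=\der(Z\hook A)$, which is the transformation law of a Weyl potential; together these make $(G,A)$ descend to a Weyl geometry $[g,A]$ with $(g,A)\mapsto(\mathrm{e}^{2\phi}g,\,A-2\der\phi)$.

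The final and most delicate step is the curvature identity. I would compute $\mathcal{R}_{ij}=\der\Gamma_{ij}+g^{kl}\Gamma_{ik}\dz\Gamma_{lj}$ and substitute $\Gamma_{ij}=\Gamma_{[ij]}+\tfrac12 A\,g_{ij}$. The two cross terms linear in $A$ combine to $\tfrac12(\Gamma_{[ij]}\dz A+A\dz\Gamma_{[ij]})=0$, and the $A\dz A$ term vanishes, so no trace--antisymmetric mixing survives. The purely antisymmetric remainder $\der\Gamma_{[ij]}+g^{kl}\Gamma_{[ik]}\dz\Gamma_{[lj]}$ is exactly the quantity that vanishes in the flat model (\ref{sy11}); the only change here is the extra summand $\tfrac12\kappa_{aij}\theta^i\dz\theta^j$ inside each $\der\Omega_a$, so this remainder collapses to the antisymmetric matrix whose entries are the $\kappa_a=\tfrac12\kappa_{aij}\theta^i\dz\theta^j$, arranged as the $\Omega_a$ in (\ref{sy4}). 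Adding the trace contribution $\tfrac12(\der A)\,g_{ij}=\tfrac12\mathcal{F}\,g_{ij}$ and reading off $g_{ij}$ from (\ref{sy3}) reproduces the stated matrix $\mathcal{R}$. The main obstacle is the bookkeeping of this last step: keeping the homogeneous Maurer--Cartan contributions organised so that their wholesale cancellation against (\ref{sy11}) is manifest, rather than grinding through all sixteen entries by hand---which I would secure by quoting the flat identity of Theorem~\ref{pfa1} directly.
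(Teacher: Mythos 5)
Your proposal is correct and follows essentially the same route as the paper: Frobenius applied to the closed ideal $(\theta^1,\theta^2,\theta^3,\theta^4)$ gives the local fibration over a 4-manifold, restriction to the leaves (where the $\kappa_a$ and ${\mathcal F}$ terms drop out) identifies the fibres with $\cog(2,2)$ via the Maurer--Cartan system (\ref{sy11p}), the Lie-derivative computations ${\mathcal L}_Z G=-(Z\hook A)\,G$ and ${\mathcal L}_Z A=\der(Z\hook A)$ descend $(G,A)$ to a Weyl structure, and the curvature is obtained by evaluating $\der\Gamma^i_{~j}+\Gamma^i_{~k}\dz\Gamma^k_{~j}$ from (\ref{fyu}) and lowering an index. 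The paper phrases this last step as a computation pulled back along a local section $\sigma({\mathcal S})$, while you organize it through the cancellation of the $A$-cross terms against the flat identity, but it is the same calculation.
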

\begin{proof}
As in the proof of Theorem \ref{pfa1} we easily see that the
7-dimensional distribution annihilating
$(\theta^1,\theta^2,\theta^3,\theta^4)$ is integrable, and hence we
have a local projection $\pi: P\to {\mathcal S}$, identifying points
along the same leaves of the corresponding foliation. Since on the
leaves the forms $\theta^i$ vanish, and since the differentials 
$\der\Omega_\mu$s differ from those in (\ref{sy11p}) by terms that
vanish on the leaves, every leave is a local Lie group isomorphic to
$\cog(2,2)$. This proves that the manifold $P$ is locally a principal fiber
bundle $\cog(2,2)$ $\to P\to\mathcal S$. 

To prove that $\mathcal S$ has a natural Weyl structure $[g,A]$, one
repeats the argument from the previous proof. Although in (\ref{fyu}),
when compared to (\ref{sy1p})-(\ref{sy11p}), the new terms 
$\kappa_a$ and $\mathcal F$ appear, 
the argument from the previous proof is not altered. This
is because (1) the new terms do not appear in the `conformal
metricity/torsion' part of the equations (i.e. $\der\theta^i$
equations) and (2) they appear in $\der A$ only in harmless terms
which are annihilated by any vertical direction. 

The curvature of this Weyl structure can be calculated, by observing
that on any section $\sigma({\mathcal S})$ of $P$ the Weyl
connection is given by
$\Gamma^i_{~j}=g^{ik}\sigma^*(\Gamma_{[kj]}+\tfrac12 Ag_{kj})$, where
$\Gamma_{ij}$ is expressed in terms of the forms $\Omega_\mu$
appearing in (\ref{fyu}) via formula (\ref{sy4}), and $g_{ij}$,
$g^{ji}$ are as in (\ref{sy3}). The rest of the proof consists in
calculating ${\mathcal
  R}^i_{~j}=\der\Gamma^i_{~j}+\Gamma^i_{~k}\dz\Gamma^k_{~j}$ using
(\ref{fyu}) and lowering one index.   
\end{proof}
This proposition is crucial for the remaining sections. In particular
it can be used to prove the theorem, which gives the converse of
Newman's construction:
\begin{theorem}\label{ffm}
Every 11-dimensional manifold $P$ which is equipped with a coframe
$(\theta^i,\Omega_\mu,A)$, $i=1,2,3,4$, $\mu=1,2,3,4,5,6$, satisfying
the differential system (\ref{sy1p})-(\ref{sy11p}), is locally a principal
fiber bundle $\cog(2,2)\to P\to\mathcal S$, originating from a flat
para-CR manifold $(\Sigma,[\lambda,\mu_1,\mu_2,\nu_1,\nu_2,\nu_3])$ of type
$(1,2,3)$, via the procedure described by Theorem \ref{pfa}.
\end{theorem}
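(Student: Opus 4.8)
The plan is to reduce Theorem \ref{ffm} to the uniqueness statement contained in Theorem \ref{pfa}, combined with the Cartan--Lie theorem on coframes satisfying structure equations with constant coefficients. The crucial preliminary observation is that the system (\ref{sy1p})--(\ref{sy11p}) is exactly the special case $\kappa_{aij}=0$, $F_{ij}=0$ of the system (\ref{fyu}) treated in Proposition \ref{fiu}, so every conclusion of that proposition is available at no extra cost.

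First I would invoke Proposition \ref{fiu} to conclude immediately that $P$ is locally a principal fibre bundle $\cog(2,2)\to P\to\mathcal S$ over a $4$-dimensional base $\mathcal S$, carrying a (now \emph{flat}) Weyl geometry $[g,A]$ whose split-signature conformal metric is induced by the bilinear form $G=2(\theta^1\theta^2+\theta^3\theta^4)$ on $P$. The integrability of the rank-$7$ distribution annihilating $(\theta^1,\theta^2,\theta^3,\theta^4)$, which produces this fibration, follows from the first four equations of (\ref{sy1p}) exactly as in the proof of Theorem \ref{pfa1}: those four equations show that the ideal generated by the $\theta^i$ is closed.

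Next I would exploit the fact that all coefficients in (\ref{sy1p})--(\ref{sy11p}) are constant. As already established in the proof of Theorem \ref{pfa1}, these constants are precisely the structure constants of the Lie algebra of $G=\bbR^4\rtimes\cog(2,2)$, so the coframe $(\theta^i,\Omega_\mu,A)$ satisfies the Maurer--Cartan equations of $G$. By the Cartan--Lie theorem (the converse to the first fundamental theorem of Lie, see \cite{olver}), any two manifolds bearing coframes that satisfy the same constant structure equations are related by a local diffeomorphism matching the coframes. Applying this, $P$ is locally diffeomorphic, by a coframe-preserving map $\Phi$, to the model manifold $P_0$ that Theorem \ref{pfa} associates to the flat para-CR structure (\ref{pf1})--(\ref{pf2}); the proof of Theorem \ref{pfa1} identifies this $P_0$ with $G$ equipped with its Maurer--Cartan coframe. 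Hence $P$ and $P_0$ are locally the same object.

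Finally I would verify that $\Phi$ is compatible with the two bundle projections. Since $\Phi^*$ sends each $\theta^i$ to the corresponding $\theta^i$ on $P_0$, it carries the foliation annihilating $(\theta^1,\theta^2,\theta^3,\theta^4)$ to its counterpart, and therefore descends to a local diffeomorphism $\mathcal S\to\mathcal S_0$ intertwining the two flat Weyl structures. As $\mathcal S_0$ is, by Theorem \ref{pfa1}, the solution space of $z_{xx}=0=z_{yy}$, this exhibits $P$ as arising from the flat $(1,2,3)$ para-CR manifold $\Sigma$ of (\ref{pf1})--(\ref{pf2}) through the procedure of Theorem \ref{pfa}. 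I expect the main obstacle to be bookkeeping rather than conceptual: checking that the coframe-matching $\Phi$ genuinely respects the fibre-bundle projections and the $\cog(2,2)$-action along the fibres, rather than merely matching forms pointwise. This, however, is forced by the block structure of (\ref{sy1p}), which cleanly segregates the horizontal forms $\theta^i$ from the vertical forms $\Omega_\mu, A$, so that $\Phi$ automatically preserves both the horizontal ideal and the group action it generates.
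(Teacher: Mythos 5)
Your proposal is correct, but it proves the theorem by a genuinely different mechanism than the paper. Your first step coincides with the paper's: both apply Proposition \ref{fiu} with $\kappa_{aij}\equiv 0$, $F_{ij}\equiv 0$ to obtain the fibration $\cog(2,2)\to P\to\mathcal S$. The divergence is in how the flat para-CR origin of $P$ is established. You appeal to the rigidity of coframes with \emph{constant} structure functions (the Cartan--Lie converse theorem): since the model coframe built in the proof of Theorem \ref{pfa} satisfies the same Maurer--Cartan system of $\bbR^4\rtimes\cog(2,2)$, there is a local coframe-preserving diffeomorphism $P\to P_0$, and the conclusion is transported back through it; the bookkeeping in your last paragraph (matching of the horizontal ideal, hence of projections and fibre group structure) is indeed forced, as you say. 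The paper instead \emph{reconstructs} the para-CR manifold intrinsically from $P$: it observes that $(\theta^1,\theta^2,\theta^3,\theta^4,\Omega_2,\Omega_3)$ generate a second closed differential ideal, whose $5$-dimensional integrable annihilator foliates $P$ by leaves isomorphic to $K={\bf Aff}(1)\times{\bf Aff}(1)\times\bbR^*$, exhibits $P$ as a second bundle $K\to P\to\Sigma$, and shows that on transversals the quartet $(\bar\theta^i)$ and the triplet $(\bar\theta^4,\bar\Omega_2,\bar\Omega_3)$ cut out integrable distributions $\bar H^+$, $\bar H^-$ defining the flat $(1,2,3)$ para-CR structure on the $6$-dimensional leaf space $\Sigma=P/K$. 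Your route is shorter and cleaner, but it is available \emph{only} in the flat case, where rigidity of constant-coefficient coframes applies; the paper's route buys an explicit inverse construction (the double fibration $\mathcal S\leftarrow P\rightarrow\Sigma$ and the identification of $K$) which is exactly the machinery one would have to adapt in any curved generalization, where no model to compare against exists.
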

\begin{proof}
That $P$ with a system (\ref{sy1p})-(\ref{sy11p}) is locally a principal fiber
bundle $\cog(2,2)$ $\to P\to\mathcal S$ is an immediate consequence of
Proposition \ref{fiu} with $\kappa_a\equiv 0$ and ${\mathcal F}\equiv
  0$. Here we show that apart from the foliation
$\cog(2,2)\to P$, the system (\ref{sy1p})-(\ref{sy11p}) defines another
foliation of the manifold $P$, whose leaf space can be identified with
a 6-dimensional flat para-CR structure $\Sigma$. 

To see this consider the forms
$(\theta^1,\theta^2,\theta^3,\theta^4,\Omega_2,\Omega_3)$, and observe
that the system (\ref{sy1p}) and the second and the third equations
from system (\ref{sy11p}) guarantee that these six forms constitute a
closed differential ideal. Therefore their annihilator is a
5-dimensional integrable distribution on $P$, whose integral manifolds
define a 6-parameter foliation of $P$. Putting $\Omega_2\equiv 0\equiv
\Omega_3$ in equations (\ref{sy11p}) we see that the coframe
$(\Omega_1,\Omega_4,\Omega_5,\Omega_6,A)$ on these integral manifolds
satsifies a closed differential system with all the coefficients being
constants. Thus all these integral manifolds can be identified with a
unique Lie group $K$, which turns out to be a direct product $K={\bf Aff}(1)\times{\bf
  Aff}(1)\times \bbR^*$ of two independent groups of affine
transformations of the real line, ${\bf Aff}(1)$, and the
multiplicative group of the real numbers $\bbR^*$. This shows that the
manifold $P$, with the system of 1-forms (\ref{sy1p})-(\ref{sy11p}), can be
also locally viewed as a principal fibre bundle $K\to P\to
\Sigma$. Here $\Sigma$ is the 6-dimensional leaf space of the foliation
whose leaves are identified with $K$. Any manifold $\bar{\Sigma}$
transversal to the fibres of these fibration is equipped with a
coframe
$(\bar{\theta}^i,\bar{\Omega}_2,\bar{\Omega}_3)=(\theta^i,\Omega_2,\Omega_3)_{|\bar{\Sigma}}$,
$i=1,2,3,4$, which satisfies the system 
\begin{eqnarray}
&&\der\bar{\theta}^1=(\bar{\Omega}_1-\tfrac12
\bar{A})\dz\bar{\theta}^1-\bar{\Omega}_3\dz\bar{\theta}^3-\bar{\Omega}_5\dz\bar{\theta}^4\nonumber\\
&&\der\bar{\theta}^2=(-\bar{\Omega}_1-\tfrac12
\bar{A})\dz\bar{\theta}^2-\bar{\Omega}_2\dz\bar{\theta}^3-\bar{\Omega}_4\dz\bar{\theta}^4\nonumber\\
&&\der\bar{\theta}^3=\bar{\Omega}_4\dz\bar{\theta}^1+\bar{\Omega}_5\dz\bar{\theta}^2+(\bar{\Omega}_6-\tfrac12\bar{A})\dz\bar{\theta}^3\label{syk}\\
&&\der\bar{\theta}^4=\bar{\Omega}_2\dz\bar{\theta}^1+\bar{\Omega}_3\dz\bar{\theta}^2+(-\bar{\Omega}_6-\tfrac12\bar{A})\dz\bar{\theta}^4,\nonumber\\
&&\der\bar{\Omega}_2=\bar{\Omega}_2\dz(\bar{\Omega}_1+\bar{\Omega}_6)\nonumber\\
&&\der\bar{\Omega}_3=(\bar{\Omega}_1-\bar{\Omega}_6)\dz\bar{\Omega}_3,\nonumber
\end{eqnarray}
with forms $\bar{\Omega}_1$, $\bar{\Omega}_4$, $\bar{\Omega}_5$,
$\bar{\Omega}_6$ and $\bar{A}$ on $\bar{\Sigma}$. That these forms are
the restrictions of $\Omega_1$, $\Omega_4$, $\Omega_5$, $\Omega_6$ and
$A$ to $\bar{\Sigma}$ is not important in the following. What is
important, is that the system (\ref{syk}) on $\bar{\Sigma}$ is
satisfied by a \emph{coframe}
$(\bar{\theta}^i,\bar{\Omega}_2,\bar{\Omega}_3)$, and that it implies
that the quartet of forms
$(\bar{\theta}^1,\bar{\theta}^2,\bar{\theta}^3,\bar{\theta}^4)$, as well as the triplet of forms
$(\bar{\theta}^4,\bar{\Omega}_2,\bar{\Omega}_3)$, \emph{both} form 
closed differential ideals of 1-forms on $\bar{\Sigma}$. Thus the 2-dimensional 
anihilator $\bar{H}^+$ of
$(\bar{\theta}^1,\bar{\theta}^2,\bar{\theta}^3,\bar{\theta}^4)$, as
well as the 3-dimensional anihilator $\bar{H}^-$ of
$(\bar{\theta}^4,\bar{\Omega}_2,\bar{\Omega}_3)$, define foliations of
$\bar{\Sigma}$ by, respectively, a 4-parameter family of 2-dimensional
leaves, and a 3-parameter family of 3-dimensional leaves. The integrable
distributions $\bar{H}^+$ and $\bar{H}^-$ obviously have
$\bar{H}^+\cap\bar{H}^-=\{0\}$, equipping each $\bar{\Sigma}$ with a
para-CR structure $(\bar{\Sigma},\bar{H}^+,\bar{H}^-)$. It is matter
of checking that the $(1,2,3)$-type para-CR structures on each
$\bar{\Sigma}$ are localy equivalent to each other, and that they
descend to the unique $(1,2,3)$-type para-CR structure
$(\Sigma,H^+,H^-)$ on the quotient $\Sigma=P/K$. Obviously this
para-CR structure is the flat one of Theorem \ref{pfa}.   

\end{proof}
\subsection{Non flat case}\label{fm4}
Now we generalize the flat example of Sections \ref{fm1}-\ref{fm3}  
to systems of PDEs on the plane of
the form 
\be 
z_{xx}=R(x,y,z,z_x,z_y,z_{xy})\quad\quad\&\quad\quad
z_{yy}=T(x,y,z,z_x,z_y,z_{xy}).\label{dhpndh} \ee      
We assume that they are finite type, or, what is the
same, we assume that their general solution can be written as 
$$z=\psi(x,y,a_0,a_1,a_2,a_3).$$
This is always the case \cite{nemo}, when the functions $R=R(x,y,z,p,q,s)$ and
$T=T(x,y,z,p,q,s)$ satisfy 
\be D^2_xT=D^2_yR,\label{pnd}\ee 
where the differential 
operators $D_x$ and $D_y$ are implicitly given by 
\be 
D_x=\partial_x+p\partial_z+R\partial_p+s\partial_q+D_yR\partial_s\quad\quad\&\quad\quad
D_y=\partial_y+q\partial_z+s\partial_p+T\partial_q+D_xT\partial_s.\label{pndha}
\ee
To make this implicit definition of $D_x$ and $D_y$ explicit 
one has to solve for $D_xT$ and $D_yR$ in
$D_xT=T_x+pT_z+RT_p+sT_q+(D_yR)T_s$ and
$D_yR=R_y+qR_z+sR_p+TR_q+(D_xT)R_s$. This is only possible if 
\be
T_sR_s\neq 1,\label{ndg}\ee which when assumed, defines $D_xT$ and $D_yR$
uniquely, and in turn after insertion in (\ref{pndha}), makes the
operators $D_x$ and $D_y$ explicit. Thus we assume (\ref{ndg}) from
now on.

To define a type $(1,2,3)$ para-CR structure associated with the 
system (\ref{dhpndh}), (\ref{pnd}), (\ref{ndg}) we do as follows. 
First, using the general solution $z=\psi(x,y,a_0,a_1,a_2,a_3)$, we
define the forms 
\begin{eqnarray}
&&\lambda=\psi_0\der a_0+\psi_1\der a_1+\psi_2\der a_2+\psi_3\der a_3\nonumber\\
&&\mu_1=\der x\nonumber\\
&&\mu_2=\der y\nonumber\\
&&\nu_1=\der a_1\label{134}\\
&&\nu_2=\der a_2\nonumber\\
&&\nu_3=\der a_3,\nonumber
\end{eqnarray}
Then we extend these forms to the class
$[\la,\mu_1,\mu_2,\nu_1,\nu_2,\nu_3]$ via (\ref{pf2}). This equips the
6-dimensional hypersurface
$$\Sigma=\{(x,y,z,a_0,a_1,a_2,a_3,a_4)\in\bbR^7~|~z=\psi(x,y,a_0,a_1,a_2,a_3)\}$$
in $\bbR^3\times\bbR^4$ with the $(1,2,3)$-para-CR structure $[\la,\mu_1,\mu_2,\nu_1,\nu_2,\nu_3]$. Alternatively, a \emph{para-CR equivalent} structure
may be defined on the second jets ${\mathcal J}^2$ of the system
(\ref{dhpndh})-(\ref{pnd}). Parametrizing this space by
$(x,y,z,p,q,s)$ we use the contact forms
\begin{eqnarray}
&&\lambda=\der z-p\der x-q\der y\nonumber\\
&&\mu_1=\der x\nonumber\\
&&\mu_2=\der y\nonumber\\
&&\nu_1=\der p-R\der x-s\der y\label{135}\\
&&\nu_2=\der q-s\der x-T\der y\nonumber\\
&&\nu_3=\der s-D_yR\der x-D_xT\der y,\nonumber
\end{eqnarray}
and define the type $(1,2,3)$ para-CR structure by extending these
forms to a class of para-CR forms 
$[\la,\mu_1,\mu_2,\nu_1,\nu_2,\nu_3]$ on ${\mathcal J}^2$ via:
\be
\bma
\la\\
\nu_1\\
\nu_2\\
\nu_3\\
\mu_1\\
\mu_2
\ema\to\bma
\la'\\
\nu_1'\\
\nu_2'\\
\nu_3'\\
\mu_1'\\
\mu_2'
\ema =\bma
a&0&0&0&0&0\\
b_{1}&f_{11}&f_{12}&f_{13}&0&0\\
b_{2}&f_{21}&f_{22}&f_{23}&0&0\\
b_{3}&f_{31}&f_{32}&f_{33}&0&0\\
c_{1}&0&0&0&h_{11}&h_{12}\\
c_{2}&0&0&0&h_{21}&h_{22}
\ema
\bma
\la\\
\nu_1\\
\nu_2\\
\nu_3\\
\mu_1\\
\mu_2
\ema,\label{po}
\ee
where $a,b_A,c_\alpha,f^A_{~B},h^\alpha_{~\beta}$ are arbitrary parameters
such that $a\det(f^A_{~b})\det(h^\alpha_{~\beta})\neq 0$. Let us now define, as
before, the lifted coframe 
\be
\bma
\theta^4\\
\theta^1\\
\theta^2\\
\theta^3\\
\Om_3\\
\Om_2
\ema=\bma
a&0&0&0&0&0\\
b_{1}&f_{11}&f_{12}&f_{13}&0&0\\
b_{2}&f_{21}&f_{22}&f_{23}&0&0\\
b_{3}&f_{31}&f_{32}&f_{33}&0&0\\
c_{1}&0&0&0&h_{11}&h_{12}\\
c_{2}&0&0&0&h_{21}&h_{22}
\ema
\bma
\la\\
\nu_1\\
\nu_2\\
\nu_3\\
\mu_1\\
\mu_2
\ema.\label{poi}
\ee
We ask which conditions the functions $R$ and $T$ must satsify so that
the forms $(\theta^1,\theta^2,\theta^3,\theta^4,\Om_2,\Om_3)$ are 
forced to satisfy the system (\ref{fyu}) with some auxiliary forms
$(\Om_1,\Om_4,\Om_5,\Om_6,A)$, on a certain 11-dimensional manifold
$P$, where $(\theta^i,\Omega_\mu,A)$ would serve as a coframe. As a
first result in this respect we have 
the following theorem.    
\begin{theorem}\label{poj}
A neccessary condition for the 
equations $z_{xx}=R(x,y,z,z_x,z_y,z_{xy})$ $\&$
$z_{yy}=T(x,y,z,z_x,z_y,z_{xy})$ satisfying $D_x^2T=D_y^2R$,
$1-R_sT_s>0$ 
to admit forms (\ref{po})-(\ref{poi}) with  
\begin{eqnarray}
&&\der\theta^4\dz\theta^4=(\Om_2\dz\theta^1+\Om_3\dz\theta^2)\dz\theta^4\label{uiu1}\\
&&\der\theta^1\dz\theta^1\dz\theta^2\dz\theta^4\dz\Om_3=0\label{uiu2}\\
&&\der\theta^2\dz\theta^1\dz\theta^2\dz\theta^4\dz\Om_2=0\label{uiu3}\\
&&\der\theta^1\dz\theta^1\dz\theta^3\dz\theta^4\dz\Om_3=0\label{uiu4}\\
&&\der\theta^2\dz\theta^2\dz\theta^3\dz\theta^4\dz\Om_2=0\label{uiu5}
\end{eqnarray} 
is that the functions $R=R(x,y,z,z_x,z_y,z_{xy})$ and
$T=T(x,y,z,z_x,z_y,z_{xy})$ satisfy 
$$J_1\equiv 0,\quad\quad\&\quad\quad J_2\equiv 0,$$
where 
\begin{eqnarray*}
&&J_1=(R_sT_s-4)D_xR_s+R_s(2D_yR_s-R_sD_xT_s)+\\
&&8R_q-6R_qR_sT_s+4R_pR_s+2R_s^2T_q-2R_pR_s^2T_s+2R_s^3T_p\\
&&\\
&&J_2=(R_sT_s-4)D_yT_s+T_s(2D_xT_s-T_sD_yR_s)+\\
&&8T_p-6R_sT_pT_s+4T_qT_s+2R_pT_s^2-2R_sT_qT_s^2+2R_qT_s^3.
\end{eqnarray*}
\end{theorem}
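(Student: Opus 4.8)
The plan is to establish the stated implication directly: assuming that the lifted forms (\ref{poi}) can be chosen so that (\ref{uiu1})--(\ref{uiu5}) hold, I extract from those five equations the two scalar conditions $J_1\equiv 0$ and $J_2\equiv 0$. The essential first step is to compute the exterior derivatives of the six contact forms (\ref{135}) in the contact coframe $(\lambda,\nu_1,\nu_2,\nu_3,\mu_1,\mu_2)$. Using $\der z=\lambda+p\mu_1+q\mu_2$, $\der p=\nu_1+R\mu_1+s\mu_2$, $\der q=\nu_2+s\mu_1+T\mu_2$ and $\der s=\nu_3+(D_yR)\mu_1+(D_xT)\mu_2$ together with the implicit definitions (\ref{pndha}), one checks that $\der R$, $\der T$, $\der(D_yR)$ and $\der(D_xT)$ all expand cleanly, with their $\mu_1$- and $\mu_2$-coefficients being exactly the total derivatives $D_x(\cdot)$ and $D_y(\cdot)$. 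This gives $\der\lambda=\mu_1\dz\nu_1+\mu_2\dz\nu_2$ and analogous expressions for $\der\nu_1,\der\nu_2$ with coefficients $R_z,R_p,R_q,R_s$ (resp.\ $T_z,T_p,T_q,T_s$). The decisive point is $\der\nu_3$: its potential $\mu_1\dz\mu_2$ term has coefficient $D_y^2R-D_x^2T$, which vanishes precisely by the finite-type hypothesis (\ref{pnd}); the assumption $1-R_sT_s>0$, i.e.\ (\ref{ndg}), is what makes $D_yR,D_xT$ and hence these second total derivatives explicit well-defined functions on the jet space to begin with.

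Next I would substitute the lift (\ref{poi}) and impose (\ref{uiu1}). Since $\der\theta^4=\der a\dz\lambda+a\,\der\lambda$, a short computation gives $\der\theta^4\dz\theta^4=a^2(\mu_1\dz\nu_1+\mu_2\dz\nu_2)\dz\lambda$, while the target $(\Om_2\dz\theta^1+\Om_3\dz\theta^2)\dz\theta^4$ is bilinear in the parameters $h_{ij},f_{ij}$. Matching the six coefficients of $\mu_a\dz\nu_b\dz\lambda$ turns (\ref{uiu1}) into a system of algebraic relations among $a,f_{ij},h_{ij}$ (the exact analogue of the normalizations (\ref{fo}) of the $(1,1,2)$ case), solvable because the $3\times3$ block $(f_{ij})$ and the $2\times2$ block $(h_{ij})$ are invertible. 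These relations fix the scale of $\theta^4$ and eliminate the redundant gauge directions, leaving a residual group just as in Section \ref{sse}.

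With these normalizations in hand I would compute the four top-degree $6$-forms appearing in (\ref{uiu2})--(\ref{uiu5}); each equals a scalar multiple of the volume form $\lambda\dz\nu_1\dz\nu_2\dz\nu_3\dz\mu_1\dz\mu_2$, the scalar being a nonvanishing product of parameters times a polynomial in $R,T$ and their first total derivatives $D_xR_s,D_yR_s,D_xT_s,D_yT_s$ and fibre derivatives $R_p,R_q,T_p,T_q$. Condition (\ref{uiu2}) extracts the $\theta^3\dz\Om_2$ component of $\der\theta^1$ and (\ref{uiu4}) the $\theta^2\dz\Om_2$ component; after using the relations from (\ref{uiu1}) and absorbing everything the remaining free parameters $b_i,c_i$ can absorb, the non-removable remainder of the two is the single relation $J_1\equiv 0$. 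The conditions (\ref{uiu3}),(\ref{uiu5}) arise from (\ref{uiu2}),(\ref{uiu4}) by the manifest discrete symmetry $x\leftrightarrow y$, $R\leftrightarrow T$, $p\leftrightarrow q$, $\nu_1\leftrightarrow\nu_2$, $\theta^1\leftrightarrow\theta^2$, $\Om_2\leftrightarrow\Om_3$, under which $J_1\leftrightarrow J_2$; they therefore yield $J_2\equiv 0$. Since the parameter prefactors never vanish, (\ref{uiu2})--(\ref{uiu5}) force $J_1\equiv 0$ and $J_2\equiv 0$, which is the claim.

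The main obstacle is computational rather than conceptual: the clean evaluation of $\der\nu_3$ through the second total derivatives with $D_x^2T=D_y^2R$, and then the heavy bookkeeping needed to collect the many torsion terms in $\der\theta^1,\der\theta^2$, insert the algebraic consequences of (\ref{uiu1}), and verify that the part not removable by the residual parameters is \emph{exactly} $J_1$ and $J_2$ and not merely a nonzero multiple or combination of them. Two consistency checks guide and halve the work: the $x\leftrightarrow y$ symmetry, which must interchange the two branches, and the flat limit $R=T=0$, in which the structure equations reduce to (\ref{sy1p}) and (\ref{uiu2})--(\ref{uiu5}) hold identically.
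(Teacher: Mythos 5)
Your opening steps (the structure equations for the contact coframe, the computation $\der\theta^4\dz\theta^4=a^2(\mu_1\dz\nu_1+\mu_2\dz\nu_2)\dz\lambda$, and the observation that (\ref{uiu1}) forces $f_{13}=f_{23}=0$ and fixes the $h_{ij}$) agree with the paper. The genuine gap is in the middle of your plan: you claim that (\ref{uiu2}) and (\ref{uiu4}) together leave $J_1\equiv 0$ as the non-removable remainder, with (\ref{uiu3}), (\ref{uiu5}) then giving $J_2\equiv 0$ by the $x\leftrightarrow y$ symmetry. That is not what the computation produces. After (\ref{uiu1}) is imposed, conditions (\ref{uiu2}) and (\ref{uiu3}) are not conditions on $R$ and $T$ at all: they read
$$2f_{11}f_{12}+R_sf_{11}^2+T_sf_{12}^2=0,\qquad 2f_{21}f_{22}+R_sf_{21}^2+T_sf_{22}^2=0,$$
i.e.\ quadratic equations for the ratios $f_{11}/f_{12}$ and $f_{21}/f_{22}$, solved by the parameter normalizations (\ref{iwi2}) with $w=\sqrt{1-R_sT_s}$. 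This is precisely where the hypothesis $1-R_sT_s>0$ enters (real roots of the quadratics); it is not, as you state, the condition (\ref{ndg}) needed to make $D_x,D_y$ well defined, which only requires $R_sT_s\neq 1$. Relatedly, the parameters $b_i,c_i$ play no absorbing role in any of the five conditions (they are only normalized later, in Theorem \ref{alb}), so the "absorption" step you invoke is spurious.

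Second, once (\ref{iwi2}) is in place, conditions (\ref{uiu4}) and (\ref{uiu5}) each yield a \emph{mixed} linear combination of both invariants, namely
$$\big(1+3w^2\pm3w\pm w^3\big)J_1-R_s^3J_2=0,\qquad \big(1+3w^2\mp3w\mp w^3\big)J_1-R_s^3J_2=0,$$
as in (\ref{iwi3}); neither branch decouples into "$J_1$ only" or "$J_2$ only", and the naive $R\leftrightarrow T$ symmetry is no longer manifest because the normalizations (\ref{iwi2}) single out $R_s$ and involve the $\pm$ choices. The conclusion $J_1\equiv J_2\equiv 0$ therefore requires a final linear-algebra step that is entirely absent from your plan: the coefficient matrix of $(J_1,J_2)$ in the two relations has determinant $\mp2R_s^3w(3+w^2)\neq 0$, so both invariants must vanish. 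Without this determinant argument your computation terminates with two coupled relations and no way to separate them, so the proposal as written does not close.
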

\begin{proof}
We force the forms $(\theta^1,\theta^2,\theta^3,\theta^4,\Om_2,\Om_3)$
to satisfy (\ref{uiu1})-(\ref{uiu5}) in the following steps:

First we fix coefficients $f_{23}$, $f_{33}$, $h_{11}$, $h_{12}$,
  $h_{21}$ and $h_{22}$ by forcing $\der\theta^4$ to satsify (\ref{uiu1}). For
  this to be satisfied we must take: \be f_{13}=f_{23}=0,\label{iwi}\ee
and 
\be
\begin{aligned}
&&h_{11}=\frac{af_{12}}{f_{12}f_{21}-f_{11}f_{22}},\quad
  h_{12}=-\frac{af_{11}}{f_{12}f_{21}-f_{11}f_{22}},\\
&&h_{21}=-\frac{af_{22}}{f_{12}f_{21}-f_{11}f_{22}},\quad 
h_{22}=\frac{af_{21}}{f_{12}f_{21}-f_{11}f_{12}}.
\end{aligned}\label{iwi1}\ee

After these normalizations we have
$$\der\theta^1\dz\theta^1\dz\theta^2\dz\theta^4\dz\Om_3=\frac{2f_{11}f_{12}+R_sf_{11}^2+T_sf_{12}^2}{af_{33}}\Om_2\dz\Om_3\dz\theta^1\dz\theta^2\dz\theta^3\dz\theta^4$$
and
$$\der\theta^2\dz\theta^1\dz\theta^2\dz\theta^4\dz\Om_2=\frac{2f_{21}f_{22}+R_sf_{21}^2+T_sf_{22}^2}{af_{33}}\Om_3\dz\Om_2\dz\theta^1\dz\theta^2\dz\theta^3\dz\theta^4.$$
Thus to satisfy (\ref{uiu2}) and (\ref{uiu3}) we must equate to zero
the right hand sides of these equations. It is the moment, when we
need the assumption $$1-R_sT_s>0.$$ When this is assumed we achieve
(\ref{uiu2}) and (\ref{uiu3}) by normalizing:
\be
f_{21}=\frac{-1\pm w}{R_s}f_{22},\quad\quad f_{11}=\frac{-1\mp
  w}{R_s}f_{12},\quad\quad w=\sqrt{1-R_sT_s}.\label{iwi2}\ee

With these normalizations we now have
\be
\begin{aligned}
&\der\theta^1\dz\theta^1\dz\theta^3\dz\theta^4\dz\Om_3=\\
&f_{12}^2\frac{\big(1+3w^2\pm3w\pm w^3\big)J_1- R_s^3J_2}{4
    af_{22}R_s^2w^2}\Om_2\dz\Om_3\dz\theta^1\dz\theta^2\dz\theta^3\dz\theta^4\\
&\\
&\der\theta^2\dz\theta^2\dz\theta^3\dz\theta^4\dz\Om_2=\\
&f_{22}^2\frac{\big(1+3w^2\mp 3w\mp w^3\big)J_1-
  R_s^3J_2}{4
    af_{12}R_s^2w^2}\Om_2\dz\Om_3\dz\theta^1\dz\theta^2\dz\theta^3\dz\theta^4.
\end{aligned}\label{iwi3}\ee
The right hand sides of these equations vanish identically if
and only if 
\begin{eqnarray*}
&&\big(1+3w^2\pm3w\pm w^3\big)J_1-R_s^3J_2\equiv 0\\
&&\big(1+3w^2\mp3w\mp w^3\big)J_1-
  R_s^3J_2\equiv 0.
\end{eqnarray*}
Since
$$\det\bma1+3w^2\pm 3w\pm w^3&-R_s^3 \\1+3w^2\mp 3w\mp w^3&-
  R_s^3 \ema=\mp 2R_s^3w(3+w^2)\neq 0$$
this is only possible if and only if $J_1\equiv J_2\equiv 0$, which finishes the proof.
\end{proof}

The meaning of vanishing of both $J_1$ and $J_2$, known as \emph{Newman's
  metricity conditions} \cite{et,nemo}, is given in the following theorem.  
\begin{theorem}\label{alb}
If conditions
$$J_1\equiv 0\quad\quad\&\quad\quad J_2\equiv 0$$
are satisfied then one can normalize the forms
$(\theta^1,\theta^2,\theta^3,\theta^4,\Om_2,\Om_3)$ in such a way
that they, together
with the auxiliary forms $(\Om_1,\Om_2,\Om_5,\Om_6,A)$, satisfy
\begin{eqnarray}
&&\der\theta^1=(\Omega_1-\tfrac12
A)\dz\theta^1-\Omega_3\dz\theta^3-\Omega_5\dz\theta^4+ t^1_{23}\theta^2\dz\theta^3\nonumber\\
&&\der\theta^2=(-\Omega_1-\tfrac12
A)\dz\theta^2-\Omega_2\dz\theta^3-\Omega_4\dz\theta^4+ t^2_{13}\theta^1\dz\theta^3\label{bla}\\
&&\der\theta^3=\Omega_4\dz\theta^1+\Omega_5\dz\theta^2+(\Omega_6-\tfrac12A)\dz\theta^3\nonumber\\
&&\der\theta^4=\Omega_2\dz\theta^1+\Omega_3\dz\theta^2+(-\Omega_6-\tfrac12A)\dz\theta^4,\nonumber
\end{eqnarray}
where \begin{eqnarray}
&&t^1_{23}=-\frac{a}{8f^2_{22}w^4}\big(R_{ss}(1\pm w)^2+T_{ss}R_s^2\big),\label{bla1}\\
&&t^2_{13}=-\frac{a}{8f^2_{32}w^4}\big(R_{ss}(1\mp w)^2+T_{ss}R_s^2\big).\nonumber
\end{eqnarray}
With this normalization the bilinear form
$G=2(\theta^1\theta^2+\theta^3\theta^4)$ descends to a conformal
$(+,+,-,-)$ signature metric $[g]$ on the 4-dimensional leaf space $\mathcal
S$ of the foliation
defined by the integrable distribution anihilating
$(\theta^1,\theta^2,\theta^3,\theta^4)$.

Modulo a discrete point transformation, interchanging $\theta^1$ with
$\theta^2$, the vanishing or not of at least one of 
$$K_1=R_{ss}(1-\sqrt{1-R_sT_s})^2+T_{ss}R_s^2,\quad\quad K_2=R_{ss}(1+\sqrt{1-R_sT_s})^2+T_{ss}R_s^2,$$
is a point invariant property of the
corresponding system $z_{xx}=R(x,y,z,z_x,z_y,z_{xy})$ $\&$ 
$z_{yy}=T(x,y,z,z_x,z_y,z_{xy})$. In particular, the simultaneous
vanishing of $R_{ss}$ and $T_{ss}$, $R_{ss}\equiv T_{ss}\equiv 0$, is
a point invariant property of the system.
\end{theorem}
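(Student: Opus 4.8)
The plan is to continue the Cartan reduction exactly where the proof of Theorem~\ref{poj} stopped. After the normalizations \eqref{iwi}, \eqref{iwi1}, \eqref{iwi2}, the six forms $(\theta^1,\theta^2,\theta^3,\theta^4,\Om_2,\Om_3)$ are pinned down up to the residual group parameters, and I would now compute $\der\theta^1,\dots,\der\theta^4$ in this partially reduced frame under the standing hypothesis $J_1\equiv J_2\equiv 0$. The idea is to introduce the auxiliary $1$-forms $\Om_1,\Om_4,\Om_5,\Om_6,A$ one at a time, each chosen so as to absorb the maximal amount of torsion from the corresponding $\der\theta^i$, modelling the flat structure equations \eqref{sy1p}. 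The role of $J_1\equiv J_2\equiv 0$ is, as in Theorem~\ref{poj}, to force all torsion in $\der\theta^3$ and $\der\theta^4$ to be absorbable, so that these two equations acquire exactly their flat form; the only torsion that resists removal sits in $\der\theta^1$ and $\der\theta^2$. A direct (if lengthy) computation then identifies the surviving coefficients as the $t^1_{23}$ and $t^2_{13}$ displayed in \eqref{bla1}, expressed through $R_{ss},T_{ss},R_s,T_s$ and $w=\sqrt{1-R_sT_s}$.

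The descent of the conformal metric I would handle exactly as in the proof of Theorem~\ref{pfa1}. From \eqref{bla} every $\der\theta^i$ is a sum of terms each containing at least one $\theta^j$ -- this remains true of the torsion terms $t^1_{23}\theta^2\dz\theta^3$ and $t^2_{13}\theta^1\dz\theta^3$ -- so $(\theta^1,\theta^2,\theta^3,\theta^4)$ generate a closed differential ideal, and by Frobenius their common annihilator is an integrable rank-$7$ distribution foliating $P$ over a $4$-dimensional leaf space $\mathcal S$. For any vector field $Z$ tangent to the fibres one has $Z\hook\theta^i\equiv 0$, so the torsion terms drop out of the contraction and the computation of $\mathcal{L}_Z G$ with $G=2(\theta^1\theta^2+\theta^3\theta^4)$ is word-for-word that of Theorem~\ref{pfa1}, yielding $\mathcal{L}_Z G=-(Z\hook A)\,G$. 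Hence $G$ is conformally invariant along the fibres and descends to a conformal class $[g]$ of signature $(+,+,-,-)$ on $\mathcal S$.

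For the point-invariance claim I would argue that $(t^1_{23},t^2_{13})$ is an intrinsic (lowest-order) torsion of the normalized coframe: the whole reduction above is canonical given the para-CR structure, the only freedom being the discrete interchange $\theta^1\leftrightarrow\theta^2$ recorded by the two sign choices $\pm$ in \eqref{iwi2}. A point transformation of $(x,y,z)$ induces a para-CR equivalence, under which the canonical coframe -- and hence its torsion -- transforms tensorially, so the locus $\{t^1_{23}=0\}$ (respectively $\{t^2_{13}=0\}$) is preserved up to that interchange. Reading off \eqref{bla1}, $t^1_{23}$ and $t^2_{13}$ are nonzero multiples of $K_1$ and $K_2$ in one order or the other, which gives the first assertion. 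For the last statement I note that $R_{ss}\equiv T_{ss}\equiv 0$ forces $K_1\equiv K_2\equiv 0$, hence $t^1_{23}\equiv t^2_{13}\equiv 0$; conversely the $2\times2$ linear system $\{K_1=0,\,K_2=0\}$ in the unknowns $(R_{ss},T_{ss})$ has determinant $-4wR_s^2$, nonzero in the generic case, so $K_1=K_2=0$ returns $R_{ss}=T_{ss}=0$. Thus the condition $R_{ss}\equiv T_{ss}\equiv 0$ coincides with the manifestly para-CR-invariant vanishing of the full torsion $(t^1_{23},t^2_{13})$, and is therefore point invariant. The main obstacle is the first part: carrying the Cartan absorption far enough to see that \emph{precisely} two torsion coefficients survive, and that they equal \eqref{bla1}, is a substantial computation, and one must check that the choices of $\Om_1,\Om_4,\Om_5,\Om_6,A$ do not secretly reintroduce torsion into the already-normalized $\der\theta^3$ and $\der\theta^4$.
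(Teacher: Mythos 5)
Your proposal is correct and follows essentially the same route as the paper's own proof: the same Cartan absorption/normalization scheme modelled on the flat structure equations (\ref{sy1p}) leading to (\ref{bla})--(\ref{bla1}), the same descent argument borrowed verbatim from Theorem \ref{pfa1} (the torsion terms contract to zero against vertical vectors, so ${\mathcal L}_ZG=-(Z\hook A)G$ still holds), and the point-invariance of $K_1$, $K_2$ read off from the canonically determined torsion up to the discrete $\theta^1\leftrightarrow\theta^2$ interchange. The compatibility check you flag at the end---that the choices of $\Om_1,\Om_4,\Om_5,\Om_6,A$ must not reintroduce torsion into the already-normalized $\der\theta^3$ and $\der\theta^4$---is exactly what the paper's ``two-loop'' normalization handles, by re-running the whole procedure after the final determination $b_3=-B/A$ and verifying it is consistent with the earlier constraint on $\der b_3$.
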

\begin{proof}
If we prove that the forms $(\theta^1,\theta^2,\theta^3,\theta^4)$ can
be forced to satisfy the system
(\ref{bla}) on some $11$-dimensional manifold $P$, where the forms
$(\theta^i,\Omega_\mu,A)$ are linearly independent, then similarly as
in the proof of Proposition \ref{fiu}, we will have a foliation of $P$
by the integral leaves of a 7-dimensional \emph{integrable} distribution annihilated by
$(\theta^1,\theta^2,\theta^3,\theta^4)$. Moreover because 
(\ref{bla}) differs from (\ref{sy1p}) by only the
appearence of $\theta^i\dz\theta^k$ terms, the Lie derivtives of $G$
with respect to the vectors tangent to the foliation, will be given by
the same expressions as in the proof of Theorem \ref{pfa1}. Thus, if
we prove (\ref{bla}), we will get the conclusion that 
the leaf space $\mathcal S$ is equipped with the conformal split
signature metrics $[g]$ to which $G$ descends.

The procedure of bringing the forms $(\theta^i)$ to the form in which they
satisfy (\ref{bla}) is based on Cartan's equivalence method. The
Cartan process of normalizing the group coefficients
$a,b_i,c_i,f_{ij},h_{ij}$ has two loops, the first of which ends
after normalization of the coeficient $b_1$.

{\bf The first loop}: 
We first impose the conditions (\ref{uiu1})-(\ref{uiu5}), as in the
previous proof, and as before reduce the possible freedom in the choice of
$(\theta^1,\theta^2,\theta^3,\theta^4,\Om_2,\Om_3)$  to
\be
\bma
\theta^4\\
\theta^1\\
\theta^2\\
\theta^3\\
\Om_3\\
\Om_2
\ema=\bma
a&0&0&0&0&0\\
b_{1}&\frac{-1\mp w}{R_s}f_{12}&f_{12}&0&0&0\\
b_{2}&\frac{-1\pm w}{R_s}f_{22}&f_{22}&0&0&0\\
b_{3}&f_{31}&f_{32}&f_{33}&0&0\\
c_{1}&0&0&0&\pm \frac{aR_s}{2wf_{22}}&\pm \frac{a(1\pm w)}{2wf_{22}}\\
c_{2}&0&0&0&\mp \frac{aR_s}{2wf_{12}}&\pm\frac{a(-1\pm w)}{2wf_{12}}
\ema
\bma
\la\\
\nu_1\\
\nu_2\\
\nu_3\\
\mu_1\\
\mu_2
\ema\label{lop}
\ee
In the next step we impose the condition 
$\der\theta^1\dz\theta^1\dz\theta^2\dz\theta^4=-\Om_3\dz\theta^1\dz\theta^2\dz\theta^3\dz\theta^4$. 
This
gives the normalization
\be
f_{33}=\frac{2w^2f_{12}f_{22}}{aR_s}\label{iwi4}\ee
and implies also that  
$\der\theta^2\dz\theta^1\dz\theta^2\dz\theta^4=-\Om_2\dz\theta^1\dz\theta^2\dz\theta^3\dz\theta^4$.

Then we require that
$\der\theta^2\dz\theta^2\dz\theta^3\dz\theta^4=0$. This determines
$b_2$ as:
\be
\begin{aligned}
&b_2=\pm \frac{a(f_{32}+f_{31}R_s\mp f_{32}w)}{2f_{12}w}\pm\frac{f_{22}}{R_s^2w(3+w^2)}\times\\
&\Big((1\pm w)(1-w^2)^2R_q + (1\mp w)^2R_s(D_yR_s+T_pR_s^2+R_s
  T_q)+\\&(1\pm w)R_s(R_sD_xT_s+R_p(1-w^2))\Big).\end{aligned}\label{iwi5}   
\ee
Similarly the condition 
$\der\theta^1\dz\theta^1\dz\theta^3\dz\theta^4=0$ determines $b_1$ as:
\be
\begin{aligned}
&b_1=\mp \frac{a(f_{32}+f_{31}R_s\pm f_{32}w)}{2f_{22}w}\mp\frac{f_{12}}{R_s^2w(3+w^2)}\times\\
&\Big((1\mp w)(1-w^2)^2R_q + (1\pm w)^2R_s(D_yR_s+T_pR_s^2+R_s
  T_q)+\\&(1\mp w)R_s(R_sD_xT_s+R_p(1-w^2))\Big) .\end{aligned}\label{iwi6}
\ee
After these normalizations have been imposed, we have to associate the
remaining undetermined parameters
$a,f_{12},f_{22},f_{31},f_{32},b_3,c_1$ and $c_2$ with the auxiliary
forms $\Om_1,\Om_4,\Om_5,\Om_6$ and $A$. 

This is done by first observing that the equation 
\be\der\theta^4=\Om_{2}\dz\theta^1+\Om_3\dz\theta^2-(\Om_6+\tfrac12A)\dz\theta^4.\label{1u}\ee
is equivalent to 
\be
-\Om_6-\tfrac12A=\frac{\der a}{a}-\frac{b_1}{a}\Om_2-\frac{b_2}{a}\Om_3+\frac{c_2}{a}\theta^1+\frac{c_1}{a}\theta^2+u_{111}\theta^4,\label{u11}
\ee
with $b_1$ and $b_2$ as above, and an unspecified new parameter $u_{111}$. 

From now on we only sketch the proof, which is based on massive
computer calculations using Mathematica.

After relating $\der a$ to $\Om_6+\tfrac12 A$ we pass to the condition 
\be
\der\theta^3\dz\theta^1\dz\theta^2\dz\theta^3=0.\label{onc}\ee 
It follows that
this can only be satisfied if the differential $\der b_3$ is
\begin{eqnarray}
&&\der b_3=b_{302}\Om_2+b_{303}\Om_3+b_{306}(\Om_6+\tfrac12 A)+\label{u12}\\
&&b_{322}\der
f_{22}+b_{312}\der f_{12}+b_{331}\der f_{31}+b_{332}\der
f_{32}+b_{31}\theta^1+b_{32}\theta^2+b_{33}\theta^3+b_{34}\theta^4.\nonumber\end{eqnarray}
The functions $b_{302}$, $b_{303}$, $b_{306}$, $b_{322}$,
$b_{312}$, $b_{331}$ and $b_{332}$ are \emph{uniquely} determined by
(\ref{onc}), and are expressible in terms of $R,T$, their derivatives up to 
order two, and the
free parameters $a,f_{12},f_{22},f_{31},f_{32},b_3$. The parameters
$b_{31}$, $b_{32}$, $b_{33}$ and $b_{34}$ are arbitrary. 
Using Mathematica we found 
explicit expressions for this differential up to the undetermined
$\theta^i$ terms. Due to the enormous size
of this formula we do not quote it here. We note, however, that the
free parameters $c_1$ and $c_2$ are not present in $\der b_3$.

Now, using all the normalizations obtained so far, and $\der b_3$ as
above, we impose the condition
\be\der\theta^1\dz\theta^3\dz\theta^4=(\Om_1-\tfrac12
A)\dz\theta^1\dz\theta^3\dz\theta^4.\label{pop}\ee
This gives 
\be
\Om_1-\tfrac12 A=\der\log
f_{12}+f_{122}\Om_2+f_{123}\Om_3+f_{121}\theta^2+\dots.\label{u13}\ee
The dots here denote the undetermined $(\theta^1,\theta^3,\theta^4)$
  terms. The functions $f_{122}$, $f_{123}$ and $f_{121}$ are
\emph{uniquely} and \emph{explicitly} determined by (\ref{pop}). Similarly, imposition of 
\be\der\theta^2\dz\theta^3\dz\theta^4=(-\Om_1-\tfrac12
A)\dz\theta^2\dz\theta^3\dz\theta^4.\label{pop1}\ee
gives 
\be-\Om_1-\tfrac12 A=\der\log
f_{22}+f_{222}\Om_2+f_{223}\Om_3+f_{221}\theta^1+\dots\label{u14}\ee
with \emph{uniquely} determined functions $f_{222}$, $f_{223}$ and
$f_{221}$, and dots denoting the undetermined 
$(\theta^2,\theta^3,\theta^4)$ terms.

Now the condition
\be
\der\theta^3\dz\theta^1\dz\theta^2=(\Om_6-\tfrac12 A)\dz\theta^1\dz\theta^2\dz\theta^3\label{popa}\ee
is used to reduce the freedom in the choice of the undetermined $\theta^4$
terms in (\ref{u11}), (\ref{u13}), (\ref{u14}) and the undetermined
$\theta^3$ term in (\ref{u12}). This
\emph{one scalar} condition gives a linear relation between the coefficient
$u_{111}$, the coefficient $b_{33}$ at the $\theta^3$ term in (\ref{u12}), and the two coefficients at $\theta^4$ in (\ref{u13}) and
(\ref{u14}). Denoting the last two 
coefficients by $f_{124}$ and $f_{224}$  respectively, we use
(\ref{popa}) to obtain $b_{33}$ as a linear combination (with coefficients
depending on $R$, $T$, their derivatives, and the free parameters such
as $a$, etc.) of $u_{111}$, $f_{124}$
and $f_{224}$. 
 
At this stage we have associated the forms $\Om_6$, $\Om_1$, and $A$ to 
nonsingular linear combinations of the differentials $\der a$, $\der
f_{22}$ and $\der f_{12}$. The still unknown forms $\Om_4$ and $\Om_5$ can
now be related to $\der f_{31}$ and $\der f_{32}$ by imposing the
condition 
\be
\der\theta^3=\Om_4\dz\theta^1+\Om_5\dz\theta^2+(\Om_6-\tfrac12
A)\dz\theta^3.\label{2u}\ee
The imposition of this condition results in 
\begin{eqnarray}
&&\Om_4=\mp\frac{R_s}{2f_{12}w}\der f_{31}\mp\frac{1\mp w}{2f_{12}w}\der f_{32}+\dots+\al\theta^1+\beta\theta^2\label{u15}\\
&&\Om_5=\pm\frac{R_s}{2f_{22}w}\der f_{31}\pm\frac{1\pm w}{2f_{22}w}\der f_{32}+\dots+\beta\theta^1+\gamma\theta^2,\label{u16}
\end{eqnarray}
where the doted terms are totally and \emph{uniquely} determined by $R$, $T$, their
derivatives, and the previous choices. Here $\alpha,\beta,\gamma$ are
new free parameters. 

We stress that we calculated explicitly the right hand sides of
equations (\ref{u12}), (\ref{u13}), (\ref{u14}), (\ref{u15}) and
(\ref{u16}). We do not quote them here in full generality due to the
lack of space. But now, having these right hand sides calculated, we
can calculate $\der\theta^1$ and $\der\theta^2$. It follows from these
calculations that
$$\der\theta^1\dz\theta^1\dz\theta^2\dz\theta^3\dz\Om_5=H\Om_3\dz\Om_5\dz\theta^1\dz\theta^2\dz\theta^3\dz\theta^4,$$
and 
$$\der\theta^2\dz\theta^1\dz\theta^2\dz\theta^3\dz\Om_4=H\Om_2\dz\Om_4\dz\theta^1\dz\theta^2\dz\theta^3\dz\theta^4.$$
The function $H$ appearing in these equations has the form 
$$H=A b_3+ B,$$
where $A\neq 0$ and $B$ are functions of $R$, $T$, their
derivatives up to order \emph{three}, and only \emph{five} free
parameters $a, f_{12}, f_{22}, f_{31}$ and $f_{32}$. To satisfy
the first two of the equations (\ref{bla}) we need $H\equiv 0$. This
gives the normalization of the parameter $b_3$ as
$$b_{3}=-\frac{B}{A}.$$
This, when compared with $\der b_3$ given by (\ref{u12}), and
everything after this equation, might bring 
compatibility conditions. Thus we are at the end of the first loop: we have to
return to the formula (\ref{u12}) with $b_3=-B/A$ and repeat all the
steps after this formula, inserting this $b_3$ everywhere.

Note that as the result of the first loop we have forms
$(\theta^1,\theta^2,\theta^3,\theta^4)$ satisfying the last two
equations (\ref{bla}). 
 
{\bf The second loop:} Now we start with the forms (\ref{lop}), in
which we use $f_{33}$, $b_1$, $b_2$ and $b_3$ determined in the first
loop. Then, as before, (\ref{u11}) guarantees that (\ref{1u}) is
valid, and (\ref{onc}) is satisfied \emph{automatically}. This means 
that we do not need equation (\ref{u12}) anymore. Equations
 (\ref{pop}) and (\ref{pop1}) as before determine $\Om_1-\tfrac12 A$
and $-\Om_2-\tfrac12 A$, so that (\ref{u13}) and (\ref{u14}) are
satisfied, with new but still explicitly determined $f_{122},f_{123},f_{121},f_{222},f_{223},f_{221}$. Since now we do not have (\ref{u12}), we use
(\ref{popa}) to determine $u_{111}$. After
this, we calculate $\der\theta^3$. This satisfies (\ref{2u}) provided
that $\Om_4$ and $\Om_5$ are as in (\ref{u15}) and (\ref{u16}), with
everything determined except the parameters
$\alpha,\beta,\gamma$. Choosing these $\Om_4$ and $\Om_5$ we have also
have 
$$\der\theta^1\dz\theta^1\dz\theta^2\dz\theta^3\dz\Om_5=\der\theta^2\dz\theta^1\dz\theta^2\dz\theta^3\dz\Om_4=0.$$
It turns out that out of the \emph{nine} undetermined parameters:
$\al,\beta,\gamma$ and the ones in the dotted terms in (\ref{u14}) and
(\ref{u13}), \emph{eight} are totally determined by the requirement
that $\der\theta^1=(\Omega_1-\tfrac12
A)\dz\theta^1-\Omega_3\dz\theta^3-\Omega_5\dz\theta^4+
t^1_{23}\theta^2\dz\theta^3$ $\&$ 
$\der\theta^2=(-\Omega_1-\tfrac12
A)\dz\theta^2-\Omega_2\dz\theta^3-\Omega_4\dz\theta^4+
t^2_{13}\theta^1\dz\theta^3$. If this condition is imposed the
remaining free parameters are
$a,f_{12},f_{22},f_{31},f_{32},c_{1},c_2,\beta$. It also follows that
this condition forces the coefficients $t^1_{23}$ and $t^2_{13}$ 
to be given by (\ref{bla1}). This finishes the proof. 
\end{proof}
\begin{remark}\label{rbl}
Further conditions
\begin{eqnarray*}
&&\der\Om_2\dz\theta^1\dz\theta^3\dz\theta^4\dz\Om_2=0\\
&&\der\Om_3\dz\theta^2\dz\theta^3\dz\theta^4\dz\Om_3=0,
\end{eqnarray*}
imposed on the system
$(\theta^1,\theta^2,\theta^3,\theta^4,\Om_2,\Om_3)$ uniquely determine
parameters $c_1$ and $c_2$. To fix the parameter $\beta$ we use the
requirement that the differential $\der\Om_2$ does not involve a 
$\Om_2\dz\theta^4$ term. After imposing this, the
remaining free parameters in the definitions of $(\theta^i,\Om_\mu,A)$
are \emph{only}: 
$a,f_{12},f_{22},f_{31},f_{32}$. This shows that the system for a
$(1,2,3)$ type para-CR structure with $J_1\equiv J_2\equiv 0$
naturally 
\emph{closes} on $P$, and that $P$ can be locally parametrized by
$(x,y,z,p,q,s)$ (the base) and $(a,f_{12},f_{22},f_{31},f_{32})$ (fibers). 
\end{remark}
\begin{remark}
Theorem \ref{alb} assures that the solution space of a pair of PDEs 
$z_{xx}=R(x,y,z,z_x,z_y,z_{xy})$ $\&$ $z_{yy}=T(x,y,z,z_x,z_y,z_{xy})$
satisfying $D^2_xT\equiv D^2_yR$ and $J_1\equiv J_2\equiv 0$ is 
naturally equipped with a 
$(+,+,-,-)$ signature conformal structure, and that this conformal
structure is a \emph{point invariant} of the corresponding pair of
PDEs. However the appearence of the \emph{torsion} terms $t^1_{23}$
and $t^2_{13}$ in (\ref{bla}), as well as the \emph{nonhorizontal} terms,
such as e.g. $\Om_2\dz\theta^1$ in $\der\Om_2$, show, that there might
be \emph{many point nonequivalent} PDEs $z_{xx}=R(x,y,z,z_x,z_y,z_{xy})$ $\&$
$z_{yy}=T(x,y,z,z_x,z_y,z_{xy})$ with 
$D^2_xT\equiv D^2_yR$ and $J_1\equiv J_2\equiv 0$, which correspond to
\emph{the same} conformal class of metrics. Although the forms 
$(\Om_1,\Om_2,\Om_3,\Om_4,\Om_5,\Om_6,A)$ together with
$(\theta^1,\theta^2,\theta^3,\theta^4)$, as constructed in the proof of
Theorem \ref{alb} and in the Remark \ref{rbl}, \emph{solve the
  equivalence problem} for the $(1,2,3)$ type para-CR structures
in question, they in general 
do \emph{not} define a Weyl
connection on $\mathcal S$. For this to be possible the torsion
coefficients $t^1_{23}$, $t^2_{13}$, as well as the nonhorizontal
terms in $\der\Om_2$ and $\der\Om_3$ must vanish. 
In the rest of this section we will 
find those point nonequivalent classes of equations $z_{xx}=R(x,y,z,z_x,z_y,z_{xy})$ $\&$
$z_{yy}=T(x,y,z,z_x,z_y,z_{xy})$ for which this is the case.     
\end{remark}
\begin{lemma}\label{le}
The forms (\ref{135})-(\ref{po})-(\ref{poi}) satisfy the differential
system (\ref{fyu}) if and only if they can be brought to the form in
which they satisfy:
\begin{eqnarray}
&&\der\theta^1=(\Omega_1-\tfrac12
A)\dz\theta^1-\Omega_3\dz\theta^3-\Omega_5\dz\theta^4\nonumber\\
&&\der\theta^2=(-\Omega_1-\tfrac12
A)\dz\theta^2-\Omega_2\dz\theta^3-\Omega_4\dz\theta^4\nonumber\\
&&\der\theta^3=\Omega_4\dz\theta^1+\Omega_5\dz\theta^2+(\Omega_6-\tfrac12A)\dz\theta^3\nonumber\\
&&\der\theta^4=\Omega_2\dz\theta^1+\Omega_3\dz\theta^2+(-\Omega_6-\tfrac12A)\dz\theta^4,\nonumber\\
&&\der\Omega_1=\Omega_2\dz\Omega_5-\Omega_3\dz\Omega_4-\varkappa\theta^1\dz\theta^2\nonumber\\
&&\der\Omega_2=\Omega_2\dz(\Omega_1+\Omega_6)+\varkappa\theta^2\dz\theta^4\nonumber\\
&&\der\Omega_3=(\Omega_1-\Omega_6)\dz\Omega_3+\varkappa\theta^1\dz\theta^4\label{fyul}\\
&&\der\Omega_4=\Omega_4\dz(\Omega_1-\Omega_6)+\varkappa\theta^2\dz\theta^3\nonumber\\
&&\der\Omega_5=(\Omega_1+\Omega_6)\dz\Omega_5+\varkappa\theta^1\dz\theta^3\nonumber\\
&&\der\Omega_6=\Omega_2\dz\Omega_5+\Omega_3\dz\Omega_4-\varkappa\theta^3\dz\theta^4\nonumber\\
&&\der A=0,\nonumber\\
&&\der\varkappa=\varkappa A.\nonumber
\end{eqnarray}
\end{lemma}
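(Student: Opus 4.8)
The plan is to prove the two implications separately, the backward one being immediate and the forward one carrying all the content. For the backward direction, note that (\ref{fyul}) is nothing but (\ref{fyu}) specialised to the curvature functions $F_{ij}\equiv 0$ and to $\kappa_{aij}$ read off from the single coefficient $\varkappa$: matching the two displays line by line sets $\tfrac12\kappa_{1ij}\theta^i\dz\theta^j=-\varkappa\theta^1\dz\theta^2$, $\tfrac12\kappa_{2ij}\theta^i\dz\theta^j=\varkappa\theta^2\dz\theta^4$, and so on, with $\der A=0$. Hence any forms brought to the shape (\ref{fyul}) satisfy (\ref{fyu}) in the very same gauge, and nothing beyond this term-by-term matching is needed.

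For the forward direction I would start from the fact that the $\der\theta^i$ equations of (\ref{fyu}) coincide with those of the flat model (\ref{sy1p}); in particular they are \emph{torsion free}. On the other hand, Theorem \ref{alb} shows that after the Cartan reduction the forms coming from (\ref{135})-(\ref{po})-(\ref{poi}) satisfy (\ref{bla}), whose only departure from (\ref{sy1p}) is the pair of torsion terms $t^1_{23}\theta^2\dz\theta^3$ and $t^2_{13}\theta^1\dz\theta^3$, with $t^1_{23}$, $t^2_{13}$ given by (\ref{bla1}). Requiring (\ref{fyu}) therefore forces $t^1_{23}=t^2_{13}=0$. Subtracting the two expressions in (\ref{bla1}) and using $w=\sqrt{1-R_sT_s}\neq 0$ together with $R_s\neq 0$ (which is assumed throughout the reduction, since one divides by $R_s$ already in (\ref{iwi2})), this is equivalent to
$$R_{ss}\equiv 0\quad\quad\&\quad\quad T_{ss}\equiv 0,$$
i.e. to $R$ and $T$ being affine in $s=z_{xy}$.

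Having secured $R_{ss}=T_{ss}=0$, I would invoke Remark \ref{rbl}: the conditions collected there fix $c_1$, $c_2$ and $\beta$, leaving only the fibre parameters $(a,f_{12},f_{22},f_{31},f_{32})$, so that the structure equations close on the eleven-dimensional bundle $P$. At this point the residual issue is purely the shape of the curvature, and the argument becomes computational: one differentiates the already-normalised coframe $(\theta^i,\Om_\mu,A)$ and compares with (\ref{fyu}). The main work, which I would carry out with computer algebra exactly as in the proof of Theorem \ref{alb}, is to verify that $R_{ss}=T_{ss}=0$, in the presence of the standing hypotheses $D_x^2T=D_y^2R$, $1-R_sT_s>0$ and $J_1\equiv J_2\equiv 0$, collapses the six a priori independent curvature coefficients $\kappa_{aij}$ onto a single function in the exact pattern of (\ref{fyul}), and simultaneously forces $F_{ij}\equiv 0$, that is $\der A=0$. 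In the language of Proposition \ref{fiu} this is precisely the statement that the distance curvature $\mathcal F$ vanishes and that the full curvature $\mathcal R$ becomes proportional to one scalar $\varkappa$.

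Finally, the relation $\der\varkappa=\varkappa A$ is not an additional hypothesis but a consequence of integrability of the now-closed system. Applying $\der$ to one of the curvature equations of (\ref{fyul}) — for instance to $\der\Om_2=\Om_2\dz(\Om_1+\Om_6)+\varkappa\theta^2\dz\theta^4$ — and using $\der^2=0$ together with the remaining equations isolates the term $\der\varkappa\dz\theta^2\dz\theta^4$, whose vanishing modulo the other structure equations gives $\der\varkappa\equiv\varkappa A$ off $\theta^1,\dots,\theta^4$; the absence of horizontal corrections then yields $\der\varkappa=\varkappa A$ exactly. I expect the principal obstacle to be the curvature-collapse step of the third paragraph: confirming that the third-order jet identities of $R$ and $T$ force the six curvature functions to coincide with $\pm\varkappa$ and $F_{ij}$ to vanish is a substantial but finite symbolic computation rather than a conceptual difficulty, the conceptual reductions being already supplied by Theorem \ref{alb} and Remark \ref{rbl}.
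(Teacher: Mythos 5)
Your backward direction is fine, and the first step of your forward direction --- comparing the torsion-free $\der\theta^i$ equations of (\ref{fyu}) with (\ref{bla})--(\ref{bla1}) to force $t^1_{23}=t^2_{13}=0$, hence $R_{ss}\equiv T_{ss}\equiv 0$ --- is exactly how the paper's proof begins (there it is phrased as $K_1\equiv K_2\equiv 0$). The genuine gap is in your third paragraph. You assert that $R_{ss}=T_{ss}=0$, together with $D_x^2T=D_y^2R$, $1-R_sT_s>0$ and $J_1\equiv J_2\equiv 0$, already collapses the curvature coefficients onto the pattern (\ref{fyul}) and kills $F_{ij}$, and you propose to confirm this by a symbolic computation. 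That implication is not available to you, and the computation would not confirm it: vanishing of the torsion is only part of what (\ref{fyu}) demands, since the remaining equations of (\ref{fyu}) also require each $\der\Om_\mu$ and $\der A$ to equal its group part plus \emph{purely horizontal} curvature terms quadratic in the $\theta^i$. The remark placed immediately before Lemma \ref{le} states explicitly that, after the normalizations of Theorem \ref{alb} and Remark \ref{rbl}, the differentials $\der\Om_2$ and $\der\Om_3$ generically contain \emph{nonhorizontal} terms such as $\Om_2\dz\theta^1$, and that these must vanish \emph{in addition to} the torsions for a Weyl connection --- that is, for (\ref{fyu}) --- to exist. By replacing the hypothesis ``the forms satisfy (\ref{fyu})'' with its strictly weaker consequence ``$R_{ss}=T_{ss}=0$ plus the standing assumptions'', you have discarded exactly the conditions that produce the collapse; what you propose to verify is at least as strong as the lemma itself (indeed, combined with the theorem that follows it, it would say that every such linear-in-$s$ system is point equivalent to one of the two systems (\ref{si})), so deferring it to an unperformed computation is a gap, not a proof.

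The repair is structural rather than computational, and it is what the paper does. Keep the full hypothesis that (\ref{fyu}) holds. Since $(\theta^4,\Om_2,\Om_3)$ annihilate the integrable distribution $H^-$ of the para-CR structure, they generate a closed differential ideal, so $\der\Om_2$ and $\der\Om_3$ vanish modulo $(\theta^4,\Om_2,\Om_3)$; confronting this with the sixth and seventh equations of (\ref{fyu}) forces $\der\Om_2=\Om_2\dz(\Om_1+\Om_6)+\Gamma_1\dz\theta^4$ and $\der\Om_3=(\Om_1-\Om_6)\dz\Om_3+\Gamma_2\dz\theta^4$ with horizontal $\Gamma_1$, $\Gamma_2$, i.e.\ the curvatures of $\Om_2$ and $\Om_3$ can only have $\theta^i\dz\theta^4$ components. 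One then imposes $\der^2=0$ on \emph{all} the equations of (\ref{fyu}): your fourth paragraph applies this device only at the very end, to extract $\der\varkappa=\varkappa A$, but it is this same device, applied systematically to the $\der\theta^i$ equations, to the two constrained equations above, and to the remaining $\der\Om_\mu$ and $\der A$ equations, that forces the functions $\kappa_{aij}$ and $F_{ij}$ to reduce to the single function $\varkappa$ in the exact pattern (\ref{fyul}), with $\der A=0$. In other words, the collapse is a Bianchi-identity consequence of the hypothesized system (\ref{fyu}) itself; no computation of the curvature in terms of $R$ and $T$ is needed, and none carried out from your weakened hypotheses would suffice.
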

\begin{proof}
As we noticed in Theorem \ref{alb} the forms
(\ref{135})-(\ref{po})-(\ref{poi}) may satsify the first two of
equations (\ref{fyu}) if and only if $K_1\equiv K_2\equiv 0$, or what
is the same, if and only if $R_{ss}\equiv T_{ss}\equiv 0$. Moreover,
because the forms $(\theta^1,\Om_2,\Om_3,\theta^2,\theta^3,\theta^4)$
are in the class of forms $(\lambda,\mu_1,\mu_2,\nu_1,\nu_2,\nu_3)$
defining the $(1,2,3)$ para-CR structure, the forms
$(\theta^4,\Om_2,\Om_3)$ form a closed differential ideal
corresponding to the integrable distribution $H^-$. Thus, since 
$\der\Om_2\dz\Om_2\dz\Om_3\dz\theta^4\equiv 0$ and
$\der\Om_3\dz\Om_2\dz\Om_3\dz\theta^4\equiv 0$, the only possibility
of satisfaction of the sixth and seventh
equations in (\ref{fyu}) is that:
\begin{eqnarray}
&&\der\Omega_2=\Omega_2\dz(\Omega_1+\Omega_6)+\Gamma_1\dz\theta^4\label{oi}\\
&&\der\Omega_3=(\Omega_1-\Omega_6)\dz\Omega_3+\Gamma_2\dz\theta^4,\nonumber
\end{eqnarray}
with two 1-forms $\Gamma_1,\Gamma_2$ on $P$, which can be chosen such that 
$\Gamma_1=\gamma_{11}\theta^1+\gamma_{12}\theta^2+\gamma_{13}\theta^3$
and
$\Gamma_2=\gamma_{21}\theta^1+\gamma_{22}\theta^2+\gamma_{23}\theta^3$. 
Here $\gamma_{ij}$ are some functions on $P$. Now, one successively
imposes the condition that the differentials of the right hand sides of
the first four of equations (\ref{fyul}), the differentials of the
right hand sides of equations (\ref{oi}), and the differentials of the
right hand sides of the last five
of equations (\ref{fyu}) are zero (they must be, as they are
differentials of the coframe forms $(\theta^i,\Om_\mu)$). This
straightforwardly leads to the conclusion that it is possible 
if and only if (\ref{fyul}) is satisfied. This finishes the proof.
\end{proof}
\begin{theorem}
All finite type systems of PDEs on the
plane $$z_{xx}=R(x,y,z,z_x,z_y,z_{xy})\quad\&\quad z_{yy}=T(x,y,z,z_x,z_y,z_{xy}),$$ which in a natural way define a split
signature Weyl geometry $[g,A]$ on their 4-dimensional solution space,
are locally point equivalent to the system:
\begin{eqnarray}
&&z_{xx}=-\frac{2y z_x z_{xy}}{z+x z_x-y z_y}\quad\&\quad\label{si}\\
&&z_{yy}=-\frac{2\kappa}{y}\frac{z_{xy}}{z+x
  z_x-yz_y}-\frac{2x}{y}\frac{(z-yz_y)z_{xy}}{z+xz_x-yz_y},\nonumber\end{eqnarray}
with $\kappa$ being a real number. All such systems with $\kappa\neq
0$ are locally point equivalent to the system with $\kappa=1$. They are
point nonequivalent with the system with $\kappa\equiv 0$. 
For each $\kappa$ system (\ref{si}) has 
$$z=\frac{\kappa(a_0a_1+a_2a_3)y+\kappa
  a_1-y-a_0y^2-a_3xy}{a_2y-a_1x}$$
as its general solution. The Weyl geometry $[g_\kappa,A_\kappa]$ on the 4-dimensional
solution space, with points parametrized by $(a_0,a_1,a_2,a_3)$, is represented by
$$g_\kappa=\frac{2\big(\der a_0\der a_1+\der a_2\der
  a_3\big)}{\big(1+\kappa(a_0a_1+a_2a_3)\big)^2},\quad\quad\quad\quad
A_\kappa=0.$$
The type $(1,2,3)$ para-CR structures corresponding to the 
two different values $1$ or $0$ are locally nonequivalent. If
$\kappa=0$, then the corresponding $(1,2,3)$ type para-CR structure 
has an 11-dimensional group of symmetries $\cog(2,2)$, and is equivalent to
the $(1,2,3)$ para-CR structure corresponding to the system
$z_{xx}=z_{yy}=0$. If $\kappa\neq 0$, the corresponding type $(1,2,3)$
para-CR
structures have a 10-dimensional group of symmetries isomorphic to 
$\sog(2,3)$. This group acts naturally as the group of motions  
on the solution space, which is equipped with a metric of constant curvature 
$g_\kappa$.  
\end{theorem}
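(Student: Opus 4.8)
The plan is to treat this as a rigidity/classification problem governed entirely by the closed structure equations (\ref{fyul}) of Lemma \ref{le}. First I would record that, by Theorem \ref{alb} combined with Lemma \ref{le}, a finite-type pair $(R,T)$ with $1-R_sT_s>0$ induces a \emph{genuine} split-signature Weyl geometry on its solution space (a Weyl connection, not merely a conformal structure) precisely when the lifted forms can be gauged so that (\ref{fyul}) holds on the $11$-manifold $P$; in particular $K_1\equiv K_2\equiv 0$, i.e. $R_{ss}\equiv T_{ss}\equiv 0$, so that $R$ and $T$ are affine in $s=z_{xy}$. Thus the hypothesis ``naturally defines a Weyl geometry'' is equivalent to the realizability of (\ref{fyul}), whose only local curvature datum is the scalar $\varkappa$ subject to $\der A=0$ and $\der\varkappa=\varkappa A$.

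Next I would split according to $\varkappa$. Since $\der A=0$, locally $A=\der\phi$, and $\der\varkappa=\varkappa A$ integrates to $\varkappa=c\,{\rm e}^{\phi}$; a Weyl rescaling $A\mapsto A-2\der\phi$ then normalizes $A\equiv 0$ and makes $\varkappa$ constant. If $\varkappa\equiv 0$ the system (\ref{fyul}) collapses to (\ref{sy1p})--(\ref{sy11p}), so by Theorem \ref{ffm} the structure is the flat one of Theorem \ref{pfa}, namely the para-CR structure of $z_{xx}=z_{yy}=0$; this is the $\kappa=0$ member, and its full coframe satisfies the constant-coefficient Maurer--Cartan system of the $11$-dimensional flat model $P=\bbR^4\rtimes\cog(2,2)$, giving the $11$-dimensional symmetry group. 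If $\varkappa\neq 0$, then in the gauge $A\equiv 0$ the ten forms $(\theta^i,\Om_1,\dots,\Om_6)$ satisfy a \emph{closed constant-coefficient} system with $\varkappa$ a nonzero constant; hence they are Maurer--Cartan forms of a $10$-dimensional Lie group, which fibers over the $4$-dimensional $\mathcal S$ with $6$-dimensional stabilizer $\sog(2,2)$, and reading off the structure constants identifies it as $\sog(2,3)$. A constant rescaling of the $a_\alpha$ absorbs the value of $\varkappa$, so every $\varkappa\neq 0$ case is equivalent to one normalized model, yielding the $\kappa=1$ representative; the curvature scalar $\varkappa$ is a point invariant, which simultaneously proves the nonequivalence of the $\kappa=0$ and $\kappa\neq 0$ structures.

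The key rigidity step is Cartan's equivalence theorem for coframes: two of our structures whose coframes satisfy the \emph{same} closed constant-coefficient system (same normalized $\varkappa$) are locally equivalent. Consequently it suffices to exhibit one explicit representative in each class and verify it lies in the family (\ref{fyul}). For this I would take (\ref{si}), check directly that it satisfies $D_x^2T=D_y^2R$, $R_{ss}=T_{ss}=0$ and Newman's conditions $J_1=J_2=0$, and then run the normalization of Theorem \ref{alb} and Lemma \ref{le} to confirm that it realizes (\ref{fyul}) with $\varkappa$ proportional to $\kappa$. The stated general solution is verified by substitution, and descending $G=2(\theta^1\theta^2+\theta^3\theta^4)$ to the solution space as in the proof of Theorem \ref{pfa1} — using the explicit solution to pass to coordinates $(a_0,a_1,a_2,a_3)$ — yields the representative $(g_\kappa,A_\kappa)$ with $A_\kappa=0$, whose constant curvature for $\kappa\neq 0$ matches the $\sog(2,3)$ symmetry and whose flatness for $\kappa=0$ matches $\cog(2,2)$.

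The main obstacle I anticipate is the computational heart of this last step: integrating the abstract system (\ref{fyul}) back into a \emph{closed-form} pair $(R,T)$ and proving that (\ref{si}) is the universal normal form rather than one solution among many. Attacking this in the forward direction forces one through the same massive Mathematica reductions already used for Theorem \ref{alb}; the cleaner route, which I would adopt, is the reverse verification just described, so that the heavy PDE integration is replaced by a finite check plus the coframe-rigidity argument. Care will be needed to track the two branches (the $\pm$ in $w=\sqrt{1-R_sT_s}$ and the discrete swap $\theta^1\leftrightarrow\theta^2$) so that the normalization genuinely terminates in (\ref{fyul}) and not merely in the torsion-carrying form (\ref{bla}).
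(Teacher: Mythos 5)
Your proposal is correct and follows the paper's own proof essentially step for step: both arguments verify that (\ref{si}) realizes the closed system (\ref{fyul}) of Lemma \ref{le} with $\varkappa$ proportional to $\kappa$, split into the cases $\varkappa\equiv 0$ and $\varkappa\neq 0$, and invoke constant-coefficient Maurer--Cartan rigidity of the resulting coframe systems to conclude that the flat $\cog(2,2)$ model and the $\sog(2,3)$ model are the only two local equivalence classes. The one detail to make explicit is that rescaling the coframe by $(\epsilon\varkappa)^{1/2}$ only normalizes $\varkappa$ to $\pm 1$, and the paper then uses the discrete para-CR map $(\bar{\theta}^1,\bar{\theta}^3,\Om_2,\Om_5)\to(-\bar{\theta}^1,-\bar{\theta}^3,-\Om_2,-\Om_5)$ to identify the two signs --- exactly the branch care you flagged at the end.
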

\begin{proof}
We first show that the $(1,2,3)$ type para-CR structure associated
with the system (\ref{si}) defines forms $(\theta^i,\Om_\mu,A)$
satisfying (\ref{fyul}). Since we have the general solution of the
system (\ref{si}), it is convenient to use the representation
(\ref{134}), 
rather than (\ref{135}), for the defining forms
$(\lambda,\mu_1,\mu_2,\nu_1,\nu_2,\nu_3)$. Thus, inserting $$\psi=\frac{\kappa(a_0a_1+a_2a_3)y+\kappa
  a_1-y-a_0y^2-a_3xy}{a_2y-a_1x}$$ in (\ref{134}), we have
\begin{eqnarray}
\lambda=-\frac{(a_1\kappa-y)y}{a_1 x-a_2 y}\der
  a_0+\frac{(a_2\kappa-x)y(1+a_3 x+a_0y)}{(a_1 x-a_2 y)^2}\der a_1-
\nonumber\\\frac{(a_1\kappa-y)y(1+a_3 x+a_0y)}{(a_1 x-a_2 y)^2}\der a_2-\frac{(a_2\kappa-x)y}{a_1 x-a_2 y}\der a_3,\nonumber\\
\nu_1=\der a_1,\quad 
\nu_2=\der a_2,\quad 
\nu_3=\der a_3\nonumber\\
\mu_1=\der x,\quad
\mu_2=\der y.\nonumber
\end{eqnarray}   
We now take the forms (\ref{poi}) with these
$(\lambda,\mu_1,\mu_2,\nu_1,\nu_2,\nu_3)$ and apply the procedure of
fixing the gauge as in the proof of Theorems \ref{poj}, \ref{alb} and
Remark \ref{rbl}. This procedure leads to the following choices for the
free parameters $b_1,b_2,b_3,c_1,c_2$, $f_{13}$, $f_{22},f_{23},
f_{31},f_{33}, h_{11},h_{12},h_{21},h_{22}$:
\begin{eqnarray*}
&&b_1=\frac{f_{12}u}{f_{21}f_{32}(a_1\kappa-y)y(1+a_3 x+a_0y)}\\
&&b_2=-\frac{a f_{12}}{f_{32}}\\
&&b_3=\frac{u}{f_{21}(a_1\kappa-y)y(1+a_3 x+a_0y)}\\
&&c_1=-\frac{a\kappa(1+a_3x+a_0
    y)}{f_{21}(1+\kappa(a_0a_1+a_2a_3))(a_1\kappa-y)}\\
&&c_2=-\frac{a(a_1\kappa-y)}{f_{32}(1+\kappa(a_0a_1+a_2a_3))(a_1x-a_2y)}\\
&&f_{13}=\frac{(a_1x-a_2 y)u}{a(a_1\kappa-y)y(1+a_3 x+a_0 y)^2}\\
&&f_{22}=0\\
&&f_{23}=-\frac{(a_1 x-a_2 y)f_{21}}{1+a_3x+a_0y}\\
&&f_{31}=-\frac{(a_2\kappa-x)f_{32}}{a_1\kappa-y}\\
&&f_{33}=0\\
&&h_{11}=-\frac{ay(1+a_3x+a_0y)}{f_{21}(a_1x-a_2 y)^2}\\
&&h_{12}=\frac{a(a_2\kappa-x)y(1+a_3x+a_0y)}{f_{21}(a_1\kappa-y)(a_1x-a_2y)^2}\\
&&h_{21}=\frac{a(a_1\kappa-y)y(a_1+y(a_0a_1+a_2a_3))}{f_{32}(a_1x-a_2y)^3}\\
&&h_{22}=-\frac{a(a_1\kappa-y)y(a_2+x(a_0a_1+a_2a_3))}{f_{32}(a_1x-a_2y)^3}.
\end{eqnarray*}
Here 
\begin{eqnarray*}
u=a_1^2f_{21}f_{32}x^2-a_1y\Big(2a_2f_{21}f_{32}x+af_{11}\kappa(1+a_3x+a_0y)\Big)+\\
y\Big(a_2^2f_{21}f_{32}y+a(1+a_3x+a_0y)\big(f_{12}(x-a_2\kappa)+f_{11}y\big)\Big).
\end{eqnarray*}
It follows from the construction that these normalizations force the
forms $(\theta^1,\theta^2,\theta^3$, $\theta^4,\Om_2,\Om_3)$ to satisfy
the system (\ref{bla}) and the three conditions from remark \ref{rbl}. 
Because of the choice of $z=z(x,y,a_0,a_1,a_2,a_3)$ as the general 
solution to (\ref{si}), it turns out that in these normalizations the forms (\ref{poi}) 
satisfy, in addition (\ref{fyul}), with  
$$\varkappa=-\frac{2\kappa(1+a_3
  x+a_0y)}{f_{21}f_{32}(1+\kappa(a_0a_1+a_2 a_3))^2(a_1x-a_2 y)}.$$

If $\kappa\equiv 0$, we get $\varkappa\equiv 0$, and the system
(\ref{fyul}) becomes (\ref{sy1p})-(\ref{sy11p}). This proves that if
$\kappa\equiv 0$, then the system (\ref{si}) is point equivalent to
$z_{xx}=z_{yy}=0$, or what is the same, that the corresponding
$(1,2,3)$ type para-CR structure is locally equivalent to the flat one
described by Theorem \ref{pfa1}. 

If $\kappa\neq 0$ we normalize $\varkappa$ to $\varkappa=1$ by choosing 
$$f_{32}=-\frac{2\kappa(1+a_3
  x+a_0y)}{f_{21}(1+\kappa(a_0a_1+a_2 a_3))^2(a_1x-a_2 y)}.$$
This choice reduces $P$ to a 10-dimensional manifold $P_0$, with
coordinates $(x,y,z,p,q$, $s,a,f_{11},f_{12},f_{21},f_{22})$, on which
$A=0$ and the ten linearly independent 
1-forms $(\theta^i,\Om_\mu)$ satisfy the system
\begin{eqnarray}
&&\der\theta^1=\Omega_1\dz\theta^1-\Omega_3\dz\theta^3-\Omega_5\dz\theta^4\nonumber\\
&&\der\theta^2=-\Omega_1\dz\theta^2-\Omega_2\dz\theta^3-\Omega_4\dz\theta^4\nonumber\\
&&\der\theta^3=\Omega_4\dz\theta^1+\Omega_5\dz\theta^2+\Omega_6\dz\theta^3\nonumber\\
&&\der\theta^4=\Omega_2\dz\theta^1+\Omega_3\dz\theta^2-\Omega_6\dz\theta^4,\nonumber\\
&&\der\Omega_1=\Omega_2\dz\Omega_5-\Omega_3\dz\Omega_4-\theta^1\dz\theta^2\label{fyull}\\
&&\der\Omega_2=\Omega_2\dz(\Omega_1+\Omega_6)+\theta^2\dz\theta^4\nonumber\\
&&\der\Omega_3=(\Omega_1-\Omega_6)\dz\Omega_3+\theta^1\dz\theta^4\nonumber\\
&&\der\Omega_4=\Omega_4\dz(\Omega_1-\Omega_6)+\theta^2\dz\theta^3\nonumber\\
&&\der\Omega_5=(\Omega_1+\Omega_6)\dz\Omega_5+\theta^1\dz\theta^3\nonumber\\
&&\der\Omega_6=\Omega_2\dz\Omega_5+\Omega_3\dz\Omega_4-\theta^3\dz\theta^4.\nonumber
\end{eqnarray} 
Since in these relations only constant coefficients appear on the
right hand sides, $P_0$ is locally a Lie group, with the forms
$(\theta^i,\Om_\mu)$ as its left invariant forms. This group is
isomorphic to $\sog(2,3)$ and, it follows from the Cartan equivalence
method, that it is the \emph{full} symmetry group of the type
$(1,2,3)$ para-CR structure corresponding to (\ref{si}) with
$\kappa\neq 0$. Accordingly it is also the full group of local point
symmetries of the system (\ref{si}) with $\kappa\neq 0$. The
appearence of the group $\sog(2,3)$ is not accidental, since one can
check that the so normalized forms
$(\theta^1,\theta^2,\theta^3,\theta^4)$ satisfy
$$G=2(\theta^1\theta^2+\theta^3\theta^4)=\frac{4\kappa(\der a_0\der
  a_1+\der a_2\der a_3)}{(1+\kappa(a_0a_1+a_2a_3))^2}.$$
This means that the 4-dimensional solution space $\mathcal S$ of the system
(\ref{si}) with $\kappa\neq 0$ is naturally equipped with a
split-signature \emph{constant curvature} metric $G$. The symmetry
group of the pseudoriemannian structure $({\mathcal S},G)$ is
obvioulsy $\sog(2,3)$.   
 
Since the parameter $\kappa$ does not appear in the equations
(\ref{fyull}), we conclude that $\kappa\neq 0$ can always
be brought to $\kappa=1$ by a point transformation of (\ref{si}), or
what is the same, by a para-CR diffeomorphism of the corresponding
para-CR structure. This proves that among type $(1,2,,3)$ para-CR
structures associated with (\ref{si}) there are only two para-CR
nonequivalent ones: the one with $\kappa=0$, and those with $\kappa\neq
0$, which are all locally equivalent to the one with $\kappa=1$.

To prove that these two structures, modulo para-CR equivalence, are the only
ones that satisfy Lemma \ref{le}, we proceed as follows:

Suppose that we have a finite type system of PDEs
$z_{xx}=R(x,y,z,z_z,z_y,z_{xy})$ $\&$
$z_{yy}=T(x,y,z,z_z,z_y,z_{xy})$, which via the procedure described in
Theorems \ref{poj}, \ref{alb} and Remark \ref{rbl}, leads to the
differential system (\ref{fyul}), as in Lemma \ref{le}. If we have 
$\varkappa\equiv 0$, then our PDEs are point equivalent to
$z_{xx}=z_{yy}=0$. If $\varkappa\neq 0$ then the last equation
(\ref{fyul}) says that $A=\frac{\der\varkappa}{\varkappa}$. Then
putting $\epsilon={\rm sign}\varkappa$ we rescale the forms
$(\theta^1,\theta^2,\theta^3,\theta^4)$ to
$$(\bar{\theta}^1,\bar{\theta}^2,\bar{\theta}^3,\bar{\theta}^4)=\big(\epsilon\varkappa\big)^{\frac{1}{2}}(\theta^1,\theta^2,\theta^3,\theta^4).$$
Obviously this rescaling is a para-CR transformation. The advantage of
this rescaling is that, after it, the form $A$ disappears from the
first ten equations (\ref{fyul}). Explicitly, after the rescaling,  
the system (\ref{fyul}) becomes:
\begin{eqnarray}
&&\der\bar{\theta}^1=\Omega_1\dz\bar{\theta}^1-\Omega_3\dz\bar{\theta}^3-\Omega_5\dz\bar{\theta}^4\nonumber\\
&&\der\bar{\theta}^2=-\Omega_1\dz\bar{\theta}^2-\Omega_2\dz\bar{\theta}^3-\Omega_4\dz\bar{\theta}^4\nonumber\\
&&\der\bar{\theta}^3=\Omega_4\dz\bar{\theta}^1+\Omega_5\dz\bar{\theta}^2+\Omega_6\dz\bar{\theta}^3\nonumber\\
&&\der\bar{\theta}^4=\Omega_2\dz\bar{\theta}^1+\Omega_3\dz\bar{\theta}^2-\Omega_6\dz\bar{\theta}^4,\nonumber\\
&&\der\Omega_1=\Omega_2\dz\Omega_5-\Omega_3\dz\Omega_4-\epsilon\bar{\theta}^1\dz\bar{\theta}^2\nonumber\\
&&\der\Omega_2=\Omega_2\dz(\Omega_1+\Omega_6)+\epsilon\bar{\theta}^2\dz\bar{\theta}^4\label{fyulll}\\
&&\der\Omega_3=(\Omega_1-\Omega_6)\dz\Omega_3+\epsilon\bar{\theta}^1\dz\bar{\theta}^4\nonumber\\
&&\der\Omega_4=\Omega_4\dz(\Omega_1-\Omega_6)+\epsilon\bar{\theta}^2\dz\bar{\theta}^3\nonumber\\
&&\der\Omega_5=(\Omega_1+\Omega_6)\dz\Omega_5+\epsilon\bar{\theta}^1\dz\bar{\theta}^3\nonumber\\
&&\der\Omega_6=\Omega_2\dz\Omega_5+\Omega_3\dz\Omega_4-\epsilon\bar{\theta}^3\dz\bar{\theta}^4\nonumber\\
&&A=\frac{\der\varkappa}{\varkappa}.\nonumber
\end{eqnarray} 
This shows that if $\varkappa\neq 0$ we can always reduce the system
to 10 dimensions, and that there are \emph{at most} two different
para-CR structures with such $\varkappa$, corresponding to the
different signs of $\varepsilon$. However, a discrete para-CR transformation
on this system, transforming 
$$(\bar{\theta}^1,\bar{\theta}^3,\Om_2,\Om_5)\to
(-\bar{\theta}^1,-\bar{\theta}^3,-\Om_2,-\Om_5),$$
and being the identity on the rest of the coframe forms, brings the system
(\ref{fyulll}) into the form (\ref{fyull}), in which
$\epsilon=+1$. This shows that the para-CR structures with different values of
$\varepsilon$ are equivalent, and that there are only two, locally
nonequivalent type $(1,2,3)$ para-CR structures satisfying system
(\ref{fyul}). We found the representatives of both of them, as the
para-CR structures corresponding to $\kappa=0$ or $\kappa=1$ in
(\ref{si}). This finishes the proof.
\end{proof}
\section{Para-CR structures of type $(3,2,1)$}\label{buru}
\subsection{Type $(3,2,1)$ versus $(1,2,3)$}\label{321}
As noted in Section
\ref{n111}, the flip $(1,1,n-1)\to(n-1,1,1)$, changes 
a para-CR structure corresponding to an $n$th order ODE considered
modulo \emph{point} transformations, to a para-CR structure corresponding to an $n$th
order ODE considered modulo \emph{contact} transformations. In this 
section we further investigate the meaning of the flip 
$$(k,r,s)\to(s,r,k),$$
on an example of type $(k=1,r=2,s=3)$ 
para-CR structures corresponding to PDEs
(\ref{dhpndh}). We expect that the passage $(1,2,3)\to(3,2,1)$ will
again change the geometric setting in such a way that the type
$(1,2,3)$ para-CR structure corresponding to PDEs (\ref{dhpndh})
considered modulo \emph{point} transformations will become 
a para-CR structure corresponding to the same pair of PDEs but
considered modulo
\emph{contact} transformations.

That this is really the case can be seen from the following:

Given a pair of equations
$$z_{xx}=R(x,y,z,z_x,z_y,z_{xy})\quad\quad\&\quad\quad
z_{yy}=T(x,y,z,z_x,z_y,z_{xy})$$
we use the contact forms $\lambda=\der z-p\der x-q\der y$, 
$\nu_1=\der p-R\der x-s\der y$, $\nu_2=\der q-s\der x-T\der y$, 
$\nu_3=\der s-D_yR\der x-D_xT\der y$, $\mu_1=\der x$, 
$\mu_2=\der y$ on the 6-dimensional jet space $\mathcal J$ parametrized by
$(x,y,z,p,q,s)$ . It is easy to see that when the equations undergo a
\emph{point} transformation of variables, then the forms change according to:
\be
\bma
\la\\
\nu_1\\
\nu_2\\
\nu_3\\
\mu_1\\
\mu_2
\ema\to\bma
\la'\\
\nu_1'\\
\nu_2'\\
\nu_3'\\
\mu_1'\\
\mu_2'
\ema =\bma
a&0&0&0&0&0\\
b_{1}&f_{11}&f_{12}&0&0&0\\
b_{2}&f_{21}&f_{22}&0&0&0\\
b_{3}&f_{31}&f_{32}&f_{33}&0&0\\
c_{1}&0&0&0&h_{11}&h_{12}\\
c_{2}&0&0&0&h_{21}&h_{22}
\ema
\bma
\la\\
\nu_1\\
\nu_2\\
\nu_3\\
\mu_1\\
\mu_2
\ema,
\label{poii}\ee
and when the equations undergo a \emph{contact} transformation of
variables the forms change as:
\be
\bma
\la\\
\nu_1\\
\nu_2\\
\nu_3\\
\mu_1\\
\mu_2
\ema\to\bma
\la'\\
\nu_1'\\
\nu_2'\\
\nu_3'\\
\mu_1'\\
\mu_2'
\ema =\bma
a&0&0&0&0&0\\
b_{1}&f_{11}&f_{12}&0&0&0\\
b_{2}&f_{21}&f_{22}&0&0&0\\
b_{3}&f_{31}&f_{32}&f_{33}&0&0\\
c_{1}&u_{11}&u_{12}&0&h_{11}&h_{12}\\
c_{2}&u_{21}&u_{22}&0&h_{21}&h_{22}
\ema
\bma
\la\\
\nu_1\\
\nu_2\\
\nu_3\\
\mu_1\\
\mu_2
\ema.\label{ctn}
\ee
Introducing vector fields $(Z,X_1,X_2,Y_1,Y_2,Y_3)$, which are
respective duals to the coframe
$(\lambda,\mu_1,\mu_2,\nu_1,\nu_2,\nu_3)$, we easily see that under the
\emph{point} transformations they transform according to:
$$\bma
Z\\
Y_1\\
Y_2\\
Y_3\\
X_1\\
X_2
\ema\to\bma
Z'\\
Y_1'\\
Y_2'\\
Y_3'\\
X_1'\\
X_2'
\ema =\bma
*&*&*&*&*&*\\
0&*&*&*&0&0\\
0&*&*&*&0&0\\
0&0&0&*&0&0\\
0&0&0&0&*&*\\
0&0&0&0&*&*
\ema
\bma
Z\\
Y_1\\
Y_2\\
Y_3\\
X_1\\
X_2
\ema,$$
and under the \emph{contact} transformations they transform according
to:
$$\bma
Z\\
Y_1\\
Y_2\\
Y_3\\
X_1\\
X_2
\ema\to\bma
Z'\\
Y_1'\\
Y_2'\\
Y_3'\\
X_1'\\
X_2'
\ema =\bma
*&*&*&*&*&*\\
0&*&*&*&*&*\\
0&*&*&*&*&*\\
0&0&0&*&0&0\\
0&0&0&0&*&*\\
0&0&0&0&*&*
\ema
\bma
Z\\
Y_1\\
Y_2\\
Y_3\\
X_1\\
X_2
\ema,$$
where by $*$ we denoted the matrix entries that are nonzero. This
shows that the \emph{point} transformations preserve the two vector
spaces: 2-dimensional 
$H^+=\Span(X_1,X_2)$ and 3-dimensional $H^-_{\rm point}=\Span(Y_1,Y_2,Y_3)$, while the contact
transformations preserve $H^+$ and only a 1-dimensional $H^-_{\rm contact}=\Span(Y_3)$. We
have the following proposition:
\begin{proposition}
Assume that a pair of equations $$z_{xx}=R(x,y,z,z_x,z_y,z_{xy})\quad\quad\&\quad\quad
z_{yy}=T(x,y,z,z_x,z_y,z_{xy})$$ satisfies the compatibility
conditions $D^2_xT=D^2_yR$, where 
$D_x=\partial_x+p\partial_z+R\partial_p+s\partial_q+D_yR\partial_s$,
$D_y=\partial_y+q\partial_z+T\partial_q+s\partial_p+D_xT\partial_s$ 
and $p=z_x,q=z_y,s=z_{xy}$. Let $H^+=\Span(D_x,D_y)$, $H^-_{\rm
  point}=\Span(Y_1,Y_2,Y_3)$, and $H^-_{\rm contact}=\Span(Y_3)$, with 
$Y_1=\partial_p, Y_2=\partial_q, Y_3=\partial_s$, be three
distributions, with respective dimensions 2, 3, 1, on the
6-dimensional jet space $\mathcal J$ parametrized by
$(x,y,z,p,q,s)$. Then:

If this pair of equations is considered modulo \emph{point} transformation of
variables, it defines a type 
$(1,2,3)$ para-CR structure $({\mathcal J},H^+,H^-_{\rm
  point})$ on $\mathcal J$.

If this pair of equations is considered
modulo \emph{contact} transformations, it defines a type $(3,2,1)$ para-CR
structure $({\mathcal J},H^+,H^-_{\rm
  contact})$ on $\mathcal J$.
\end{proposition}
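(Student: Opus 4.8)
The plan is to check, for each of the two distribution pairs, the two defining conditions of Definition \ref{pcr}---that $H^+$ and $H^-$ are each integrable and that $H^+\oplus H^-$ is a direct sum---and then to read off the para-CR type from a dimension count on the six-manifold $\mathcal J$. First I would record that the vector fields dual to the coframe (\ref{135}) are $X_1=D_x$, $X_2=D_y$, $Y_1=\partial_p$, $Y_2=\partial_q$, $Y_3=\partial_s$; this is a one-line duality check against the explicit forms, and it identifies $H^+=\Span(D_x,D_y)$, $H^-_{\rm point}=\Span(\partial_p,\partial_q,\partial_s)$ and $H^-_{\rm contact}=\Span(\partial_s)$ as in the statement. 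The integrability of $H^-_{\rm point}$ is then immediate, since $\partial_p,\partial_q,\partial_s$ are commuting coordinate fields, and that of $H^-_{\rm contact}$ is automatic for a one-dimensional distribution.

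All the real content sits in the integrability of $H^+$, and here the hard part is a single bracket computation. Using the explicit expressions (\ref{pndha}) for $D_x$ and $D_y$, I would compute $[D_x,D_y]$ componentwise in the coordinates $(x,y,z,p,q,s)$. The $\partial_x$, $\partial_y$, $\partial_z$, $\partial_p$ and $\partial_q$ components all cancel identically (for instance the $\partial_p$ component is $D_x(s)-D_y(R)=D_yR-D_yR=0$, and similarly for the others), while the $\partial_s$ component is exactly $D_x(D_xT)-D_y(D_yR)=D_x^2T-D_y^2R$. Hence $[D_x,D_y]=(D_x^2T-D_y^2R)\,\partial_s$. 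The compatibility hypothesis $D_x^2T=D_y^2R$ of (\ref{pnd}) then makes this bracket vanish, so $H^+$ is involutive. I expect this step to be the crux of the proof: it is the unique place where the hypothesis enters, and it explains why nothing weaker suffices, since $\partial_s=Y_3$ lies outside $H^+$, so $[D_x,D_y]$ can land in $H^+$ only when its $\partial_s$-component is zero.

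Transversality is then easy: $D_x=\partial_x+\dots$ and $D_y=\partial_y+\dots$ carry the only $\partial_x$ and $\partial_y$ terms, which are absent from $H^-_{\rm point}\supset H^-_{\rm contact}$, so $H^+\cap H^-_{\rm point}=\{0\}$ in both cases and $H=H^+\oplus H^-$ with the operator $K$ defined by $K|_{H^+}=+\id$, $K|_{H^-}=-\id$ is an abstract para-CR structure. Counting dimensions gives the types: in the point case $\dim H^+=2$ and $\dim H^-_{\rm point}=3$, so the codimension is $k=6-5=1$ and the type is $(1,2,3)$; in the contact case $\dim H^+=2$ and $\dim H^-_{\rm contact}=1$, so $k=6-3=3$ and the type is $(3,2,1)$. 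Finally, to match these two structures with the two transformation pseudogroups I would invoke the dual transformation matrices displayed just before the proposition (the duals of (\ref{poii}) and (\ref{ctn})): the point matrix preserves both $\Span(X_1,X_2)=H^+$ and $\Span(Y_1,Y_2,Y_3)=H^-_{\rm point}$, whereas the contact matrix preserves $H^+$ but only the line $\Span(Y_3)=H^-_{\rm contact}$. Thus the para-CR structure attached to the equations is $({\mathcal J},H^+,H^-_{\rm point})$ of type $(1,2,3)$ modulo point transformations and $({\mathcal J},H^+,H^-_{\rm contact})$ of type $(3,2,1)$ modulo contact transformations, as claimed.
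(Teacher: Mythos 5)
Your proof is correct and follows essentially the same route as the paper's: identify the duals of the coframe (\ref{135}) as $(\partial_z, D_x, D_y, \partial_p, \partial_q, \partial_s)$, observe that $H^-_{\rm point}$ is spanned by commuting coordinate fields, and reduce everything to the single bracket $[D_x,D_y]=(D_x^2T-D_y^2R)\,\partial_s$, which vanishes by the compatibility hypothesis. Your explicit componentwise cancellation and the dimension count are just slightly more detailed versions of what the paper leaves to the reader and to the discussion preceding the proposition.
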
   
\begin{proof}
In view of the discussion preceeding the Proposition, the only thing to
be proven is that the distributions $H^+$ and $H^-_{\rm point}$ are
integrable on $\mathcal J$. Using the local coordinates
$(x,y,z,p,q,s)$ we see that the duals to a coframe
$(\lambda,\mu_1,\mu_2,\nu_1,\nu_2,\nu_3)$ are
$(Z=\partial_z,X_1=D_x,X_2=D_y,Y_1=\partial_p,Y_2=\partial_q,Y_3=\partial_s)$. Hence, obviously, $H^-_{\rm point}$
is integrable. Calculating the commutator $[D_x,D_y]$ we get
$[D_x,D_y]=(D^2_xT-D^2_yR)\partial_s$, which vanishes due to our
assumptions. Thus also $H^+$ is integrable. 
\end{proof}
\subsection{Towards invariants for type $(3,2,1)$}\label{tow}
The contact transformations (\ref{ctn}) are \emph{more restrictive}
than the most general para-CR transformations 
\be
\bma
l_1\\
l_2\\
l_3\\
n\\
m_1\\
m_2
\ema\to\bma
l_1'\\
l_2'\\
l_3'\\
n'\\
m_1'\\
m_2'
\ema =\bma
a&a_{11}&a_{12}&0&0&0\\
b_{1}&f_{11}&f_{12}&0&0&0\\
b_{2}&f_{21}&f_{22}&0&0&0\\
b_{3}&f_{31}&f_{32}&f_{33}&0&0\\
c_{1}&u_{11}&u_{12}&0&h_{11}&h_{12}\\
c_{2}&u_{21}&u_{22}&0&h_{21}&h_{22}
\ema
\bma
l_1\\
l_2\\
l_3\\
n\\
m_1\\
m_2
\ema=\bma
\theta^4\\
\theta^1\\
\theta^2\\
\theta^3\\
\Om_3\\
\Om_2
\ema,\label{pio}
\ee
of a $(3,2,1)$-para-CR structure $[l_1,l_2,l_3,m_1,m_2,n]$ defined on
$\mathcal J$ by $l_1=\lambda=\der z-p\der x-q\der y$, 
$l_2=\nu_1=\der p-R\der x-s\der y$, $l_3=\nu_2=\der q-s\der x-T\der y$, 
$n=\nu_3=\der s-D_yR\der x-D_xT\der y$, $m_1=\mu_1=\der x$, 
$m_2=\mu_2=\der y$. However, when looking for the local
invariants for such structures, we can easily normalize the unwanted
$a_{11}$ and $a_{12}$ parameters in these transformations to $a_{11}=0$ and $a_{22}=0$ by the
requirement that the invariant forms $(\theta^i)$ satisfy a
consequence of (\ref{uiu1}), i.e.:
\be\der\theta^4\dz\theta^1\dz\theta^2\dz\theta^4=0.\label{zeo}\ee
It is easy to see that (\ref{zeo}) necessarily implies $a_{11}=0$ and
$a_{22}=0$. Since condition (\ref{uiu1}) is needed to have a conformal
metric on the solution space, from now on we will assume (\ref{zeo}), and as a
consequence
$$a_{11}=a_{12}=0.$$
In such a case the para-CR transformations (\ref{pio}) become the contact
transformations\footnote{Note that
  the situation here is similar to the situation in the \emph{point}
  invariant case. There the para-CR transformations (\ref{poi}) of a
  $(1,2,3)$-type para-CR structure associated with the system of PDEs
  (\ref{dhpndh}) differed from the point transformations (\ref{poii}),
  by the appearence of the nonzero parameters $f_{13}$ and $f_{23}$ in
  (\ref{poi}. But one of the consequences of equations
  (\ref{uiu1})-(\ref{uiu5}) was that $f_{13}=f_{23}=0$, (see
  (\ref{iwi})), which proved that the para-CR transformations
  (\ref{poi}) and the point transformations (\ref{poii}) were equivalent.} for the associated system of PDEs
$z_{xx}=R(x,y,z,z_x,z_y,z_{xy})$, $z_{yy}=T(x,y,z,z_x,z_y,z_{xy})$. As
in the previous sections we assume in addition that
$D_x^2T=D_y^2R$, but release the $1-R_sT_s>0$ condition to
$1-R_sT_s\neq 0$. We have the following theorem.
\begin{theorem}\label{tnf}
Given a pair of PDEs on the plane $z_{xx}=R(x,y,z,z_x,z_y,z_{xy})$
$\&$ $z_{yy}=T(x,y,z,z_x,z_y,z_{xy})$ satisfying $D_x^2T=D_y^2R$ and
$1-R_sT_s\neq 0$, the
condition 
$$J_1\equiv 0,\quad\quad\&\quad\quad J_2\equiv 0,$$
where 
\begin{eqnarray*}
&&J_1=(R_sT_s-4)D_xR_s+R_s(2D_yR_s-R_sD_xT_s)+\\
&&8R_q-6R_qR_sT_s+4R_pR_s+2R_s^2T_q-2R_pR_s^2T_s+2R_s^3T_p\\
&&\\
&&J_2=(R_sT_s-4)D_yT_s+T_s(2D_xT_s-T_sD_yR_s)+\\
&&8T_p-6R_sT_pT_s+4T_qT_s+2R_pT_s^2-2R_sT_qT_s^2+2R_qT_s^3,
\end{eqnarray*}
is preserved under the \emph{contact} transformations of the
variables. If this condition is satisfied the 4-dimensional solution space of the
PDEs is naturally equipped with a conformal class $[g]$ of metrics. If $$1-R_sT_s>0$$ these conformal metrics have \emph{split}
signature. If $$1-R_sT_s<0$$ the metrics have \emph{Lorentzian}
signature. The conformal class $[g]$ is invariant under the contact
transformations of the variables of the PDEs. 
\end{theorem}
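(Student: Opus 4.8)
The plan is to deduce the theorem from the two constructions already in place, Theorems \ref{poj} and \ref{alb}, by transporting them into the contact-invariant setting of Section \ref{tow} and then tracking the single place where the sign of $1-R_sT_s$ enters.

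First I would invoke the reduction established just before the statement: imposing (\ref{zeo}), a consequence of (\ref{uiu1}), forces $a_{11}=a_{12}=0$ and collapses the general $(3,2,1)$ para-CR freedom (\ref{pio}) onto the contact transformations (\ref{ctn}) of the system $z_{xx}=R$, $z_{yy}=T$. So the normalization of $(\theta^1,\theta^2,\theta^3,\theta^4,\Om_2,\Om_3)$ under (\ref{zeo}) is literally the Cartan procedure already run in the proof of Theorem \ref{poj}, except that the contact matrix (\ref{ctn}) carries extra parameters $u_{ij}$ in the $\Om_2,\Om_3$ rows. These parameters only rescale the wedge products in (\ref{uiu1})--(\ref{uiu5}) by nonzero group factors (the relevant determinant is the one displayed in that proof), so the obstruction to satisfying (\ref{uiu1})--(\ref{uiu5}) is still, by (\ref{iwi3}), the vanishing of the very same functions $J_1$ and $J_2$ -- which depend only on $R,T$ and their derivatives. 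Since $J_1\equiv J_2\equiv 0$ is thereby identified with the contact-intrinsic property that the structure admits the conformal-metric normalization, it is preserved under contact transformations; and the same identification makes the resulting conformal class $[g]$ contact-invariant.

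The one genuinely new point is the signature dichotomy, which is exactly where relaxing $1-R_sT_s>0$ to $1-R_sT_s\neq 0$ costs something. The normalization (\ref{iwi2}) amounts to factoring the binary quadratic form $R_sf_{11}^2+2f_{11}f_{12}+T_sf_{12}^2$, whose discriminant is $4(1-R_sT_s)$. When $1-R_sT_s>0$ the quantity $w=\sqrt{1-R_sT_s}$ is real, the factorization is over $\bbR$, the forms $\theta^1,\theta^2,\theta^3,\theta^4$ are real and independent, and Theorem \ref{alb} delivers $G=2(\theta^1\theta^2+\theta^3\theta^4)$ of split signature $(+,+,-,-)$. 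When $1-R_sT_s<0$ the form is definite, so its roots are complex conjugate; choosing the conjugate branches of (\ref{iwi2}) makes $\theta^2=\overline{\theta^1}$ a complex-conjugate pair while $\theta^3,\theta^4$ remain real (note $w^2=1-R_sT_s$ is real, so $f_{33}$ in (\ref{iwi4}) and the rest of the real part of the construction are unaffected). Writing $\theta^1=P+iQ$ with $P,Q$ real then gives $G=2(P^2+Q^2)+2\theta^3\theta^4$, a real metric of Lorentzian signature $(+,+,+,-)$.

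The main obstacle is to check that this complex-conjugate normalization is consistent all the way through Theorem \ref{alb}: one must verify that the subsequent choices of $b_1,b_2,b_3$ (via (\ref{iwi5})--(\ref{iwi6})), the auxiliary forms $\Om_1,\Om_4,\Om_5,\Om_6,A$, and the torsion coefficients $t^1_{23},t^2_{13}$ of (\ref{bla1}) all respect the reality/conjugacy pattern -- in particular $b_2=\overline{b_1}$ -- so that the descended metric $G$ and the Weyl data are genuinely real. Since every quantity that appears is a rational expression in $R,T$, their derivatives, and $w^2$ (real throughout), this is a matter of organizing the computation into real and imaginary parts rather than of any new idea; once it is done, the conformal-invariance argument of Theorem \ref{alb} applies verbatim and the theorem follows.
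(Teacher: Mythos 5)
Your first half tracks the paper's own route (reduce to contact transformations via (\ref{zeo}), rerun the normalizations of Theorems \ref{poj} and \ref{alb}, observe via (\ref{iwi3}) that the obstruction is still $J_1\equiv J_2\equiv 0$), but there is a genuine gap where you pass from ``the normalization is contact-intrinsic'' to ``the resulting conformal class $[g]$ is contact-invariant.'' That inference is not automatic: the contact group (\ref{ctn}) contains the extra parameters $u_{11},u_{12},u_{21},u_{22}$, which are \emph{not} present in the point-transformation setting of Theorem \ref{alb}, and a priori two admissible normalized coframes differing by these parameters (and by $a,b_3,f_{31},f_{32},c_1,c_2,f_{12},f_{22}$) could yield bilinear forms $G=2(\theta^1\theta^2+\theta^3\theta^4)$ that are not conformally related, in which case $[g]$ would not even be well defined. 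The paper closes exactly this hole by a direct computation: after imposing (\ref{iwi1})--(\ref{iwi2}), the constraint (\ref{ivi}) on the $u_{ij}$, and the normalizations of $f_{33},b_1,b_2,b_3$, the expression for $G$ turns out to have \emph{no} dependence on $u_{11},u_{12},u_{21},u_{22}$, to be independent of $a,b_3,f_{31},f_{32},c_1,c_2$, and to depend on $f_{12},f_{22}$ only through the overall factor $f_{12}f_{22}$; this is what identifies the residual freedom with ${\bf CO}(G)$ and makes $[g]$ both well defined on the solution space and contact-invariant. Your proposal asserts this conclusion without the verification that carries it.

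For the signature dichotomy you take a genuinely different route from the paper, and it too rests on an unverified step. You propose to run the whole Cartan procedure with conjugation-symmetric choices when $1-R_sT_s<0$ (so $\theta^2=\overline{\theta^1}$, $b_2=\overline{b_1}$, $\theta^3,\theta^4$ real), obtaining $G=2(P^2+Q^2)+2\theta^3\theta^4$, manifestly real and Lorentzian; but the consistency of this reality pattern through (\ref{iwi4})--(\ref{iwi6}), the auxiliary forms, and the torsion normalizations is precisely the content needed, and you leave it as ``organizing the computation'' -- in a proof whose real analogue already required massive machine computation. The paper avoids this entirely by a different mechanism: it lets the normalization become complex without tracking it, and instead observes that once $J_1\equiv J_2\equiv 0$ is used to eliminate $D_xR_s,D_yT_s$ (resp. $D_xT_s,D_yR_s$), the resulting formula for $G$ -- (\ref{mne1}), resp. (\ref{mne2}) -- contains no $\sqrt{1-R_sT_s}$ at all, hence is real for either sign of $1-R_sT_s$; the signature is then read off elementarily, since $2\lambda\om$ contributes $(+,-)$ and the binary form $T_s\nu_1^2-2\nu_1\nu_2+R_s\nu_2^2$ has discriminant $4(1-R_sT_s)$, giving $(+,-)$ when $1-R_sT_s>0$ and a definite pair when $1-R_sT_s<0$. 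The paper's route has the further advantage that the explicit $w$-free representatives are needed anyway (Proposition \ref{pnf}), and that the descent to the solution space can be checked concretely by the Lie-derivative formulae ${\mathcal L}_{D_x}g=\al(D_x)\,g$, ${\mathcal L}_{D_y}g=\al(D_y)\,g$ along the integrable distribution $H^+=\Span(D_x,D_y)$, rather than by appeal to Theorem \ref{alb} ``verbatim'' for a complexified coframe.
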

We also have a useful Proposition, which gives local representatives of
the conformal class $[g]$ from the above Theorem:
\begin{proposition}\label{pnf}
If 
$R_sT_s\neq 4$ a representative $g$ of the conformal class
$[g]$ can be chosen so that it is given by
\be
g=2 \la \om~+~2(R_s T_s-4)( T_s\nu_1^2-2 \nu_1\nu_2+R_s\nu_2^2),\label{mne1}\ee
where
\begin{eqnarray*}
&&\om=(4D_xT_s-2T_sD_yR_s +4 R_pT_s-2R_s^2T_pT_s-2R_sT_qT_s+4R_qT_s^2)\nu_1+\\
&&\quad(4D_yR_s-2R_sD_xT_s+4R_sT_q-2R_qR_sT_s^2-2R_pR_sT_s+4R_s^2T_p)\nu_2+\\
&&\quad2(4-R_sT_s)(R_sT_s-1)\nu_3+ v \la,\\
&&\lambda=\der z-p\der x-q\der y, \\
&&\nu_1=\der p-R\der x-s\der
y,\quad \nu_2=\der q-s\der x-T\der y,\quad  \nu_3=\der s-D_yR\der x-D_xT\der y,\end{eqnarray*}
and 
\begin{eqnarray*}
&&2v=8 D_xT_q-4 D_y^2R_s+4 (D_xT_s) D_yR_s+4 R_sD_xT_p -4 R_sD_yT_q -4
 R_s^2D_yT_p+\\&&
8 R_q T_p-14 R_s T_pD_yR_s +4 R_p R_s T_p+3 R_s^2 T_pD_xT_s -6 R_s^3
T_p^2-4 T_qD_yR_s +\\&&
4 R_s T_qD_xT_s -6 R_s^2 T_p T_q+8 T_sD_yR_q -2 (D_yR_s)^2 T_s+4 R_p T_sD_yR_s
-2 R_s T_sD_xT_q +\\&&
 R_s T_sD_y^2R_s+4R_s T_s D_yR_p -R_s^2 T_sD_xT_p +R_s^2 T_sD_yT_q
 +R_s^3 T_sD_yT_p +8 R_z T_s+\\&&2 R_q R_s T_p T_s+2 R_p R_s^2 T_p
 T_s+8 R_q T_q T_s-3R_s T_q T_s D_yR_s +4 R_p R_s T_q T_s-\\&&2 R_s^3
 T_p T_q T_s-2 R_s^2 T_q^2 T_s+4 R_q T_s^2D_yR_s -2 R_s T_s^2D_yR_q -
 R_s^2 T_s^2D_yR_p-2 R_s R_z T_s^2+\\&&2 R_q R_s^2 T_p T_s^2+2 R_q R_s
 T_q T_s^2+8 R_s T_z-2 R_s^2 T_s T_z.
\end{eqnarray*}
If $R_sT_s\neq 0$ another representative $g$ of $[g]$ may be chosen so that:
\be
g=2 \la \om'~+~ T_s\nu_1^2-2 \nu_1\nu_2+R_s\nu_2^2,\label{mne2}\ee
where 
\begin{eqnarray*}
&&\om'=\frac{-D_yT_s+2T_p - R_sT_pT_s+T_qT_s}{T_s}\nu_1
+\frac{-D_xR_s+2R_q - R_qR_sT_s+R_pR_s}{R_s}\nu_2+\\
&&\quad\quad(1-R_sT_s)\nu_3- \frac{v'}{2R_s^3T_s} \la,\end{eqnarray*}
with $\la$, $\nu_1$, $\nu_2$ and $\nu_3$ as before,  
and 
\begin{eqnarray*}
&&v'=
2 R_s^2(D_xR_s)D_yT_s -4 R_q R_s^2D_yT_s -R_p R_s^3D_yT_s -4 R_s^2 T_pD_xR_s
+\\&&8 R_q R_s^2 T_p+2 R_p R_s^3 
T_p+2 (D_xR_s)^2 T_s-8  R_q T_sD_xR_s+8 R_q^2 T_s-\\&&2 R_p R_s T_sD_xR_s +4 R_p R_q R_s T_s-R_s^2 T_sD_xD_yR_s +2 R_s^2 T_sD_yR_q +R_s^3 T_sD_xT_q +\\&&R_s^3 T_sD_yR_p -R_q R_s^3 T_p T_s-3 R_s^2 T_q T_sD_xR_s +6 R_q R_s^2 T_q T_s+R_p R_s^3 T_q T_s+\\&&2 R_q R_s T_s^2D_xR_s -4 R_q^2 R_s T_s^2+R_s^3 R_z T_s^2+R_s^4 T_s T_z.
\end{eqnarray*}  
\end{proposition}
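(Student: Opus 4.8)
The plan is to read both representatives off the conformal metric $G=2(\theta^1\theta^2+\theta^3\theta^4)$ which, by Theorem~\ref{alb}, descends to the class $[g]$ on the $4$-dimensional solution space $\mathcal S$. Here $\mathcal S$ is the leaf space of $H^+=\Span(D_x,D_y)$, whose annihilator is spanned by $\lambda,\nu_1,\nu_2,\nu_3$, so a quadratic form in these four $1$-forms descends to $\mathcal S$ precisely when it is $H^+$-invariant. (In the type $(3,2,1)$ situation the Cartan normalization coincides with the one used in Theorems~\ref{poj} and~\ref{alb}: imposing~(\ref{zeo}) forces $a_{11}=a_{12}=0$, so the contact transformations reduce to exactly those forms, and $J_1\equiv J_2\equiv0$ is again the condition producing $[g]$.) First I would substitute the partially normalized coframe~(\ref{lop}), with $f_{33},b_1,b_2,b_3$ fixed as in~(\ref{iwi4})--(\ref{iwi6}) and in the two loops of the proof of Theorem~\ref{alb}, into $G$. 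Since the rows of~(\ref{lop}) defining $\theta^1,\theta^2,\theta^3,\theta^4$ have zero entries in the $\mu_1,\mu_2$ columns, each $\theta^i$ is a combination of $\lambda,\nu_1,\nu_2,\nu_3$ alone, so $G$ is automatically a quadratic form in these.

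The decisive simplification is the $\nu$-quadratic part. Because $\theta^4=a\lambda$, the product $\theta^3\theta^4$ contributes only terms carrying a factor $\lambda$, so the entire $\nu_i\nu_j$ part comes from $\theta^1\theta^2$; using the $\nu_1$-coefficients $\tfrac{-1\mp w}{R_s}f_{12}$ and $\tfrac{-1\pm w}{R_s}f_{22}$ from~(\ref{iwi2}) and the identity $(-1\mp w)(-1\pm w)=1-w^2=R_sT_s$, with $w=\sqrt{1-R_sT_s}$, it collapses to $\tfrac{f_{12}f_{22}}{R_s}\big(T_s\nu_1^2-2\nu_1\nu_2+R_s\nu_2^2\big)$. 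Hence $G=\tfrac{2f_{12}f_{22}}{R_s}\big(T_s\nu_1^2-2\nu_1\nu_2+R_s\nu_2^2\big)+2\lambda\,\Theta$, where $\Theta$ is a $1$-form in $\lambda,\nu_1,\nu_2,\nu_3$ whose coefficients are the normalized group functions $b_i,f_{3j}$ expressed through $R,T$ and their derivatives.

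Since $[g]$ is a conformal class, any nonzero rescaling of $G$ is again a representative, and the two displayed metrics are two gauge choices of the still-free factor $f_{12}f_{22}$. Taking $f_{12}f_{22}=R_s(R_sT_s-4)$, admissible exactly when $R_sT_s\neq4$, turns the $\nu$-quadratic coefficient into $2(R_sT_s-4)$ and gives~(\ref{mne1}) with $\om=\Theta$; taking $f_{12}f_{22}=\tfrac12R_s$ turns it into $1$ and gives~(\ref{mne2}), the factors $R_s^{-1},T_s^{-1}$ in $\om'$ explaining the extra hypothesis $R_sT_s\neq0$. A quick consistency check is the $\nu_3$-coefficient: from~(\ref{iwi4}) the $\theta^3\theta^4$ term supplies $a f_{33}=2w^2f_{12}f_{22}/R_s$, which for the two choices equals $2(4-R_sT_s)(R_sT_s-1)$ and $1-R_sT_s$ respectively, matching $\om$ and $\om'$ exactly. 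It then remains to simplify the $\lambda$- and $\nu_1,\nu_2$-coefficients of $\Theta$, using $D_x^2T=D_y^2R$ and $J_1\equiv J_2\equiv0$, into the quoted $v$ and $v'$.

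The main obstacle is precisely this last reduction: expressing $\Theta$ through the fully normalized $b_3$ (itself fixed by the torsion condition $H\equiv0$ in the proof of Theorem~\ref{alb}) and the remaining normalized data, and collapsing it to the explicit $v,v'$, is a very large symbolic computation, best carried out with Mathematica as elsewhere in the paper. One structural remark both guides and validates it: $G$ is symmetric under $\theta^1\leftrightarrow\theta^2$, i.e.\ under the sign flip $\pm\mapsto\mp$ of~(\ref{iwi2}), equivalently $w\mapsto-w$; hence every coefficient of $\Theta$ is even in $w$ and therefore rational in $w^2=1-R_sT_s$, with no surd $w$ surviving. This is exactly why $\om,v,\om',v'$ are written through $R_s,T_s$ alone, and checking that the even-in-$w$ combinations reduce to the displayed expressions completes the proof. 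As an independent route, the same $\om,v$ can be pinned down by demanding that $g$ descend to $\mathcal S$, i.e.\ that $D_x,D_y$ act as conformal symmetries of $2\lambda\om+2(R_sT_s-4)(T_s\nu_1^2-2\nu_1\nu_2+R_s\nu_2^2)$, which turns the determination of $\Theta$ into a linear system.
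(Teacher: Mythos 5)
Your proposal is correct and follows essentially the same route as the paper's proof: both impose the Cartan normalizations of Theorems~\ref{poj} and~\ref{alb} (valid for the contact/$(3,2,1)$ class once (\ref{zeo}) kills $a_{11},a_{12}$), compute $G=2(\theta^1\theta^2+\theta^3\theta^4)$ in the normalized coframe, exploit the fact that $f_{12},f_{22}$ enter only through the overall factor $f_{12}f_{22}$, and reduce the remaining coefficients modulo $J_1\equiv J_2\equiv 0$ by a large machine computation. The only nuance is that the paper produces the two representatives (and the hypotheses $R_sT_s\neq 4$, $R_sT_s\neq 0$) from the two ways of solving $J_1\equiv J_2\equiv 0$ --- eliminating $D_xR_s,D_yT_s$ (dividing by $4-R_sT_s$) versus eliminating $D_xT_s,D_yR_s$ (dividing by $R_sT_s$) --- whereas you frame them as gauge choices of $f_{12}f_{22}$; since you also invoke the constraint reduction to reach the quoted $v$ and $v'$, the two framings are compatible, and your explicit collapse of the $\nu$-quadratic part together with the $\nu_3$-coefficient check via (\ref{iwi4}) is a sound (and welcome) verification the paper leaves implicit.
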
 
\begin{proof} (of the Proposition and the Theorem).
We start by forcing the \emph{contact} invariant
forms $(\theta^1,\theta^2,\theta^3,\theta^4,\Om_2,\Om_3)$ given in 
(\ref{pio}) to satisfy the \emph{first four} equations
(\ref{fyu}). We do it in several steps. The first step consists in
the requirement that $(\theta^1,\theta^2,\theta^3,\theta^4,\Om_2,\Om_3)$
satisfy consequences of equations (\ref{fyu}), namely equations
(\ref{uiu1})-(\ref{uiu5}). The first of these conditions implies
$\der\theta^4\dz\theta^1\dz\theta^2\dz\theta^4=0$, and this, as noted
before, implies  $a_{11}=a_{12}=0$. 

Let us now, unless otherwise stated, assume that $1-R_sT_s>0$. 
Then the conditions
(\ref{uiu1})-(\ref{uiu5}) can be easily fulfilled by taking
$u_{11}=u_{12}=u_{21}=u_{22}=0$ in (\ref{pio}), since this enables us to
identify forms (\ref{pio}) with (\ref{poi}). After this identification
the imposition of the rest of conditions (\ref{uiu1})-(\ref{uiu5}) may
be obtained by making the same 
normalizations of parameters $h_{11}$, $h_{21}$, $h_{12}$, $h_{21}$,
$f_{21}$ and $f_{11}$ as in the proof of Theorem \ref{poj}. It follows
however, that one can achieve (\ref{uiu1})-(\ref{uiu5}) \emph{without}
the restriction $u_{11}=u_{12}=u_{21}=u_{22}=0$ on the parameters
$u_{11},u_{12},u_{21}$ and $u_{22}$. We checked that the most general
normalizations to achieve (\ref{uiu1})-(\ref{uiu3}) is to take $h_{11}$, $h_{21}$, $h_{12}$, $h_{21}$,
$f_{21}$ and $f_{11}$ as in (\ref{iwi1})-(\ref{iwi2}) and to restrict
$u_{11},u_{12},u_{21}$ and $u_{22}$ by only \emph{one} constraint  
\be
u_{22} f_{11}-u_{21} f_{12}+u_{12} f_{21}-u_{11} f_{22}=0\label{ivi}.\ee
If this is not zero, equation (\ref{uiu1}) has an unwanted term
proportional to $\theta^1\dz\theta^2\dz\theta^4$  on the right hand side. Even without the
restriction (\ref{ivi}), but assuming (\ref{iwi1})-(\ref{iwi2}), we
get that $\der\theta^1\dz\theta^1\dz\theta^3\dz\theta^4\dz\Om_3$ and
$\der\theta^2\dz\theta^2\dz\theta^3\dz\theta^4\dz\Om_2$ are still
given by (\ref{iwi3}). This proves that the conditions $J_1\equiv
J_2\equiv 0$ are neccessary for a conformal metric $g$ to be defined
on the solution space. It also proves that these conditions are
\emph{contact invariant}. This surely holds when our assumption
$1-R_sTs>0$ is satisfied. (That this assumption is only a technical
one will be clear soon). So from now on we assume the normalizations  
(\ref{iwi1})-(\ref{iwi2}), (\ref{ivi}) and that the invariants $J_1$ and $J_2$ are
both zero, $J_1\equiv J_2\equiv 0$. 

Now it follows that the conditions 
$\der\theta^1\dz\theta^1\dz\theta^2\dz\theta^4=-\Om_3\dz\theta^1\dz\theta^2\dz\theta^3\dz\theta^4$, 
$\der\theta^2\dz\theta^1\dz\theta^2\dz\theta^4=-\Om_2\dz\theta^1\dz\theta^2\dz\theta^3\dz\theta^4$,
$\der\theta^2\dz\theta^2\dz\theta^3\dz\theta^4=0$ and $\der\theta^1\dz\theta^1\dz\theta^3\dz\theta^4=0$ 
are equivalent to precisely \emph{the same} normalizations
(\ref{iwi4}), (\ref{iwi5}) and (\ref{iwi6}) of
$f_{33}$, $b_2$ and $b_1$ as in the proof of  Theorem
\ref{alb}. Further repetition, step by step, of the
absorbtion/normalization procedure described in the proof of
Theorem \ref{alb} leads to the last relevant normalization, which
determines the coefficient $b_3$. Here, again this coeffcient turns out
to be precisely as in the proof of Theorem \ref{alb}. That
the present expressions for the determined parameters $h_{11}$, $h_{21}$, $h_{12}$, $h_{21}$,
$f_{21}$, $f_{11}$, $f_{33}$, $b_1$, $b_2$ and $b_3$ do \emph{not depend} on the parameters $u_{11},
u_{12}, u_{21}$ and $u_{22}$ is remarkable. They are invisible because
they turn out to parametrize only that part of the contact
transformations, which is related to the orthogonal group preserving the metric $g$ we are going to
construct. 

Indeed, assuming $J_1\equiv J_2\equiv 0$ and the above discussed
normalizations 
for $h_{11}$, $h_{21}$, $h_{12}$, $h_{21}$,
$f_{21}$, $f_{11}$, $f_{33}$, $b_1$, $b_2$, $b_3$, we calculate
$G=2(\theta^1\theta^2+\theta^3\theta^4)$. A direct calculation shows
then, that the resulting expression
for $G$ has \emph{no} $u_{11},
u_{12}, u_{21}, u_{22}$ dependence! Moreover, the so obtained $G$ is
also \emph{independent} of still undetermined parameters $a$, $b_3$,
$f_{31}$, $f_{32}$, $c_1$ and $c_2$. Its dependence on the
parameters $f_{12}$
and $f_{22}$ is only \emph{conformal}. By this we mean that the parameters $f_{12}$ and
$f_{22}$ only appear as a common
factor $f_{12}f_{22}$ in front of the entire expression for $G$. This means that\emph{all} the
remaining free parameters $a$, $b_3$,
$f_{31}$, $f_{32}$, $c_1$, $c_2$, $u_{11}, u_{12}, u_{21}$ and
$u_{22}$ are \emph{group parameters} of the 
dilation group ${\bf CO}(G)$ preserving conformally the bilinear form
$G$. 

If one wants the explicit expressions for $G$, with the above
normalizations for  $h_{11}$, $h_{21}$, $h_{12}$, $h_{21}$,
$f_{21}$, $f_{11}$, $f_{33}$, $b_1$, $b_2$ and $b_3$, in terms of the
functions $R$ and $T$ defining the system $z_{xx}=R$ $\&$ $z_{yy}=T$,
one has to decide how to mod the resulting formula by the constraints
$J_1\equiv J_2\equiv 0$.

It follows that if we write
$J_1\equiv J_2\equiv 0$ in the form $D_xR_s=\dots$ and $D_yT_s=\dots$,
and eliminate these derivatives from $G$, 
then we obtain $G$, which up to a factor, coincides with $g$ from
formula (\ref{mne1}). Similarly, if we write these
condtions as $D_xT_s=\dots$ and $D_yR_s=\dots$, we get the result that
$G$ differs from formula (\ref{mne2}) only by a factor. This proves 
that the bilinear forms $g$ as in (\ref{mne1}) and
(\ref{mne2}) are conformally invariant on ${\mathcal J}$, and that they change
conformally when the system $z_{xx}=R$ $\&$ $z_{yy}=T$ undergoes
\emph{contact} transformation of the variables.

The last thing is to prove that $[g]$ is actually defined on the
solution space of the PDEs, and that it is nondegenerate there with signature
depending on the sign of $1-R_sT_s$.

Let us start to comment on these last issues with a remark about
the technicality of our assumption $1-R_sT_s>0$. 
We needed the assumption $1-R_sT_s>0$ starting with the
normalization (\ref{iwi2}). It was needed there to maintain the invariant
forms $\theta^i$ to be \emph{real}. But this was only made for
simplicity, since we did not want to deal with the complex numbers in
the proof. Moreover, from the point of view of the conformal metric we
wanted to construct, this was a good simplification since in the
resulting formulae (\ref{mne1}), (\ref{mne2}) for $g$ the square root 
$\sqrt{1-R_sT_s}$ does not appear at all! Concluding this issue, we
say that if we were in the situation when $1-R_sT_s<0$, our
normalizing procedure for the forms $\theta^i$ would make them
complex, but the resulting $G$ would nevertheless be real and given by
(\ref{mne1}) or (\ref{mne2}). Thus all the conformal properties of $g$
established so far are also valid in the $1-R_sT_s<0$ case.   

There is one more technical issue here. The reason for having two
different expressions for $g$, as in (\ref{mne1}) and (\ref{mne2}), is
to have local expressions valid everywhere off the set $1-R_s T_s=0$. 
Since solving for $D_xR_s$ and $D_yT_s$ in
$J_1\equiv J_2\equiv 0$ we devide by $(4-R_sT_s)$, the metric 
(\ref{mne1}) is only defined if $R_sT_s\neq 4$; similarly, because of
the division by $R_sT_s$, the metric (\ref{mne2}) is defined only if $R_sT_s\neq
0$. Off the set $R_sT_s=0=4-R_sT_s$ the conformal metrics (\ref{mne1})
and (\ref{mne2}) 
coincide, since they are local manifestations of the same formula
$G=2(\theta^1\theta^2+\theta^3\theta^4)$ on $\mathcal J$.

Finally we comment on how $G$ descends to the solution space of the
PDEs. 

We start with an observation that the bilinear form (\ref{mne1}) 
satisfies $g(D_x,\cdot)=g(D_y,\cdot)\equiv 0$, i.e. it is \emph{degenerate} along the vector
fields $D_x$ and $D_y$ on $\mathcal J$. The first product $2\la\om$ in
(\ref{mne1}) has obviously signature $(+,-)$. Thus to determine the signature
of (\ref{mne1}) we need to determine 
the signature of the product $$2(R_s T_s-4)( T_s\nu_1^2-2
\nu_1\nu_2+R_s\nu_2^2).$$ Since the quadratic form $T_s\nu_1^2-2
\nu_1\nu_2+R_s\nu_2^2$ has $\Delta=4(1-R_sT_s)$ as its discriminant,
then the signature of the product $2(R_s T_s-4)( T_s\nu_1^2-2
\nu_1\nu_2+R_s\nu_2^2)$ is: $\pm(+,-)$ iff $1-R_sT_s>0$ and $\pm(+,+)$ iff
$1-R_sT_s<0$. Thus, assuming that $R_sT_s\neq 4$, we conclude that, modulo
the degenerate directions $D_x$ and $D_y$ along which $g$ is
vanishing, the bilinear form (\ref{mne1}) 
has either \emph{split} (iff $1-R_sT_s>0$), or \emph{Lorentzian}
signature (iff $1-R_sT_s<0$) on $\mathcal J$. 

A straightforward, but lengthy (!), calculation shows that the Lie
derivatives of $g$, from formula (\ref{mne1}), with respect to the degenerate directions $D_x$ and
$D_y$ are:
$${\mathcal L}_{D_x}g=\al(D_x)g,\quad\&\quad {\mathcal
  L}_{D_y}g=\al(D_y)g,$$
where 
\begin{eqnarray*}
&&\al(D_x)=(4-R_sT_s)^{-2}\times\\
&&\Big(8 D_yR_s+16 R_p-8 R_s D_xT_s+8 R_s^2 T_p+8 R_s T_q-24 R_q T_s-4
  R_s T_sD_yR_s -\\&&
16 R_p R_s T_s+3 R_s^2 T_sD_xT_s -4 R_s^3 T_p T_s-4 R_s^2 T_q T_s+10 R_q R_s T_s^2+4 R_p R_s^2 T_s^2\Big)\end{eqnarray*}
and 
\begin{eqnarray*}
&&\al(D_y)=(4-R_sT_s)^{-2}\times\\
&&\Big(8 D_xT_s+16 T_q-8  T_sD_yR_s+8 R_q T_s^2+8 R_p T_s-24 R_s T_p-4 R_s T_sD_xT_s -\\&&16 R_s T_q T_s+3  R_s T_s^2D_yR_s-4 R_q R_s T_s^3-4 R_p R_s T_s^2+10 R_s^2 T_p T_s+4 R_s^2 T_q T_s^2\Big)\end{eqnarray*}
Recalling the fact that the distribution $H^+=\Span(D_x,D_y)$ is
integrable on $\mathcal J$, we see that the bilinear form $g$ descends
to a \emph{conformal metric} $g$ on the \emph{4-dimensional leaf space} ${\mathcal
  J}/H^+$, and that the descended metric has \emph{split} signature
iff $1-R_sT_s>0$ and \emph{Lorentzian} signature iff $1-R_sT_s<0$ and
$R_sT_s\neq 4$. Obviously the leaf space ${\mathcal
  J}/H^+$ may be identified with the 4-dimensional solution space of
the PDEs.

Analogous considerations can be performed for the metric (\ref{mne2})
if $R_sT_s\neq 0$. This is also degenerate along $D_x$ and $D_y$ in
$\mathcal J$. It also, apart from the degenerate directions $D_x$ and
$D_y$, has signature Lorentzian/split. For this metric we have 
$${\mathcal L}_{D_x}g=\frac{D_xR_s-2R_q}{R_s}g,\quad\&\quad {\mathcal
  L}_{D_y}g=\frac{D_yT_s-2T_p}{T_s}g,$$
so again (\ref{mne2}) descends to a conformal metric of \emph{split} (iff
$1-R_sT_s>0$ and $R_sT_s\neq 0$) or \emph{Lorentzian} signature (if
$1-R_sT_s<0$) on  ${\mathcal J}/H^+$. If $R_sT_s\neq 0$ and
$R_sT_s\neq 4$, these two conformal classes 
coincide on ${\mathcal J}/H^+$ as we explained before.

This finishes the proofs of the Theorem and the Proposition.
\end{proof}

\begin{remark}
In the proof we did not show that, contrary to the $(1,2,3)$ para-CR
forms (\ref{poi}) which satisfy (\ref{bla}),  we can force the
$(3,2,1)$ para-CR forms (\ref{pio}) to satisfy their \emph{torsionless}
counterpart, i.e. the first four of equations (\ref{fyu}). But this is
very easy: one first makes the normalizations
$u_{11}=u_{12}=u_{21}=u_{22}=0$ and all the other ones from Theorems
\ref{poj} and \ref{alb}, and after achieving (\ref{bla}) uses a
transformation, which is an identity on the obtained
$(\theta^1,\theta^2,\theta^3,\theta^4)$ and changes the obtained $\Omega_2$ and
$\Om_3$ according to:
\be
\Om_3\to\Om_3'= \Om_3-t^1_{~23}\theta^2,\quad\quad \Om_2\to\Om_2'=\Om_2-t^2_{~13}\theta^1,
\label{avt}\ee    
where $t^1_{~23}$ and $t^2_{~13}$ are torsions given
by (\ref{bla1}). Since the obtained $\theta^1$ and $\theta^2$ are 
linear combinations of $l_1$, $l_2$ and $l_3$ \emph{only} (because
$f_{13}=f_{23}=0$ is the chosen normalization (\ref{iwi1})!), then
transformation (\ref{avt}) is an allowed $(3,2,1)$-para-CR
transformation\footnote{Note however that this is \emph{not} a type
  $(1,2,3)$ para-CR transformation, and that if only such
  transformations are considered one can not absorb the torsion terms in
(\ref{bla}).} for the type $(3,2,1)$ para-CR forms
$(\theta^1,\theta^2,\theta^3,\theta^4,\Om_2,\Om_3)$. But this
transformation \emph{absorbs} the torsion terms in (\ref{bla}) and
makes the forms $(\theta^1,\theta^2,\theta^3,\theta^4,\Om_2',\Om_3')$
to satisfy the torsionless part of equations (\ref{fyu}). This means
that the type $(3,2,1)$ para-CR structures originating from the system
$z_{xx}=R$ $\&$ $z_{yy}=T$ with $D_x^2T=D_y^2R$, $J_1\equiv J_2\equiv
0$, $R_sT_s\neq 1$, contrary to the corresponding $(1,2,3)$ para-CR
structures, define quite a general 
conformal geometry on the solution space, and that their \emph{invariants}
can be described in terms of the \emph{curvature of the 
Cartan normal conformal connection} associated with this conformal
geometry. 
This observation, and an equivalent statement of
Theorem \ref{tnf} and Proposition \ref{pnf}, in a slightly different 
language, was first made by E.T. Newman and his collaborators
\cite{et}. According to Newman \cite{et}, using all the type $(3,2,1)$ para-CR
structures coming from the system $z_{xx}=R$ $\&$ $z_{yy}=T$
satisfying $J_1\equiv J_2\equiv 0$, one can obtain \emph{all} the
conformal classes of the Lorentzian 4-metrics. This statement is not
clear to us, and requires further justification. For example,
similarly to the attempts in \cite{perk}, we were
unable to calculate the Weyl tensor of the metrics (\ref{mne1}) and
(\ref{mne2}). This was mainly because of the huge length of the 
intermediate expressions encountered during the calculations of the
the Cartan normal conformal connection. 
Thus we were unable to see if it is general enough 
to cover all the conformal Lorentzian/split signature
4-metrics. Finding the conformally Einstein or Bach conditions for
these metrics in terms of the defining functions $R$ and $T$ would be
very interesting, and would complete the Newman programme.
\end{remark}

Although, we were unable to calculate the \emph{full} Weyl tensor of the
metric (\ref{mne2}), we succeded in calculating \emph{two} of its
components. These components \emph{must} vanish if we want the metric
(\ref{mne1}) to be conformally flat. Thus vanishing of these
components is a conformal property, and in turn, is a \emph{contact invariant}
property of the equations $z_{xx}=R$ $\&$ $z_{yy}=T$ satisfying
$D_x^2T=D_y^2R$ $\&$ $J_1\equiv J_2\equiv 0$. It is also a
\emph{para-CR invariant} property of the corresponding type $(3,2,1)$
para-CR structure. Defining the forms $(\om_1,\om_2,\om_3,\om_4)$ by
$(\om_1,\om_2,\om_3,\om_4)=(\nu_1,\nu_2,\la,\om')$, so that the metric
(\ref{mne2}) can be written as:
$$g=2\om_2\om_4+T_s\om_1^2-2\om_1\om_2+R_s\om_2^2,$$
we calculated the components $C^1_{~424}$ and $C^2_{~414}$ of the Weyl
tensor of this metric to be:
$$C^1_{~424}=\frac{2R_{sss}(1-R_sT_s)+3R_{ss}(R_sT_s)_s}{4(1-R_sT_s)^4},\quad\quad
C^2_{~414}=\frac{2T_{sss}(1-R_sT_s)+3T_{ss}(R_sT_s)_s}{4(1-R_sT_s)^4}.$$
This proves the following theorem.
\begin{theorem}
For the system of PDEs $z_{xx}=R(x,y,z,z_x,z_y,z_{xy})$
$\&$ $z_{yy}=T(x,y,z,z_x,z_y,z_{xy})$ satisfying $D_x^2T=D_y^2R$ and
the metricity conditions 
$$J_1\equiv 0,\quad\quad\&\quad\quad J_2\equiv 0,$$
each of the conditions 
$$K_1=2R_{sss}(1-R_sT_s)+3R_{ss}(R_sT_s)_s=0,\quad\quad
K_2=2T_{sss}(1-R_sT_s)+3T_{ss}(R_sT_s)_s=0,$$
is invariant with respect to \emph{contact} transformations 
of the variables.
\end{theorem}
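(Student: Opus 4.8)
The plan is to recognize $K_1$ and $K_2$, up to the nonvanishing factor $4(1-R_sT_s)^4$, as two components of the Weyl conformal curvature tensor of the metric $g$ constructed in Proposition \ref{pnf}, and then to deduce their contact-invariance from the conformal invariance of the Weyl tensor together with the contact-invariance of the conformal class $[g]$ established earlier.

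First I would invoke Theorem \ref{tnf} and Proposition \ref{pnf}: under $D_x^2T=D_y^2R$ and the metricity conditions $J_1\equiv J_2\equiv 0$, the $4$-dimensional solution space carries a conformal class $[g]$ with representative $g$ given by (\ref{mne2}), and this class is a contact invariant --- a contact transformation of the variables $(x,y,z,p,q,s)$ replaces $g$ by a conformally equivalent metric. Since the Weyl tensor $C^i_{~jkl}$ of any (pseudo-)Riemannian metric is invariant, as a $(1,3)$-tensor, under a conformal rescaling $g\to {\rm e}^{2\phi}g$, the Weyl tensor of $g$ is in fact an invariant of the conformal class $[g]$, hence a contact invariant of the underlying PDE system. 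Consequently any of its components, computed in a fixed coframe, transforms invariantly under contact transformations.

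Next comes the computational heart of the argument. Working in the coframe $(\om_1,\om_2,\om_3,\om_4)=(\nu_1,\nu_2,\lambda,\om')$, in which $g=2\om_2\om_4+T_s\om_1^2-2\om_1\om_2+R_s\om_2^2$, I would compute the Levi-Civita connection of the representative $g$, then the curvature, and finally subtract the Ricci and scalar parts to isolate the Weyl tensor. Extracting the two components $C^1_{~424}$ and $C^2_{~414}$, and using $J_1\equiv J_2\equiv 0$ to eliminate the derivatives $D_xR_s,D_yT_s$ (equivalently $D_xT_s,D_yR_s$) consistently, the claim --- which is the actual output of the calculation --- is that
$$C^1_{~424}=\frac{2R_{sss}(1-R_sT_s)+3R_{ss}(R_sT_s)_s}{4(1-R_sT_s)^4},\qquad C^2_{~414}=\frac{2T_{sss}(1-R_sT_s)+3T_{ss}(R_sT_s)_s}{4(1-R_sT_s)^4}.$$
Because $1-R_sT_s\neq 0$ by hypothesis, each numerator $K_1$, $K_2$ vanishes precisely when the corresponding Weyl component does; since those components are conformal invariants of $[g]$ and $[g]$ is contact-invariant, the conditions $K_1=0$ and $K_2=0$ are preserved under contact transformations, which is the assertion.

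The main obstacle is exactly this explicit Weyl computation: the metric (\ref{mne2}) is so complicated that, as the preceding remark records, its full Weyl tensor is out of reach even by machine. The argument survives only because these two particular components collapse to the compact forms above after repeated use of the metricity relations $J_1\equiv J_2\equiv 0$. The hard part is therefore the careful, computer-assisted algebra verifying this collapse --- keeping track of the eliminated derivatives and confirming that the factor $4(1-R_sT_s)^4$ is extracted cleanly --- rather than any conceptual step, the conceptual content being entirely carried by conformal invariance of $C$ and contact-invariance of $[g]$.
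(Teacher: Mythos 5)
Your proposal is correct and follows essentially the same route as the paper: the paper likewise identifies $K_1$ and $K_2$ as the numerators of the Weyl components $C^1_{~424}$ and $C^2_{~414}$ of the metric (\ref{mne2}) computed (by machine) in the coframe $(\om_1,\om_2,\om_3,\om_4)=(\nu_1,\nu_2,\la,\om')$, and then deduces contact invariance from the conformal invariance of the Weyl tensor combined with the contact invariance of the class $[g]$ established in Theorem \ref{tnf}. The only difference is presentational — the paper states the computed components and the invariance argument in a single paragraph, while you spell out the elimination of $D_xR_s,D_yT_s$ via $J_1\equiv J_2\equiv 0$ and the role of the nonvanishing factor $4(1-R_sT_s)^4$ — so there is nothing substantive to separate the two.
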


The new invariants $K_1$ and $K_2$ from the above Theorem justify the title of this section: although we were
unable to define the invariants of the type $(3,2,1)$ para-CR
structures in full generality, we discussed a class of such structures
whose invariants are just the conformal invariants of certain
4-metrics. In the next section 
we provide an example of the system $z_{xx}=R$ $\&$ $z_{yy}=T$
satisfying $J_1\equiv J_2\equiv 0$, whose corresponding conformal
4-metrics are quite interesting.  
\subsection{An example of (3,2,1) para-CR structures with nontrivial
  conformally Einstein metrics} 
Given a pair of PDEs $z_{xx}=R$ $\&$ $z_{yy}=T$ it is not easy to find
the most general solution of the \emph{integrability conditions} 
$D_x^2T=D_y^2R$ and the \emph{metricity conditions} $J_1\equiv
J_2\equiv 0$. But particular examples of functions $R$ and $T$
satisfying both sets of conditions can be given. The simplest of them,
but as we will see, still nontrivial, is given in the following
proposition.
\begin{proposition}
Let the functions $R=R(x,y,z,p,q,s)$ and $T=T(x,y,z,p,q,s)$ be functions
of variable $s$ alone, 
$$R=r(s)\quad\quad\&\quad\quad T=t(s),$$
and assume that their derivatives $r'$ and $t'$ satisfy $$1-r't'\neq
0.$$
Then such $R$ and $T$ satisfy simultaneously equations $D_x^2T=D_y^2R$
and $J_1\equiv
J_2\equiv 0$.
\end{proposition}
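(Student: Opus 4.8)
The plan is to verify the two claims directly, exploiting heavily that $R=r(s)$ and $T=t(s)$ are functions of $s$ alone. The first observation to record is that every first-order partial derivative of $R$ and $T$ vanishes except $R_s=r'(s)$ and $T_s=t'(s)$; in particular $R_x=R_y=R_z=R_p=R_q=0$, and likewise for $T$. These vanishings will annihilate most of the terms appearing in $J_1$ and $J_2$ once the transport operators $D_x,D_y$ are understood.

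The key step, and the only point that uses the hypothesis $1-r't'\neq 0$, is to resolve the implicit definitions of $D_xT$ and $D_yR$. Since $T$ depends on $s$ only, the defining relation $D_xT=T_x+pT_z+RT_p+sT_q+(D_yR)T_s$ collapses to $D_xT=(D_yR)t'$; symmetrically $D_yR=(D_xT)r'$. Substituting one into the other gives $D_xT=(D_xT)\,r't'$, that is $(1-r't')D_xT=0$, so the assumption $1-r't'\neq 0$ forces $D_xT\equiv 0$, and then $D_yR\equiv 0$ as well. I expect this self-referential resolution to be the main (indeed essentially the only) subtle point of the argument.

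With $D_xT\equiv D_yR\equiv 0$ the transport operators lose their $\partial_s$ components, reducing to $D_x=\partial_x+p\partial_z+r\,\partial_p+s\,\partial_q$ and $D_y=\partial_y+q\partial_z+s\,\partial_p+t\,\partial_q$; neither differentiates with respect to $s$. Because $R_s=r'(s)$ and $T_s=t'(s)$ depend on $s$ alone, I then obtain $D_xR_s=D_yR_s=D_xT_s=D_yT_s=0$ immediately. Feeding these vanishings into $J_1$ and $J_2$ kills every derivative term, and each of the remaining purely algebraic terms carries a factor among $R_p,R_q,T_p,T_q$, all of which are zero; hence $J_1\equiv J_2\equiv 0$.

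Finally the integrability condition is immediate from $D_xT\equiv 0$ and $D_yR\equiv 0$: one simply computes $D_x^2T=D_x(D_xT)=0$ and $D_y^2R=D_y(D_yR)=0$, so that $D_x^2T=D_y^2R$ holds trivially. Thus both the compatibility equation and Newman's metricity conditions are satisfied, completing the proof.
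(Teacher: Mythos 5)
Your proof is correct and follows essentially the same route as the paper's: both arguments resolve the implicit definitions of $D_xT$ and $D_yR$ into a linear system whose only solution under $1-r't'\neq 0$ is $D_xT=D_yR=0$ (the paper records all four relations $D_xR=r'D_yR$, $D_yR=r'D_xT$, $D_xT=t'D_yR$, $D_yT=t'D_xT$ and solves them simultaneously), then observe that $D_x$ and $D_y$ consequently annihilate every function of $s$ alone, so that all derivative terms in $J_1,J_2$ vanish while the algebraic terms die on factors of $R_p,R_q,T_p,T_q$, and $D_x^2T=0=D_y^2R$ trivially. No gaps; the self-referential resolution you flag as the key step is exactly the paper's key step.
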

\begin{proof}
Applying the operators $D_x$ and $D_y$ from definitions
(\ref{pndha}) on functions $R=r(s)$ and $T=t(s)$, we obtain
$$D_xR=r'D_yR,\quad D_yR=r'D_xT,\quad D_xT=t'D_yR\quad \&\quad
D_yT=t'D_xT.$$
These are \emph{linear} equations for functions $D_xR$, $D_yR$, $D_xT$
and $D_yT$. hence, by an elementary argument they have a unique
solution 
$$D_xR=0,\quad D_yR=0,\quad D_xT=0,\quad D_yT=0,$$
when $1-r't'\neq 0$. Thus, with our assumptions, the operators $D_x$
and $D_y$, when acting on differentiable functions $f=f(s)$ of only
variable $s$, are identically vanishing. This, in particular, means
that $D_x^2T=0=D_y^2R$. Looking at the definitions of $J_1$ and $J_2$, in
which each term involves at least one derivative of $R$ or $T$ with
respect to $p$, $q$ and $D_x$ or $D_y$, we see that $J_1$ and $J_2$
are identically zero as well.\end{proof} 
Now, having a solution $R=r(s)$, $T=t(s)$ to the integrability and the metricity
conditions, we apply the theory from Section \ref{tow}, and 
calculate the conformal metric on the solution space of the system
$$z_{xx}=r(z_{xy})\quad\quad\&\quad\quad z_{yy}=t(z_{xy}).$$ 
Modulo a conformal factor the explicit formula for the metric $g$ as in
(\ref{mne1}) reads:
\be
\begin{aligned}
&g_0=~2~(1-r't')~\big(\der z-p\der x-q\der y\big)
  \der s~+~t'~(\der p-r\der x-s\der y)^2~ -\\&\quad \quad \,\,\,2 ~(\der p-r\der x-s\der y)(\der
  q-s\der x-t\der y)~+~
r'~(\der q-s\der x-t\der y)^2,\end{aligned}\label{bn}\ee
where $x,y,z,p,q,s$ are coordinates on $\mathcal J$, $r=r(s)$, and 
$t=t(s)$, $r'=\der r/\der s$, $t'=\der t/\der s$. We know from the
previous section that although this bilinear form is manifestly 
defined on $\mathcal J$, it transforms conformally when Lie dragged
along $D_x=\partial_x+p\partial_z+r\partial_p+s\partial_q$ and 
$D_y=\partial_y+q\partial_z+s\partial_p+t\partial_q$, and descends to
a conformal metric on the 4-dimensional solution space ${\mathcal J}/H^+$. It has
split signature iff $1-r't'>0$ and Lorentzian signature iff
$1-r't'<0$.  

The conformal invariants of this metric are para-CR invariants of the
$(3,2,1)$ para-CR structure 
 $[l_1,l_2,l_3,m_1,m_2,n]$ with $l_1=\der z-p\der x-q\der y$, 
$l_2=\der p-r\der x-s\der y$, $l_3=\der q-s\der x-t\der y$, 
$n=\der s$, $m_1=\der x$, $m_2=\der y$. These conformal invariants are
given in terms of the Cartan normal conformal connection for the class 
$[g_0]$. It is described by the following theorem
\begin{theorem}\label{blu1} 
Consider a metric $g={\rm e}^{2h}g_0$, where $h=h(s)$ is an arbitrary
smooth function and $g_0$ is as in (\ref{bn}). Let $(\om_1,\om_2,\om_3,\om_4,\om_5,\om_6)$ be a
coframe on $\mathcal J$ defined by
$\om_1=\der q-s\der x-t\der y$, $\om_2=\der s$, $\om_3=\der z-p\der
x-q\der y$, $\om_4=\der p-r\der x-s\der y$, $\om_5=\der x$,
$\om_6=\der y$, 
so that the metric is \be
g={\rm
  e}^{2h}\Big(~2(1-r't')\om_2\om_3+r'\om_1^2-2\om_1\om_4+t'\om_4^2~\Big).\label{nmb}\ee
Then the curvature of the Cartan normal conformal connection for $g$, when written on
$\mathcal J$, reads:
\be
\begin{aligned}
&{\mathcal R}=\bma 0&0&0&0&0&0\\
0&0&Z_2&0&0&0\\
0&0&0&0&0&0\\
0&\frac{-Z_2r'-Z_1t'}{2(1-r't')}&0&0&\frac{2Z_2+Z_1t'^2-Z_2r't'}{2(1-r't')}&0\\
0&0&\tfrac12(Z_2r'-Z_1t')&0&0&0\\
0&0&0&0&0&0
\ema\om_2\dz\om_4+\\
&\quad\quad\quad\bma 0&0&0&0&0&0\\
0&0&\tfrac12(Z_2r'-Z_1t')&0&0&0\\
0&0&0&0&0&0\\
0&\frac{Z_1r't'-2Z_1-Z_2r'^2}{2(1-r't')}&0&0&\frac{Z_2r'+Z_1t'}{2(1-r't')}&0\\
0&0&-Z_1&0&0&0\\
0&0&0&0&0&0
\ema\om_1\dz\om_2,
\end{aligned}
\ee
where 
$$Z_1=\frac{2(r't'-1)r^{(3)}-3r''(t'r')'}{4(1-r't')^2},\quad Z_2=\frac{2(r't'-1)t^{(3)}-3t''(t'r')'}{4(1-r't')^2}.$$
In particular the metric $g$ is conformally flat iff $Z_1\equiv
Z_2\equiv 0$, i.e. iff the functions $r$ and $t$ satisfy the system of
third order ODEs:
$$r^{(3)}=\frac{-3r''(r't')'}{2(1-r't')}\quad\quad \&\quad\quad t^{(3)}=\frac{-3t''(r't')'}{2(1-r't')}.$$ 
In general the metric $g$ is of (conformal) Petrov type $N\oplus N'$ in
the split signature case, and of Petrov type $N\oplus\bar{N}$ in the Lorentzian case.
\end{theorem}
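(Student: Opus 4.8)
The plan is to reduce the statement to a computation in ordinary $4$-dimensional conformal geometry and then to read off the three assertions — the curvature formula, the conformal-flatness criterion, and the Petrov type — from the result. By Theorem \ref{tnf} the degenerate form $g_0$ of (\ref{bn}) descends, conformally, to the $4$-dimensional leaf space ${\mathcal J}/H^+$ of $H^+=\Span(D_x,D_y)$. Concretely I would model this leaf space by the local transversal $\{x=x_0,\ y=y_0\}$ coordinatized by $(z,p,q,s)$, on which the four horizontal forms restrict to $\om_1=\der q$, $\om_2=\der s$, $\om_3=\der z$, $\om_4=\der p$, so that
\[
\hat g=2(1-r't')\,\der s\,\der z+r'\,\der q^2-2\,\der q\,\der p+t'\,\der p^2
\]
is a nondegenerate $4$-metric whose coefficients depend on the single variable $s$. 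Since the conformal factor ${\rm e}^{2h}$, the rescaling implicit in the descent, and the choice of transversal all change $\hat g$ only within its conformal class, and since the curvature of the normal conformal Cartan connection is a conformal invariant, it is enough to compute that invariant for $\hat g$; this is also why an arbitrary $h=h(s)$ is allowed in the statement.

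Next I would build the connection and compute its curvature. The metric $\hat g$ is highly symmetric — it admits the three commuting Killing fields $\partial_z,\partial_p,\partial_q$ and depends on $s$ alone — so the Levi-Civita connection, the Schouten tensor, and the Weyl tensor are all explicitly computable in the coframe $(\om_1,\om_2,\om_3,\om_4)$, by hand with care or, as the authors note, with computer algebra. I would then assemble the normal conformal Cartan connection ${\mathcal A}$ as the $\oa(3,3)$-valued (split signature) or $\oa(2,4)$-valued (Lorentzian) form whose solder part is the horizontal coframe $(\om_1,\om_2,\om_3,\om_4)$, whose linear part is the Weyl connection $\Gamma_{ij}$ as in (\ref{sy2})--(\ref{sy4}), and whose lowest (special-conformal) part is fixed by the normality conditions, and compute ${\mathcal R}=\der{\mathcal A}+{\mathcal A}\dz{\mathcal A}$. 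The expected outcome is that the border (Cotton) blocks vanish, that all curvature sits in the central Weyl block, that it is carried entirely by the two $2$-forms $\om_2\dz\om_4$ and $\om_1\dz\om_2$, and that its coefficients are the two scalars $Z_1,Z_2$ as displayed. As a consistency check I would verify that $Z_1,Z_2$ are independent of $h$ and that $Z_1=-K_1/\big(4(1-r't')^2\big)$ and $Z_2=-K_2/\big(4(1-r't')^2\big)$, where $K_1,K_2$ are the contact invariants of the preceding theorem; this ties the formula to the earlier $(3,2,1)$ analysis.

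Finally I would extract the two remaining claims. Since in dimension four the normal conformal Cartan connection is flat exactly when the conformal class is conformally flat, and since the nonzero entries of ${\mathcal R}$ are invertible combinations of $Z_1$ and $Z_2$, we get ${\mathcal R}\equiv0$ if and only if $Z_1\equiv Z_2\equiv0$, which rearranges into the stated third-order ODEs $r^{(3)}=-3r''(r't')'/\big(2(1-r't')\big)$ and the analogue for $t$. For the Petrov type I would split the Weyl tensor into its self-dual and anti-self-dual parts $C^{+},C^{-}$, read off from ${\mathcal R}$; because both curvature $2$-forms contain the single null direction $\om_2=\der s$ (one member of the null pair $\om_2,\om_3$), each of $C^{+}$ and $C^{-}$ has a quadruple principal null direction, i.e. is of type $N$. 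In split signature $C^{\pm}$ are real and independent, giving type $N\oplus N'$; in Lorentzian signature they are complex conjugates, giving $N\oplus\bar N$.

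The main obstacle is the explicit construction of the normal conformal Cartan connection and the computation of its curvature: the authors stress in the remark following Theorem \ref{tnf} that for general $R,T$ even the Weyl tensor is intractable, and although the reduction $R=r(s)$, $T=t(s)$ makes everything finite, carrying out the normality normalization and compressing the result into the displayed matrix — while correctly isolating $Z_1$ and $Z_2$ and checking that the Cotton-type blocks drop out — is the heavy part. A secondary, more delicate point is the signature-sensitive bivector analysis needed to pin down the type $N\oplus N'$ versus $N\oplus\bar N$ conclusion.
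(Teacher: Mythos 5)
Your proposal is correct and takes essentially the same route as the paper: the paper's own ``proof'' is precisely the direct computation of the normal conformal Cartan connection and its curvature, described there as a straightforward but lengthy calculation done with Mathematica and omitted from the text. Your reduction to the transversal $\{x=x_0,\, y=y_0\}$ with the pp-wave--type metric in the coordinates $(z,p,q,s)$, the use of the three commuting Killing fields, and the consistency check $Z_i=-K_i/\big(4(1-r't')^2\big)$ are sound organizational refinements of that same computation, and your flatness and Petrov-type arguments agree with what the paper asserts.
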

The proof of this theorem consists in a straightforward, but lengthy
calculation, which we made using Mathematica. We omit it here. With
the use of Mathematica we also were able to check that the following
theorem is true:
\begin{theorem}\label{blu}
For every choice of sufficiently smooth functions $r=r(s)$ and
$t=t(s)$ there exists a function $h=h(s)$ such that the metric
(\ref{nmb}) is Ricci flat. The function $h$ in which the metric
$g={\rm e}^{2h}g_0$ is Ricci flat is a solution to the 2nd order ODE:
\begin{eqnarray*}
&h''=h'^2-\frac{(t'r')'}{1-r't'}h'+\frac{2\big(r^{(3)}t'+t^{(3)}r'\big)(1-r't')+2r''t''+4r't'r''t''+3t'^2r''^2+3r'^2t''^2}{8(1-r't')^2}.
\end{eqnarray*}
\end{theorem}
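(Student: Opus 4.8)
The plan is to reduce the statement to a curvature computation for an explicit four\-dimensional metric, and then to exhibit Ricci\-flatness as a single second\-order ODE for $h$. First I would pass from $\mathcal J$ to the four\-dimensional solution space ${\mathcal J}/H^+$ by restricting to a section transverse to $H^+=\Span(D_x,D_y)$, say $\{x=y=0\}$. Since $R=r(s)$ and $T=t(s)$ force $D_xR=D_yR=D_xT=D_yT=0$, the forms $\om_1,\om_2,\om_3,\om_4$ restrict there to $\der q,\der s,\der z,\der p$, and $g_0$ becomes the genuine representative
\[
g_0=2(1-r't')\,\der z\,\der s+t'\,\der p^2-2\,\der p\,\der q+r'\,\der q^2
\]
of the descended conformal class, in coordinates $(z,p,q,s)$, with every coefficient a function of $s$ alone.

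The structural observation that drives the proof is that $g_0$ is a pp\-wave (Brinkmann) metric. Its coefficients are independent of $z,p,q$, so $\partial_z,\partial_p,\partial_q$ are commuting Killing fields; moreover the one\-form $\eta=(1-r't')\,\der s$ dual to $\partial_z$ is parallel, $\nabla^{(0)}\eta=0$ (equivalently, the substitution $w=\int(1-r't')\,\der s$ brings $g_0$ to the Rosen form $2\,\der z\,\der w+\tilde h_{AB}(w)\,\der x^A\der x^B$). Consequently the Ricci tensor of $g_0$ has a single nonvanishing component: $\Ric^{(0)}_{ab}=0$ unless $a=b=s$. I would also record the two auxiliary facts that make the conformal step collapse: $\der s$ is null ($g_0^{ss}=0$), and among the Christoffel symbols $\Gamma^{(0)s}{}_{ab}$ only $\Gamma^{(0)s}{}_{ss}=-\tfrac{(r't')'}{1-r't'}$ is nonzero, while $g_0^{ab}\Gamma^{(0)s}{}_{ab}=-\Box_0 s=0$.

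Next I would insert $g=e^{2h}g_0$ into the standard conformal variation of the Ricci tensor in dimension four,
\[
\Ric(g)_{ab}=\Ric^{(0)}_{ab}-2\bigl(\nabla^{(0)}_a\nabla^{(0)}_b h-\partial_a h\,\partial_b h\bigr)-\bigl(\Box_0 h+2\,|\der h|^2_{g_0}\bigr)g_{0,ab}.
\]
Because $h=h(s)$ has null gradient, $|\der h|^2_{g_0}=0$ and $\Box_0 h=-g_0^{ab}\Gamma^{(0)s}{}_{ab}h'=0$, so the trace term drops; and since $\nabla^{(0)}_a\nabla^{(0)}_b h=h''\delta_a^s\delta_b^s-\Gamma^{(0)s}{}_{ab}h'$ with only $\Gamma^{(0)s}{}_{ss}\neq0$, the pp\-wave vanishing $\Ric^{(0)}_{ab}=0$ for $(a,b)\neq(s,s)$ makes \emph{every} off\-diagonal component of $\Ric(g)$ vanish automatically, independently of $h$. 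The one surviving equation is the $(s,s)$ component, $h''=h'^2+\Gamma^{(0)s}{}_{ss}h'+\tfrac12\Ric^{(0)}_{ss}$, that is
\[
h''=h'^2-\frac{(t'r')'}{1-r't'}\,h'+\tfrac12\Ric^{(0)}_{ss}.
\]
Existence for arbitrary smooth $r,t$ is then immediate: setting $u=h'$, this is the Riccati equation $u'=u^2-\tfrac{(t'r')'}{1-r't'}u+P(s)$ with $P$ smooth wherever $1-r't'\neq0$, which always admits local solutions; integrating $u$ produces $h$.

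The hard part is the explicit evaluation of the lone scalar $\Ric^{(0)}_{ss}$ for the pp\-wave $g_0$ and the verification of the identity
\[
\tfrac12\Ric^{(0)}_{ss}=\frac{2\bigl(r^{(3)}t'+t^{(3)}r'\bigr)(1-r't')+2r''t''+4r't'r''t''+3t'^2r''^2+3r'^2t''^2}{8(1-r't')^2},
\]
which turns the displayed equation into the one asserted in the theorem; this is precisely the lengthy Christoffel\-and\-curvature bookkeeping the authors delegate to Mathematica. Everything conceptual---reduction to a section, the parallel\-null\-vector structure forcing all but one Ricci component to vanish, and the null\-gradient collapse of the conformal correction---is short, so the entire weight of the proof sits in confirming that single curvature component.
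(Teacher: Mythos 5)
Your proof is correct, and it takes a genuinely different route from the paper's, which offers no argument at all beyond the remark that the claim ``was checked with Mathematica''. Your route: restrict to the transversal section $\{x=y=0\}$ (legitimate here, since with $R=r(s)$, $T=t(s)$ the rescaling functions $\al(D_x),\al(D_y)$ appearing in the proof of Theorem \ref{tnf} vanish, so $g_0$ honestly descends as a metric, not merely a conformal class); recognize $g_0=2(1-r't')\der z\,\der s+t'\der p^2-2\der p\,\der q+r'\der q^2$ as a Brinkmann/Rosen metric with parallel null vector $\partial_z$; conclude that $\Ric^{(0)}$ is concentrated in the $(s,s)$ slot; then observe that for $h=h(s)$ the null gradient kills $|\der h|^2_{g_0}$ and $\Box_0 h$, so the four-dimensional conformal transformation law collapses to the single scalar equation $h''=h'^2+\Gamma^s_{~ss}h'+\tfrac12\Ric^{(0)}_{ss}$, a Riccati equation whose local solvability (Picard--Lindel\"of) gives existence for arbitrary sufficiently smooth $r,t$. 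All your intermediate claims check out: $g_0^{ss}=0$; $\Gamma^s_{~ss}=-(r't')'/(1-r't')$ is indeed the only nonzero $\Gamma^s_{~ab}$; and the one computation you leave as bookkeeping does close: writing $F=1-r't'$, the plane-wave computation gives $\Ric^{(0)}_{ss}=\bigl(-2F''F+4F'^2-(r'^2t''^2+t'^2r''^2+2r''t'')\bigr)/(4F^2)$, and substituting $F'=-(r''t'+r't'')$, $F''=-(r'''t'+2r''t''+r't''')$ reproduces exactly the inhomogeneous term of the theorem's ODE, so the stated equation follows. As for what each approach buys: the paper's computer verification produces the formula but explains nothing, whereas your argument explains why Ricci-flatness amounts to one equation rather than ten (the pp-wave structure plus the nullity of $\der h$ make every other component of $\Ric(g)$ vanish identically), reduces the machine work to a single curvature scalar, makes the existence assertion transparent, and obtains as an ingredient the covariantly constant null direction $\partial_z$ that the paper only points out after the theorem.
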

Thus, among the type $(3,2,1)$ para-CR structures originating from
PDEs $z_{xx}=R$ $\&$ $z_{yy}=T$ we found conformally Ricci flat but
conformally non-flat metrics. It further follows that these metrics, 
in addition to being conformally Ricci flat and of type $N\oplus N'$,
have \emph{reduced holonomy}. This is because they 
have a \emph{covariantly constant null direction}, which is alligned
with the vector field $\partial_z$. In the Lorentzian case, i.e. when $1-r't'<0$, they are
known in General Relativity theory as $pp$-waves (see e.g. \cite{lew} 
for a definition and \cite{ln} for a discussion of their conformal properties). 

It would be very interesting to find type $(3,2,1)$ para-CR
structures defined by $z_{xx}=R$ $\&$ $z_{yy}=T$, which define
conformally Einstein metrics (\ref{mne1})-(\ref{mne2}) other than
$pp$-waves or their split signature counterparts discussed here.  
 

\begin{thebibliography}{99}
\bibitem{alex1} Alekseevsky D V, Medori C, Tomassini A (2008),
  ``Maximally homogeneous para-CR manifolds of semisimple type'',  to 
  appear in
  \emph{Handbook of Pseudo-Riemannian geometry and Supersymmetry},  
arXiv:0808.0431 
\bibitem{cartan} Cartan E (1910) ``Les systemes de Pfaff a cinq variables et
  les equations aux derivees partielles du seconde ordre''
  \emph{Ann. Sc. Norm. Sup.} {\bf 27} 109-192 
\bibitem{cartanode} Cartan E (1924) ``Varietes a connexion projective''
  {\it Bull. Soc. Math.} {\bf LII} 205-241
\bibitem{chern} Chern S S (1940) ``The geometry of the differential
  equations $y'''=F(x,y,y',y'')$'' {\it Sci. Rep. Nat. Tsing Hua
  Univ.} {\bf 4} 97-111
\bibitem{feff} Fefferman C, Graham C R (1985) ``Conformal invariants'', in
  \emph{Elie Cartan et mathematiques d'aujourd'hui}, Asterisque, hors
  serie (Societe Mathematique de France, Paris) 95-116  
\bibitem{et} Fritelli S, Kozameh C N, Newman E T (1995) ``GR via
  characteristic surfaces'' {\it J. Math. Phys.} {\bf 36} 4984-
\bibitem{nemo} Fritelli S, Newman E T, Nurowski P (2003)
  "Conformal Lorentzian metrics on the spaces of curves and
  2-surfaces" {\it Class. Q. Grav.} {\bf 20} 3649-3659
\bibitem{godphd} Godlinski M (2008) ``Geometry of Third-Order Ordinary
  Differential Equations and Its Applications in General Relativity''
  PhD Thesis, Warsaw University, arXiv: 0810.2234
\bibitem{nurgod1} Godlinski M, Nurowski P (2009) ``Geometry of
  third-order ODEs'', arXiv: 0902.4129 
\bibitem{gover} Gover A R, Nurowski P (2006)  ``Obstructions to conformally Einstein metrics in n dimensions'' {\it Journ. Geom. Phys.}  {\bf 56}  450-484 
\bibitem{lieodes} \textrm{Lie S (1924) ``Klassifikation und Integration von
gewohnlichen Differentialgleichungen zwischen $x$, $y$, die eine Gruppe von
Transformationen gestatten III'', in \textit{Gesammelte Abhandlungen,
  Vol. 5}, Teubner, Leipzig}
\bibitem{ln} Leistner Th, Nurowski P (2008) ``Ambient metrics for
  n-dimensional pp-waves'' arXiv:0810.2903
\bibitem{lew} Lewandowski J (1992) ``Reduced holonomy group and
  Einstein equations with a cosmological constant''
  \emph{Class. Q. Grav.} {\bf 9} L147-L151
\bibitem{nurdif} Nurowski P (2005) ``Differential equations and
  conformal structures'' {\it Journ. Geom. Phys.}  {\bf 55}  19-49 
\bibitem{davepaw} Nurowski P, Robinson D C (2000) ``Intrinsic geometry of a
  null hypersurface'' {\it Class. Q. Grav.} {\bf 17}, 4065-4084
\bibitem{nurspar} Nurowski P, Sparling G A J (2003) ``Three
  dimensional Cauchy-Riemann structures and second order ordinary
  differential equations'' {\it Class. Q. Grav.} {\bf 20}, 4995-5016
\bibitem{olver} Olver P J (1996) {\it Equivalence Invariants and
  Symmetry} (Cambridge University Press, Cambridge)
\bibitem{perk} Perkins K (2006) ``The Cartan-Weyl conformal geometry of a
  pair of second-order partial-differential equations'', PhD Thesis,
  Department of Physics $\&$ Astronomy, University of Pittsburgh
\bibitem{tanaka} Tanaka N (1985) ``On affine symmetric spaces and the
  automorphism groups of product manifolds, {\it Hokkaido Math. J.}
  {\bf 14}, 277-351
\bibitem{tresse} Tresse M A (1896) {\it 
Determinations des invariants ponctuels  de
l'equation differentielle ordinaire du second
ordre} $y''= \omega (x,y,y')$, Hirzel, Leipzig.
\bibitem{wun} W\"unschmann K (1905) ``{\it \"Uber Beruhrungsbedingungen
  bei Differentialgleichungen}'', Dissertation, Greifswald.
\end{thebibliography}
\end{document}